\numberwithin{equation}{section}
\newtheorem{Theorem}{Theorem}[section]
\newtheorem{Corollary}[Theorem]{Corollary}
\newtheorem{Lemma}[Theorem]{Lemma}
\newtheorem{Proposition}[Theorem]{Proposition}
\newtheorem{Algorithm}[Theorem]{Algorithm}
 { \theoremstyle{definition}
\newtheorem{Definition}[Theorem]{Definition}
\newtheorem{Example}[Theorem]{Example}}
\newcommand{\Z}{\mathbb{Z}}
\newcommand{\Q}{\mathbb{Q}}
\newcommand{\R}{\mathbb{R}}
\newcommand{\C}{\mathbb{C}}
\newcommand{\M}{\mathcal{M}}
\newcommand{\A}{\mathbb{A}}
\newcommand{\mO}{\mathcal{O}}
\newcommand{\eps}{\varepsilon}
\newcommand{\ph}{\varphi}
\newcommand{\pr}{\mathbb{P}}
\newcommand{\on}{\operatorname}
\newcommand{\ol}{\overline}
\newcommand{\ub}{\underbar}
\newcommand{\mfk}{\mathfrak}
\newcommand{\mat}{\left(\begin{array}}
\newcommand{\tam}{\end{array}\right)}
\newcommand{\hilb}{\text{\textnormal{-Hilb}}\,}
\newcommand{\mR}{\mathcal{R}}
\newcommand{\Hirz}{\text{\textnormal{Hirz}}}
\newcommand{\dP}{\text{\textnormal{dP}}}
\newcommand{\Gig}{\text{$G$-\textnormal{ig}}}
\begin{document}
\allowdisplaybreaks

\newcommand{\arXivNumber}{1908.05748}

\renewcommand{\PaperNumber}{106}

\FirstPageHeading

\ShortArticleName{Walls for~$G$-Hilb via Reid's Recipe}

\ArticleName{Walls for~$\boldsymbol{G}$-Hilb via Reid's Recipe}

\Author{Ben WORMLEIGHTON}

\AuthorNameForHeading{B.~Wormleighton}

\Address{Department of Mathematics and Statistics, Washington University in~St.~Louis,\\ MO 63130, USA}
\Email{\href{mailto:email@address}{benw@wustl.edu}}
\URLaddress{\url{https://sites.google.com/view/benw/}}

\ArticleDates{Received November 14, 2019, in~final form October 14, 2020; Published online October 24, 2020}

\Abstract{The three-dimensional McKay correspondence seeks to relate the geometry of crepant resolutions of Gorenstein $3$-fold quotient singularities $\mathbb{A}^3/G$ with the representation theory of the group $G$. The first crepant resolution studied in~depth was the $G$-Hilbert scheme $G\text{-Hilb}\,\mathbb{A}^3$, which is also a~moduli space of $\theta$-stable representations of the McKay quiver associated to $G$. As~the stability parameter $\theta$ varies, we obtain many other crepant resolutions. In~this paper we focus on the case where $G$ is abelian, and compute explicit inequalities for the chamber of the stability space defining $G\text{-Hilb}\,\mathbb{A}^3$ in~terms of a marking of exceptional subvarieties of $G\text{-Hilb}\,\mathbb{A}^3$ called Reid's recipe. We~further show which of these inequalities define walls. This procedure depends only on the combinatorics of the exceptional fibre and has applications to the birational geometry of other crepant resolutions.}

\Keywords{wall-crossing; McKay correspondence; Reid's recipe; quivers}

\Classification{14E16; 14M25; 16G20}

\section{Introduction} \label{sec:intro}

Let $G\subset\on{SL}_n(\C)$ be a~finite subgroup. When $n=2$ there is a~famous ADE classification of such subgroups that matches the classification of Du Val or modality zero singularities by taking a~subgroup $G$ to the quotient singularity $0\in\A^2/G$. This observation and the surrounding deep interactions of the geometry of $\A^2/G$ and its resolutions, and the representation theory of $G$ are known as the two-dimensional McKay correspondence~\cite{ArtinVerdier85,BatyrevDais96,DenefLoeser02,ItoNakajima00,ItoNakamura96,McKay81,Reid}. In~this case, the unique minimal or crepant resolution has a~modular interpretation as the $G$-Hilbert scheme $G\hilb\A^2$. The moduli space $G\hilb M$ for~$M$ a~variety and $G\subset\on{Aut}(M)$ a~finite subgroup parameterises $G$-clusters in~$M$: zero-dimensional $G$-invariant subschemes $M$ of $\A^2$ with $H^0(\mO_Z)\cong\C[G]$ as $G$-modules. This was generalised to three dimensions for finite abelian subgroups of $\on{SL}_3(\C)$ by Nakamura~\cite{Nakamura01} who showed that $G\hilb\A^3$ is a~crepant resolution of~$\A^3/G$ and then to all subgroups $G$ by the celebrated work of Bridgeland--King--Reid~\cite{BridgelandKingReid01}. They moreover established an equivalence of categories
\begin{gather} \label{eqn:BKR} 
D^b\big(G\hilb\A^3\big)\simeq D^b_G\big(\A^3\big),
\end{gather}
which also holds if $G\hilb\A^3$ is replaced by any projective crepant resolution of $\A^3/G$. Compare also the results of Bridgeland~\cite{Bridgeland02}.

Using the GIT approach of King~\cite{King94} to constructing moduli of quiver representations the \mbox{$G$-Hilbert} scheme can also be realised as a~moduli space of $\theta$-stable quiver representations $\M_\theta(Q,\ub{d})$, where $Q$ is the McKay quiver of $G$ and $\ub{d}=(d_i)$ is a~given dimension vector. In~this situation the stability parameter $\theta$ lives in~the stability space
\begin{gather*}
\Theta:=\Bigg\{\theta\in\on{Hom}_\Z\big(\Z^{Q_0},\Q\big)\colon \sum_{i\in Q_0} d_i\theta(i)=0\Bigg\},
\end{gather*}
where $Q_0$ is the set of vertices of $Q$. By definition, the vertices of the McKay quiver biject with the irreducible representations $\on{Irr}(G)$ of $G$ and so one can view $\Z^{Q_0}$ as the abelian group underlying the representation ring of $G$. As~$\theta$ varies, it is possible that one obtains many different crepant resolutions of $\A^3/G$; in~the case that $G$ is abelian, Craw--Ishii~\cite{CrawIshii04} show that all projective crepant resolutions arise in~this way. The stability space $\Theta$ has a~wall-and-chamber structure such that the moduli space $\M_\theta(Q,\ub{d})$ is constant so long as $\theta$ remains inside a~given chamber. We~denote the moduli space $\M_\mfk{C}:=\M_\theta(Q,\ub{d})$ for any generic $\theta$ in~a~chamber $\mfk{C}$. Denote the chamber corresponding to $G\hilb\A^3$ by $\mfk{C}_0$. The positive orthant
\begin{gather*}
\Theta^+:=\big\{\theta\in\Theta\colon \theta(\rho)>0\text{ for all nontrivial $\rho\in\on{Irr}(G)$}\big\}
\end{gather*}
lies inside $\mfk{C}_0$ however it is not usually equal to it. The primary purpose of this paper is to provide explicit combinatorial inequalities defining $\mfk{C}_0$ and identify precisely which of these define walls of~$\mfk{C}_0$. We~remark that such equations were computed for a~group of order $11$ in \cite[Example~9.6]{CrawIshii04}.

Assume that $G$ is abelian. In~this context \cite[Theorem~9.5]{CrawIshii04} gives an abstract description of such inequalities, however it is difficult to perform explicit calculations or deduce general statements from their presentation. One can view some of the results herein as a~combinatorialisation of \cite[Theorem~9.5]{CrawIshii04}, which turn out to be very amenable to applications.

We briefly outline the context and notation of \cite{CrawIshii04} that we will also use. For a~chamber $\mfk{C}\subset\Theta$ the equivalence from (\ref{eqn:BKR}) induces an isomorphism $\ph_\mfk{C}\colon K_0(\M_\mfk{C})\to K_G\big(\A^3\big)=\on{Rep}(G)$. Here $K_0(\M_\mfk{C})$ denotes the $K$-group of sheaves supported on the preimage of the $G$-orbit for the origin under the resolution $\M_\mfk{C}\to\A^3/G$. Walls in~$\Theta$ are cut out by hyperplanes $\big(\sum_i\alpha_i\cdot\theta(\chi_i)=0\big)$ for some characters $\chi_i\in\on{Irr}(G)$ and integers $\alpha_i\in\Z$, though in~general not all $\theta$ on such a~hyperplane will be non-generic. The inequalities in~\cite{CrawIshii04} have three different forms, each coming from exceptional subvarieties. Firstly, each exceptional curve $C\subset G\hilb\A^3$ gives an inequality of the form
\begin{gather*}
\theta\big(\ph_{\mfk{C}_0}(\mO_C)\big)>0.
\end{gather*}
The characters appearing in~these inequalities are packaged in~collections of monomials associated to exceptional curves that were named by Nakamura~\cite{Nakamura01} in~a~different context as $G$\textit{-igsaw pieces}. Our first result is to pin down which characters lie in~$G$-igsaw pieces. In~general there are several $G$-igsaw pieces corresponding to a~single curve $C$; the union of all the pieces that do not include the trivial character is the set of characters that appear in~the inequality for~$C$. We~call this union the \emph{total $G$-igsaw piece}.

As $G$ is abelian the singularity $\A^3/G$ and its crepant resolutions are toric. There is a~method of marking the exceptional subvarieties of $G$-Hilb~-- the edges and vertices in~the triangulation~-- by characters of $G$ known as ``Reid's recipe''. This was used to explicitly describe the McKay correspondence in~the classical terms of providing a~basis of $H^*\big(G\hilb\A^3,\Z\big)$ indexed by characters by Craw~\cite{Craw05}. It~was later categorified by Logvinenko~\cite{Logvinenko08}, Cautis--Logvinenko~\cite{CautisLogvinenko09}, and Cautis--Craw--Logvinenko~\cite{CautisCrawLogvinenko17}, who expressed the locus labelled by a~character $\chi$ in~terms of the support of an object associated to $\chi$ in~the derived category of $G$-Hilb. We~will discuss this in~more detail in~Sections~\ref{sec:rr} and \ref{sec:taut}.

\begin{Theorem}[Algorithm \ref{alg:up}] \label{thm:pri} There is a~combinatorial procedure that we call the unlocking procedure for computing the characters appearing in~the total $G$-igsaw piece for an exceptional curve in~$G\hilb\A^3$. The input of this procedure is the data of Reid's recipe and the combinatorics of the exceptional fibre.
\end{Theorem}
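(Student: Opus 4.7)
The plan is to prove Theorem \ref{thm:pri} by explicitly constructing the unlocking procedure and checking that its output agrees with the total $G$-igsaw piece attached to a given exceptional curve. By construction, the characters in the total $G$-igsaw piece of an exceptional curve $C \subset G\hilb\A^3$ are precisely the nontrivial characters appearing in the decomposition $\ph_{\mfk{C}_0}(\mO_C) \in \on{Rep}(G)$. The strategy is to compute this class not representation-theoretically but through the support data that Reid's recipe encodes on the toric exceptional fibre.

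First I would recall the categorified Reid's recipe of Logvinenko \cite{Logvinenko08} and Cautis--Craw--Logvinenko \cite{CautisCrawLogvinenko17}, which identifies the locus marked by a character $\chi$ with the support of a specific complex on $G\hilb\A^3$ built from tautological bundles. Combined with the derived equivalence (\ref{eqn:BKR}), this translates the multiplicity of $\chi$ in $\ph_{\mfk{C}_0}(\mO_C)$ into a local condition: whether $C$ sits inside, or meets in the prescribed way, the union of exceptional subvarieties that Reid's recipe marks by $\chi$. In particular, every character that directly marks $C$ (or one of its endpoints, in the appropriate sense) must lie in the total $G$-igsaw piece; these form the seed data of the algorithm.

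Second I would set up the unlocking procedure itself. Starting from the seed characters attached to $C$ and to the two compact divisors meeting $C$, I would iteratively add a character $\chi$ marking some other exceptional subvariety $E$ whenever the local toric combinatorics of the triangulation force the support of the object categorifying $\chi$ to contain $C$. Concretely, this amounts to a propagation rule that traces character labels along chains of divisors and across vertex markings, each step corresponding to a Koszul-type relation among tautological bundles across a torus-invariant curve. Termination is automatic because the exceptional fibre has finitely many marked subvarieties, and correctness would follow by induction: at each unlocking step, I must verify that the inclusion of $C$ in the augmented support locus is a consequence of the inclusion established at the previous step together with the local combinatorics.

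The main obstacle will be formulating the propagation rules so that they simultaneously (i) detect every character that genuinely appears, without overcounting characters whose marked locus meets $C$ only transversely at a point, and (ii) remain purely combinatorial, depending only on Reid's recipe and the fan structure rather than on further derived-category input. This is especially delicate at vertices of the triangulation, where several divisors, curves, and characters interact and where a naive ``one-pass'' extraction of characters from Reid's recipe fails; iterative unlocking is needed precisely because certain characters become visible only after earlier ones have been accounted for. Once these rules are pinned down in Sections~\ref{sec:rr} and~\ref{sec:taut}, the theorem reduces to a case analysis over the possible local configurations of marked edges and vertices incident to $C$.
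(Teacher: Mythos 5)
Your proposal has a genuine gap on two fronts. First, the route through the categorified Reid's recipe is problematic: the objects constructed by Logvinenko and Cautis--Craw--Logvinenko compute the \emph{inverse} equivalence to the $\ph_{\mfk{C}_0}$ used here (the paper flags exactly this point in Section~\ref{sec:taut}), and knowing that the support of the complex attached to $\chi$ contains $C$ is not the same as knowing $\deg(\mR_\chi|_C)>0$, which is the actual membership criterion for $\Gig(C)$. The paper's working criterion is entirely elementary and avoids derived-category input: $\rho\in\Gig(C)$ exactly when the eigenmonomial $r_\chi$ of the character marking $C$ divides $r_\rho$ on a triangle adjacent to $C$, and these divisibility relations are read off from the multiplicative relations between tautological bundles in Theorem~\ref{thm:craw} (Craw's Theorem~6.1), applied locally around each Hirzebruch and del Pezzo vertex of the $\chi$-chain.

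Second, and more seriously, you have no mechanism for \emph{completeness}. Your inductive ``correctness'' step only shows that each character added by a propagation rule genuinely lies in $\Gig(C)$ (soundness); it does not show that every character of $\Gig(C)$ is eventually added. The paper closes this by a counting argument: Nakamura's Unique Valley Lemma together with Craw--Ishii's Lemma~9.1 (Lemma~\ref{lem:ci}) identify the socle elements of a torus-fixed $G$-cluster divisible by $r_\chi$ with the characters marking the neighbouring compact divisors, which pins down the extremal monomials of the total $G$-igsaw piece and hence its exact size; that size is then matched, case by case over the curve types (\texttt{Ix}, \texttt{Iy}, \texttt{Iz}, \texttt{Ic}, boundary), against the number of characters the unlocking procedure produces. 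Without such a count, or some other saturation argument, you cannot conclude. Finally, note that the substantive content of the theorem is the precise propagation rules themselves --- which of the two characters on a del Pezzo divisor enters $\Gig(C)$ (the character $\chi_{\text{dP}}(C,D)$), and which curves at a Hirzebruch divisor are ``downstream'' and get recursively unlocked; your plan defers exactly this to ``pinning down the rules,'' so the hard part of the proof is not yet addressed.
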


To briefly illustrate how the procedure works, we consider the example of $G=\frac{1}{30}(25,2,3)$. This notation means that $G$ is the subgroup of $\on{SL}_3(\C)$ generated by
\begin{gather*}
g=\begin{pmatrix}
\eps^{25} \\
& \eps^2 \\
& & \eps^3
\end{pmatrix},
\end{gather*}
where $\eps$ is a~primitive $30$th root of unity. Crepant resolutions correspond to triangulations of the simplex at height $1$~-- the ``junior simplex''~-- with vertices in~the lattice $\Z^3+\Z\cdot\big(\frac{25}{30},\frac{2}{30},\frac{3}{30}\big)$. The~triangulation for~$G$-Hilb is shown in~Fig.~\ref{fig:30t} along with Reid's recipe. We~often denote the character $\chi_a$ defined by $\chi_a(g)=\eps^a$ by the integer $a$.

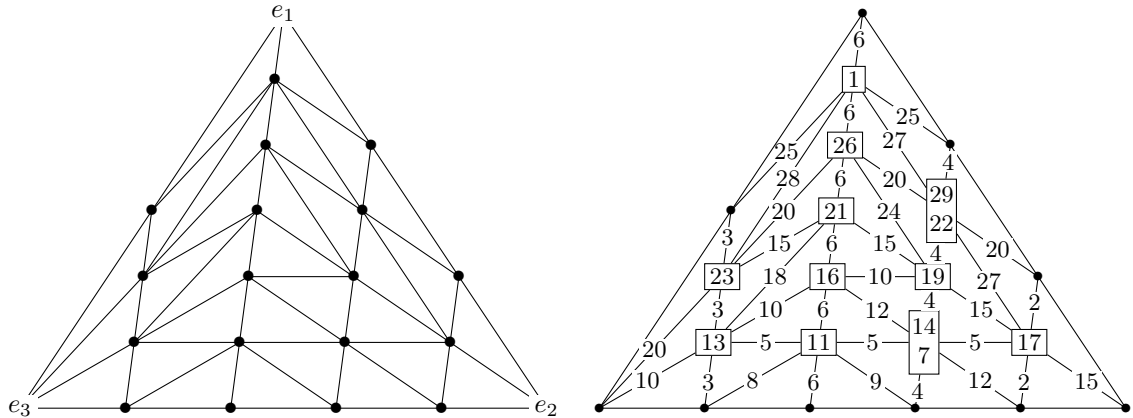
\begin{figure}[h]\centering
\begin{tikzpicture}[scale=0.35] \label{fig:30t}
\node (e1) at (0,9){};
\node (e2) at (10,-6){};
\node (e3) at (-10,-6){};
\node (a1) at (-2/6,9-5/2){$\bullet$};
\node (a2) at (-4/6,9-10/2){$\bullet$};
\node (a3) at (-6/6,9-15/2){$\bullet$};
\node (a4) at (-8/6,9-20/2){$\bullet$};
\node (a5) at (-10/6,9-25/2){$\bullet$};

\draw(e3.center) to (a2.center);
\draw(e3.center) to (a4.center);
\draw(e2.center) to (a3.center);

\node (xy1) at (10/3,4){$\bullet$};
\node (xy2) at (20/3,-1){$\bullet$};

\node (xz1) at (-5,1.5){$\bullet$};

\node (yz1) at (-10+4,-6){$\bullet$};
\node (yz2) at (-10+8,-6){$\bullet$};
\node (yz3) at (-10+12,-6){$\bullet$};
\node (yz4) at (-10+16,-6){$\bullet$};

\draw (e1.center) to (yz2.center);

\node (iz1) at (-16/3,-1){$\bullet$};
\node (iz2) at (-17/3,-3.5){$\bullet$};

\draw (xz1.center) to (yz1.center);
\draw (xz1.center) to (a1.center);
\draw (iz1.center) to (a1.center);

\draw (iz1.center) to (a3.center);
\draw (iz2.center) to (a3.center);

\draw (iz2.center) to (a5.center);
\draw (yz1.center) to (a5.center);

\node (c1) at (-1+11/3,-1){$\bullet$};
\node (c2) at (-1+22/3,-3.5){$\bullet$};

\node (dp1) at (9/3,1.5){$\bullet$};
\node (dp2) at (7/3,-3.5){$\bullet$};

\draw (xy2.center) to (yz4.center);
\draw (xy1.center) to (yz3.center);
\draw (a5.center) to (c2.center);
\draw (a5.center) to (yz3.center);
\draw (a4.center) to (yz4.center);
\draw (a4.center) to (c1.center);
\draw (a2.center) to (xy2.center);
\draw (a1.center) to (xy1.center);
\draw (a1.center) to (c2.center);
\draw (a2.center) to (c1.center);

\draw (e1.center) to (e2.center) to (e3.center) to (e1.center);

\small
\node[fill=white,inner sep=2pt] (e1) at (0,9){$e_1$};
\node[fill=white,inner sep=2pt] (e2) at (10,-6){$e_2$};
\node[fill=white,inner sep=2pt] (e3) at (-10,-6){$e_3$};

\footnotesize
\node (e1) at (0+22,9){$\bullet$};
\node (e2) at (10+22,-6){$\bullet$};
\node (e3) at (-10+22,-6){$\bullet$};

\node[draw, fill=white,inner sep=2pt] (a1) at (-2/6+22,9-5/2){$1$};
\node[draw, fill=white,inner sep=2pt] (a2) at (-4/6+22,9-10/2){$26$};
\node[draw, fill=white,inner sep=2pt] (a3) at (-6/6+22,9-15/2){$21$};
\node[draw, fill=white,inner sep=2pt] (a4) at (-8/6+22,9-20/2){$16$};
\node[draw, fill=white,inner sep=2pt] (a5) at (-10/6+22,9-25/2){$11$};

\node (xy1) at (10/3+22,4){$\bullet$};
\node (xy2) at (20/3+22,-1){$\bullet$};

\node (xz1) at (-5+22,1.5){$\bullet$};

\node (yz1) at (-10+4+22,-6){$\bullet$};
\node (yz2) at (-10+8+22,-6){$\bullet$};
\node (yz3) at (-10+12+22,-6){$\bullet$};
\node (yz4) at (-10+16+22,-6){$\bullet$};

\draw (e1.center) to node[fill=white,inner sep=1pt] {$6$} (a1) to node[fill=white,inner sep=1pt] {$6$} (a2) to node[fill=white,inner sep=1pt] {$6$} (a3) to node[fill=white,inner sep=1pt] {$6$} (a4) to node[fill=white,inner sep=1pt] {$6$} (a5) to node[fill=white,inner sep=1pt] {$6$} (yz2.center);

\node[draw, fill=white,inner sep=2pt] (iz1) at (-16/3+22,-1){$23$};
\node[draw, fill=white,inner sep=2pt] (iz2) at (-17/3+22,-3.5){$13$};

\draw (xz1.center) to node[fill=white,inner sep=1pt] {$3$} (iz1) to node[fill=white,inner sep=1pt] {$3$} (iz2) to node[fill=white,inner sep=1pt] {$3$} (yz1.center);
\draw (xz1.center) to node[fill=white,inner sep=1pt] {$25$} (a1);
\draw (iz1) to node[fill=white,inner sep=1pt] {$28$} (a1);

\draw (iz1) to node[fill=white,inner sep=1pt] {$15$} (a3);
\draw (iz2) to node[fill=white,inner sep=1pt] {$18$} (a3);

\draw (iz2) to node[fill=white,inner sep=1pt] {$5$} (a5);
\draw (yz1.center) to node[fill=white,inner sep=1pt] {$8$} (a5);
\draw (e1.center) to (e2.center) to (e3.center) to (e1.center);

\node[draw, fill=white,inner sep=2pt] (c1) at (-1+22+11/3,-1){$19$};
\node[draw, fill=white,inner sep=2pt] (c2) at (-1+22+22/3,-3.5){$17$};

\node[draw, fill=white,inner sep=1pt] (dp1) at (9/3+22,1.5){$\begin{matrix} 29 \\ 22\end{matrix}$};
\node[draw, fill=white,inner sep=1pt] (dp2) at (7/3+22,-3.5){$\begin{matrix} 14 \\ 7\end{matrix}$};

\draw(e3.center) to node[fill=white,inner sep=1pt] {$20$} (iz1) to node[fill=white,inner sep=1pt] {$20$} (a2);
\draw(e3.center) to node[fill=white,inner sep=1pt] {$10$} (iz2) to node[fill=white,inner sep=1pt] {$10$} (a4);
\draw(e2.center) to node[fill=white,inner sep=1pt] {$15$} (c2) to node[fill=white,inner sep=1pt] {$15$} (c1) to node[fill=white,inner sep=1pt] {$15$} (a3);

\draw (xy2.center) to node[fill=white,inner sep=1pt] {$2$} (c2) to node[fill=white,inner sep=1pt] {$2$} (yz4.center);
\draw (xy1.center) to node[fill=white,inner sep=1pt] {$4$} (dp1) to node[fill=white,inner sep=1pt] {$4$} (c1) to node[fill=white,inner sep=1pt] {$4$} (dp2) to node[fill=white,inner sep=1pt] {$4$} (yz3.center);
\draw (a5) to node[fill=white,inner sep=1pt] {$5$} (dp2) to node[fill=white,inner sep=1pt] {$5$} (c2);
\draw (a5) to node[fill=white,inner sep=1pt] {$9$} (yz3.center);
\draw (a4) to node[fill=white,inner sep=1pt] {$12$} (dp2) to node[fill=white,inner sep=1pt] {$12$} (yz4.center);
\draw (a4) to node[fill=white,inner sep=1pt] {$10$} (c1);
\draw (a2) to node[fill=white,inner sep=1pt] {$20$} (dp1) to node[fill=white,inner sep=1pt] {$20$} (xy2.center);
\draw (a1) to node[fill=white,inner sep=1pt] {$25$} (xy1.center);
\draw (a1) to node[fill=white,inner sep=1pt] {$27$} (dp1) to node[fill=white,inner sep=1pt] {$27$} (c2);
\draw (a2) to node[fill=white,inner sep=1pt] {$24$} (c1);

\end{tikzpicture}
\caption{$G$-Hilb and Reid's recipe for~$G=\frac{1}{30}(25,2,3)$.}
\end{figure}

Throughout this paper we use the convention of ordering vertices as shown in~Fig.~\ref{fig:30t}. We~will demonstrate the unlocking procedure for the curve $C$ shown on the left side of Fig.~\ref{fig:unlock_ex} marked with the character $5$; that is, the character taking $g\mapsto\eps^5$. On the right side of Fig.~\ref{fig:unlock_ex} we~illustrate the unlocking procedure. Roughly, we consider all the curves (or~edges) marked with~$5$, add one character marking each divisor containing two such curves (or~vertices between two edges marked with $5$), and finally add the characters appearing in~$G$-igsaw pieces for certain curves cohabiting a~divisor with a~curve marked with $5$. In~this case, the only such extra curve is marked with $9$ and the $G$-igsaw piece for this curve consists just of the character $9$ itself. Some recursion will be necessary in~general to compute the smaller $G$-igsaw pieces of such curves. It~follows that the total $G$-igsaw piece for~$C$ has characters 5, 7, 9, 11. Observe that this total $G$-igsaw piece only picked one of the two characters 7, 14 marking a~divisor containing two $5$-curves. We~will elaborate in~Section~\ref{sec:comb_def} how the unlocking procedure identifies which of the two characters should be added.

Following~\cite{Wilson92}, walls inside $\Theta$ are of various types denoted \texttt{0}-\texttt{III} depending on the birational geometry of the moduli spaces near the wall. Walls of Type \texttt{I} correspond to flops induced by curves. By \cite[Theorem~9.12]{CrawIshii04}, every flop in~a~single exceptional $(-1,-1)$-curve can be realised by a~wall-crossing of Type \texttt{I} directly from $\mfk{C}_0$, which is very much not true for other resolutions; see \cite[Example~9.13]{CrawIshii04}. Walls of Type \texttt{III} -- that contract a~divisor to a~curve~-- arise from exceptional $(-2,0)$-curves corresponding to certain ``boundary'' edges in~the triangulation for~$G$-Hilb. We~will define this terminology in~Section~\ref{sec:rr}. The final possibility is that an exceptional curve is a~$(1,-3)$-curve that, if the corresponding inequality was irredundant, would produce a~wall of Type~\texttt{II} where by definition a~divisor is contracted to a~point. \cite[Proposition~3.8]{CrawIshii04} shows that there are in~fact no Type \texttt{II} walls in~$\Theta$.

We denote the set of characters appearing in~the total $G$-igsaw piece for an exceptional curve $C$ by $\Gig(C)$. The following result is implied by \cite[Corollary~5.2, Proposition~9.7 and Theorem~9.12]{CrawIshii04} and Theorem~\ref{thm:pri}.

\begin{figure}[t]\centering
\begin{tikzpicture}[scale=0.2]
\footnotesize
\node (e1) at (0,9){$\bullet$};
\node (e2) at (10,-6){$\bullet$};
\node (e3) at (-10,-6){$\bullet$};

\draw (e1.center) to (e2.center) to (e3.center) to (e1.center);

\node (a5) at (-10/6,9-25/2){$\bullet$};
\node (iz2) at (-17/3,-3.5){$\bullet$};

\draw (iz2.center) to node[fill=white,inner sep=1pt] {$\mathbf{5}$} (a5.center);

\node (e1) at (0+22,9){$\bullet$};
\node (e2) at (10+22,-6){$\bullet$};
\node (e3) at (-10+22,-6){$\bullet$};

\draw (e1.center) to (e2.center) to (e3.center) to (e1.center);

\node[draw, fill=white,inner sep=2pt] (a5) at (-10/6+22,9-25/2){$11$};
\node (yz3) at (-10+12+22,-6){$\bullet$};
\node (iz2) at (-17/3+22,-3.5){$\bullet$};
\node (c2) at (-1+22/3+22,-3.5){$\bullet$};
\node[draw, fill=white,inner sep=2pt] (dp2) at (7/3+22,-3.5){$7$};

\draw (iz2.center) to node[fill=white,inner sep=1pt] {$\mathbf{5}$} (a5);
\draw (a5) to node[fill=white,inner sep=1pt] {$5$} (dp2) to node[fill=white,inner sep=1pt] {$5$} (c2.center);
\draw (a5) to node[fill=white,inner sep=1pt] {$9$} (yz3.center);
\end{tikzpicture}
\caption{Unlocking for a~$5$-curve.}\label{fig:unlock_ex}
\end{figure}
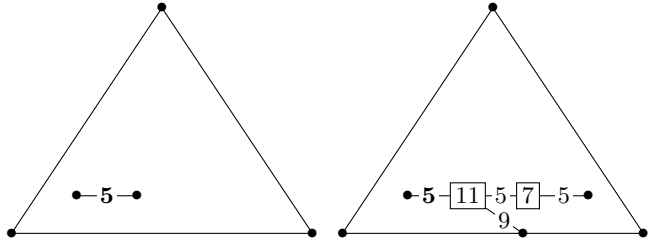

\begin{Proposition}[Propositions~\ref{prop:i} and \ref{prop:II}] \label{prop:curves} Suppose $C\subset G\hilb\mathbb{A}^3$ is an exceptional curve marked with character $\chi$ by Reid's recipe. The inequality corresponding to $C$ is given by
\begin{gather*}
\theta(\ph_{\mfk{C}_0}(\mO_C))=\sum_{\chi\in\Gig(C)}\deg (\mR_\chi|_C)\theta(\chi)>0.
\end{gather*}
If $C$ is a~$(-1,-1)$-curve then the necessary inequality corresponding to $C$ that defines a~Type \texttt{I} wall of $\mfk{C}_0$ is given by
\begin{gather*}
\theta\big(\ph_{\mfk{C}_0}(\mO_C)\big)=\sum_{\chi\in\Gig(C)}\theta(\chi)>0.
\end{gather*}
If $C$ is a~$(1,-3)$-curve then the inequality corresponding to $C$ is given by
\begin{gather*}
\theta\big(\ph_{\mfk{C}_0}(\mO_C)\big)=2\cdot\theta\big(\chi^{\otimes 2}\big)+\sum_{\chi\in\Gig(C)\setminus\{\chi^{\otimes 2}\}}\theta(\chi)=0.
\end{gather*}
In all three cases $\Gig(C)$ is computed by the unlocking procedure.
\end{Proposition}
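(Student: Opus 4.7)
The plan is to derive each of the three formulas by combining the $K$-theoretic expression for $\ph_{\mfk{C}_0}(\mO_C)$ given in \cite{CrawIshii04} with the combinatorial identification of the supporting characters provided by Theorem~\ref{thm:pri}. By \cite[Corollary~5.2]{CrawIshii04}, for any exceptional curve $C\subset G\hilb\A^3$ one has the identity
\begin{gather*}
\ph_{\mfk{C}_0}(\mO_C)=\sum_\chi \deg(\mR_\chi|_C)\cdot\chi
\end{gather*}
in $\on{Rep}(G)$, where $\mR_\chi$ denotes the tautological bundle attached to $\chi$. Evaluating against $\theta$ converts this into a linear form on $\Theta$. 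To prove the first part I would invoke \cite[Proposition~9.7]{CrawIshii04} together with Theorem~\ref{thm:pri} to pin down the support of this sum: the characters $\chi$ with $\deg(\mR_\chi|_C)>0$ are exactly those appearing in some $G$-igsaw piece for $C$, that is, the elements of $\Gig(C)$. Restricting the sum to $\Gig(C)$ yields the stated equality, and positivity reflects the effectiveness of the class $\mO_C$ inside the chamber $\mfk{C}_0$.

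For the $(-1,-1)$-curve case I would refine the degree computation using \cite[Theorem~9.12]{CrawIshii04}, which asserts that the wall-crossing from $\mfk{C}_0$ through such a curve realises its simple flop. Locally this forces each $\mR_\chi|_C$ to be a line bundle of degree $0$ or $1$, so every character in $\Gig(C)$ contributes with coefficient exactly $1$, producing the Type~\texttt{I} inequality. For the $(1,-3)$-curve case I would use the analogous toric local model: the normal bundle $\mO(1)\oplus\mO(-3)$ forces a single distinguished character $\chi^{\otimes 2}$ to satisfy $\deg(\mR_{\chi^{\otimes 2}}|_C)=2$, while every other contributing character gives degree $1$. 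Combined with Theorem~\ref{thm:pri} this produces the stated expression, and the equality to $0$ reflects the fact, established in \cite[Proposition~3.8]{CrawIshii04}, that there are no Type~\texttt{II} walls in $\Theta$: as a linear form on $\Theta$ the quantity $\theta(\ph_{\mfk{C}_0}(\mO_C))$ vanishes identically, so the associated hyperplane fails to bound $\mfk{C}_0$ in codimension one.

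The principal bookkeeping obstacle is justifying the identification of the support of $\deg(\mR_\bullet|_C)$ with the output of the unlocking procedure uniformly across all three cases, rather than wall-by-wall. Once Theorem~\ref{thm:pri} is in place this becomes a translation of the Craw--Ishii description of $\mO_C$ in $K$-theory into the Reid's-recipe language of this paper, and the degree formulas for the $(-1,-1)$ and $(1,-3)$ cases then drop out of the standard toric models for the normal bundle of the corresponding exceptional curve.
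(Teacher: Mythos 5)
Your handling of the general formula and of the $(-1,-1)$ case is essentially the paper's: the identity $\ph_{\mfk{C}_0}(\mO_C)=\sum_\rho\deg(\mR_\rho|_C)\cdot\rho$ from \cite[Corollary~5.2]{CrawIshii04}, the observation that $\mR_\rho|_C$ is trivial for $\rho\notin\Gig(C)$, and, for $(-1,-1)$-curves, necessity via \cite[Theorem~9.12]{CrawIshii04} together with $\deg(\mR_\rho|_C)=1$ on $\Gig(C)$, which the paper takes from \cite[Corollary~6.3]{CrawIshii04} rather than from any local statement about flops. The $(1,-3)$ case, however, contains a genuine gap: you assert that the normal bundle $\mO(1)\oplus\mO(-3)$ ``forces'' $\deg(\mR_{\chi^{\otimes 2}}|_C)=2$ and degree $1$ for every other contributing character, but no argument is given, and the normal bundle of $C$ does not by itself determine the degrees of the tautological bundles. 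The paper's actual proof is monomial: such a $C$ lies in the exceptional $\pr^2$ at a meeting of champions of side length $0$, that $\pr^2$ is marked with $\chi^{\otimes 2}$, Theorem~\ref{thm:craw} gives $r_{\chi^{\otimes 2}}=r_\chi^2$, and the degrees are read off from $\deg(\mR_\rho|_C)=\min\big\{k\colon r_\chi^k\,|\,r_\rho\big\}$, so that exactly one character acquires coefficient $2$.

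The more serious error is your justification of the terminal ``$=0$''. That symbol is a misprint in the statement (the body version, Proposition~\ref{prop:II}, reads ``$>0$''), but the reading you propose --- that $\theta(\ph_{\mfk{C}_0}(\mO_C))$ vanishes identically as a linear form on $\Theta$ --- is false. The only linear forms vanishing on all of $\Theta$ are multiples of $\sum_{\rho\in\on{Irr}(G)}\theta(\rho)$, whereas this form is supported on $\Gig(C)$, which excludes the trivial character, and carries a coefficient equal to $2$; on $\Theta^+\subset\mfk{C}_0$ it is strictly positive. The absence of Type \texttt{II} walls (Corollary~\ref{cor:ii}) is instead established by showing that the inequality is \emph{redundant}: it is the sum of the quotient inequality of Proposition~\ref{prop:rigid} for the exceptional $\pr^2$ and the subsheaf inequality $\theta\big(\chi^{\otimes 2}\big)>0$, so the hyperplane it defines does not support a facet of $\ol{\mfk{C}}_0$. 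Your final paragraph therefore needs to be replaced by this redundancy argument rather than an identical-vanishing claim.
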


We can also use the unlocking procedure to compute inequalities that do not come from exceptional curves. The other two kinds of inequality come from exceptional divisors. For each character $\psi$ marking a~divisor, we obtain an inequality $\theta(\psi)>0$. The second kind of inequality coming from divisors is more complicated.

\begin{Proposition}[Proposition~\ref{prop:rigid}] \label{prop:rigid1} Suppose $D'$ is a~(not necessarily prime) exceptional divisor in~$G\hilb\A^3$. Then any $\theta\in\mfk{C}_0$ satisfies
\begin{gather*}
\theta\big(\ph_\mfk{C_0}(\omega_{D'}^\vee)\big)=\sum_{C\subset D'}\sum_{\chi\in\Gig(C)}\theta(\chi)>0,
\end{gather*}
where $C$ ranges over exceptional curves inside $D'$.
\end{Proposition}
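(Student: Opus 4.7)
The strategy is to express $[\omega_{D'}^\vee]$ in $K$-theory as a combination of structure sheaves of the exceptional curves $C\subset D'$ plus a $[\mO_{D'}]$ correction, and then invoke Proposition \ref{prop:curves} termwise. Since $\ph_{\mfk{C}_0}$ is additive and $\theta\in\Theta$ annihilates the regular representation, zero-dimensional corrections and line bundle twists on rational curves may be ignored throughout.

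Assume first that $D'=D_v$ is prime, so that $D'$ is a smooth projective toric surface whose boundary cycle $B=\sum_i C_i$ is a cycle of $\pr^1$'s, and the toric anticanonical formula gives $\omega_{D'}^\vee\cong\mO_{D'}(B)$. Combining the short exact sequence
\begin{gather*}
0\to\mO_{D'}\to\mO_{D'}(B)\to\mO_B(B)\to 0
\end{gather*}
with the normalization sequence of $B$ twisted by $\mO_{D'}(B)$, I obtain in the $K$-group of coherent sheaves on $X$ supported in the exceptional locus
\begin{gather*}
[\omega_{D'}^\vee]=[\mO_{D'}]+\sum_i[\mO_{C_i}(B|_{C_i})]-\sum_j[\mO_{p_j}],
\end{gather*}
where the $p_j$ are the nodes of $B$. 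Two simplifications then apply: $\theta(\ph_{\mfk{C}_0}(\mO_p))=\theta(\rho_\mathrm{reg})=0$ for any point $p$, since the dimension vector is identically $1$ for abelian $G$; and $[\mO_C(d)]-[\mO_C]$ is an integer multiple of a point class for any smooth exceptional rational $C$, so $\theta\circ\ph_{\mfk{C}_0}$ is insensitive to line bundle twists on curves. Together with Proposition \ref{prop:curves}, this reduces the claim to
\begin{gather*}
\theta(\ph_{\mfk{C}_0}(\omega_{D'}^\vee))=\theta(\ph_{\mfk{C}_0}(\mO_{D'}))+\sum_{C\subset D'}\sum_{\chi\in\Gig(C)}\deg(\mR_\chi|_C)\theta(\chi).
\end{gather*}

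The main obstacle is then to identify $\ph_{\mfk{C}_0}(\mO_{D'})$ explicitly and to verify that its $\theta$-evaluation supplies exactly the correction needed to convert the degree-weighted sum above into the unweighted sum in the proposition. For $(-1,-1)$-curves inside $D'$ the degrees $\deg(\mR_\chi|_C)$ are already $1$ on $\Gig(C)$ and no correction is needed; the subtlety arises at $(1,-3)$-curves (where one character appears with coefficient $2$) and at $(-2,0)$-boundary curves. I would compute $\ph_{\mfk{C}_0}(\mO_{D'})$ using Craw's description of the characters marking $D'$ via Reid's recipe together with the derived-categorical refinements of Logvinenko and Cautis--Craw--Logvinenko cited in the introduction, applied to the Hirzebruch--Riemann--Roch expansion $\chi(D',\mR_\chi|_{D'})=\tfrac12(L_\chi^2-K_{D'}\cdot L_\chi)+1$ for $L_\chi=c_1(\mR_\chi|_{D'})$; the combinatorial content of Theorem \ref{thm:pri} (the unlocking procedure) then matches the characters in $\Gig(C)$ so that the cancellation is exact. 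Strict positivity is inherited from \cite[Theorem~9.5]{CrawIshii04}, where $\theta(\ph_{\mfk{C}_0}(\omega_{D'}^\vee))>0$ is already among the defining inequalities of $\mfk{C}_0$. Finally, the reducible case $D'=\sum_k a_k D_k$ follows by additivity of the exact sequences, summing over prime components and their pairwise intersections.
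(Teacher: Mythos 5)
There is a genuine gap. Your argument correctly reduces, via the toric anticanonical formula and the normalization sequence for the boundary cycle, to the identity
\begin{gather*}
\theta\big(\ph_{\mfk{C}_0}(\omega_{D'}^\vee)\big)=\theta\big(\ph_{\mfk{C}_0}(\mO_{D'})\big)+\sum_{C\subset D'}\sum_{\chi\in\Gig(C)}\deg (\mR_\chi|_C)\,\theta(\chi),
\end{gather*}
but the step that would finish the proof --- computing $\theta(\ph_{\mfk{C}_0}(\mO_{D'}))$ and showing it cancels exactly the excess coming from characters with $\deg(\mR_\chi|_C)\geq 2$ and from characters counted on several curves --- is only asserted (``I would compute \dots the cancellation is exact''). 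That cancellation is where essentially all the content of the proposition lives: already for $D'\cong\pr^2$ at a trivalent vertex the three boundary curves each carry $\chi^{\otimes 2}$ with degree $2$, so the weighted sum overcounts substantially and the required value of $\theta(\ph_{\mfk{C}_0}(\mO_{D'}))$ is a nontrivial negative combination that you never produce. Appealing to Hirzebruch--Riemann--Roch and to the derived Reid's recipe papers does not by itself yield this class. The appeal to \cite[Theorem~9.5]{CrawIshii04} for strict positivity is fine, but positivity of the \emph{stated formula} requires the formula to be established first. The reducible case is also not ``additivity of exact sequences'': $D'$ is reduced but reducible, $\omega_{D'}$ is not a sum over components, and the multiplicities $a_k$ you introduce have no meaning here.

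For comparison, the paper avoids the $K$-theoretic bookkeeping entirely. It uses the Craw--Ishii identification of $\ph_{\mfk{C}_0}(\omega_{D'})$ with the minimal rigid quotient $Q$ of $\mO_Z$ for $Z\in D'$, observes that a character $\rho$ lies in $Q$ precisely when $\mR_\rho|_{D'}$ is trivial, i.e., precisely when $\rho\notin\Gig(C)$ for every exceptional curve $C\subset D'$, and then reverses $\theta(Q)<0$ to get $\theta(\C[G]/Q)>0$, where $\C[G]/Q$ consists of exactly the characters in $\bigcup_{C\subset D'}\Gig(C)$, each with multiplicity one because $G$ is abelian. The link between triviality of $\mR_\rho|_{D'}$ and membership in some $\Gig(C)$ is the key observation, and it is absent from your argument; if you want to salvage your route, that is the fact you need to prove.
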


As a~result of Propositions \ref{prop:curves} and \ref{prop:rigid1} we can immediately deduce the conclusion \cite[Proposition~3.8]{CrawIshii04} for~$\mfk{C}_0$.

\begin{Corollary}[Corollary~\ref{cor:ii}] $\mfk{C}_0$ has no Type \texttt{II} walls.
\end{Corollary}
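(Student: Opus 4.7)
The plan is to combine Propositions~\ref{prop:curves} and \ref{prop:rigid1} to show that the only candidate Type \texttt{II} wall equations of $\mfk{C}_0$, namely
\begin{equation*}
2\theta\big(\chi^{\otimes 2}\big) + \sum_{\psi \in \Gig(C)\setminus\{\chi^{\otimes 2}\}}\theta(\psi) = 0
\end{equation*}
coming from a $(1,-3)$-curve $C$ marked by $\chi$, in fact cut out faces of codimension at least two in $\ol{\mfk{C}_0}$ and therefore are not walls.

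First, I would invoke the combinatorics of Reid's recipe for $(1,-3)$-curves to verify that $\chi^{\otimes 2} \in \Gig(C)$ and that $\chi^{\otimes 2}$ marks an exceptional divisor of $G\hilb\A^3$. The latter provides the defining inequality $\theta(\chi^{\otimes 2}) > 0$ of $\mfk{C}_0$ and allows the candidate wall equation to be rewritten as
\begin{equation*}
\theta\big(\chi^{\otimes 2}\big) + \sum_{\psi \in \Gig(C)} \theta(\psi) = 0.
\end{equation*}
Second, I would apply Proposition~\ref{prop:rigid1} to the exceptional divisor $D \subset G\hilb\A^3$ containing $C$, which yields
\begin{equation*}
0 < \sum_{C' \subset D}\sum_{\psi \in \Gig(C')} \theta(\psi) = \sum_{\psi \in \Gig(C)} \theta(\psi) + \sum_{\substack{C' \subset D \\ C' \neq C}}\sum_{\psi \in \Gig(C')} \theta(\psi).
\end{equation*}
The other curves $C' \subset D$ are, by the local combinatorics around a $(1,-3)$-curve, of type $(-1,-1)$ or $(-2,0)$; Proposition~\ref{prop:curves} then identifies their unweighted $G$-igsaw sums with positive multiples of the corresponding curve inequalities, so each is nonnegative on $\ol{\mfk{C}_0}$. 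This forces $\sum_{\psi \in \Gig(C)} \theta(\psi) > 0$ as well.

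Combining the two strict inequalities $\theta(\chi^{\otimes 2}) > 0$ and $\sum_{\psi \in \Gig(C)} \theta(\psi) > 0$, the rewritten wall equation can only be attained when both summands vanish simultaneously, giving a face of $\ol{\mfk{C}_0}$ of codimension at least two. The main obstacle will be the first step: verifying that $\chi^{\otimes 2} \in \Gig(C)$ and genuinely marks a divisor, and that the neighbouring curves $C' \subset D$ are themselves of type $(-1,-1)$ or $(-2,0)$ rather than other $(1,-3)$-curves. These are combinatorial statements about the triangulation of the junior simplex and Reid's recipe around $(1,-3)$-curves, which one should verify via the unlocking procedure of Theorem~\ref{thm:pri}, possibly by a short induction on the number of $(1,-3)$-curves in $D$ if a cluster of them appears.
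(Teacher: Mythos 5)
Your overall strategy is the right one and is essentially the paper's: rewrite the $(1,-3)$-curve inequality as $\theta\big(\chi^{\otimes 2}\big)+\sum_{\psi\in\Gig(C)}\theta(\psi)$ and exhibit it as the sum of the subsheaf inequality $\theta\big(\chi^{\otimes 2}\big)>0$ and the quotient inequality for the divisor at the trivalent vertex, so that the corresponding hyperplane meets $\ol{\mfk{C}}_0$ in codimension at least two. However, two steps in your execution are wrong as stated. First, the divisor you must use is the exceptional $\pr^2$ at the trivalent vertex, and the other two exceptional curves contained in it are \emph{not} of type $(-1,-1)$ or $(0,-2)$: by definition the three edges incident to a trivalent vertex all correspond to $(1,-3)$-curves, and all three are marked with the same character $\chi$ by Reid's recipe. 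So the "local combinatorics" you invoke to control the other curves $C'\subset D$ does not hold, and there is no induction on clusters of $(1,-3)$-curves to run (there is at most one trivalent vertex, carrying exactly three such curves).

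Second, and more seriously, the deduction at the end of your second step is a non sequitur: from $0<A+B$ and $B\geq 0$ you cannot conclude $A>0$; the inequality on $B$ points the wrong way for discarding it. The correct way to close the argument -- and what the paper does -- is to observe that the union of the total $G$-igsaw pieces of the three $\chi$-curves in $D'\cong\pr^2$ is exactly $\Gig(C)=\{\chi,\chi^{\otimes 2}\}\cup\Hirz(\chi)$, and that the quotient inequality of Proposition~\ref{prop:rigid} counts each character of $\C[G]/Q$ with multiplicity one (as $G$ is abelian). Hence the quotient inequality for $D'$ is \emph{literally} $\sum_{\psi\in\Gig(C)}\theta(\psi)>0$; no subtraction of contributions from the other curves is needed or possible. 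With that substitution your first and third steps go through: $\chi^{\otimes 2}$ marks $D'$ by Reid's recipe for trivalent vertices, so $\theta\big(\chi^{\otimes 2}\big)>0$ is a subsheaf inequality, and the candidate Type \texttt{II} wall equation is the sum of two distinct valid inequalities, hence cuts out a face of codimension at least two and is not a wall.
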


We can similarly reprove \cite[Theorem~9.12]{CrawIshii04} by combinatorial means.

\begin{Proposition}[Proposition~\ref{prop:-1nec}] Each flop in~a~$(-1,-1)$-curve in~$G\hilb\A^3$ is induced by a~wall-crossing from $\mfk{C}_0$.
\end{Proposition}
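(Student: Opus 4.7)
The plan is to show that for each $(-1,-1)$-curve $C \subset G\hilb\A^3$ the inequality
$$\theta\big(\ph_{\mfk{C}_0}(\mO_C)\big)=\sum_{\chi\in\Gig(C)}\theta(\chi)>0$$
of Proposition \ref{prop:curves} is irredundant among the defining inequalities of $\mfk{C}_0$ listed in Propositions \ref{prop:curves} and \ref{prop:rigid1}. Once irredundancy is known the hyperplane $H_C:\sum_{\chi\in\Gig(C)}\theta(\chi)=0$ bounds $\mfk{C}_0$, and by Proposition \ref{prop:curves} this wall is of Type \texttt{I}, so crossing it realises the flop of $C$ directly from $\mfk{C}_0$.

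The combinatorial input is the standard observation that a $(-1,-1)$-curve corresponds to an interior edge $e$ of the $G\hilb\A^3$ triangulation whose two adjacent lattice triangles glue to a lattice parallelogram $P$. Let $\chi$ be the Reid's recipe marking of $C$. The parallelogram structure terminates the unlocking procedure of Algorithm~\ref{alg:up} after a bounded number of steps, yielding an explicit and small description of $\Gig(C)$ in terms of the characters decorating $P$.

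To prove irredundancy I would exhibit a point $\theta_0 \in \ol{\mfk{C}_0} \cap H_C$ satisfying every other defining inequality strictly. Starting from a generic $\theta^* \in \mfk{C}_0$, I perturb its values on the characters in $\Gig(C)$ so that $\sum_{\chi\in\Gig(C)}\theta(\chi) = 0$; by openness of strict inequalities, any sufficiently small such perturbation preserves strictness of every inequality whose character set is not contained in $\Gig(C)$.

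The main obstacle is verifying this non-containment for every other inequality. Rigid-divisor inequalities from Proposition~\ref{prop:rigid1} aggregate $\Gig(C')$ over all curves $C' \subset D'$, so they strictly contain $\Gig(C)$ whenever $D'$ contains a curve other than $C$, and hence pose no difficulty. For another exceptional curve $C' \neq C$, the combinatorial claim is that the unlocking procedure for $C'$ produces at least one character outside $\Gig(C)$. This should follow by a case analysis on whether $C'$ cohabits a divisor with $C$, using the parallelogram $P$ to locate an edge or vertex visited by the $C'$-procedure but not by the $C$-procedure. Establishing this uniform combinatorial separation is the technical heart of the argument.
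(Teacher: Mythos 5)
Your overall strategy---exhibit a point of $\ol{\mfk{C}}_0$ on the hyperplane for $C$ that satisfies every other inequality strictly---is the primal counterpart of the paper's argument, which instead shows that the inequality for $C$ cannot be decomposed as a nonnegative combination (a sum of ``summands'') of the remaining inequalities. Either route could work in principle, but your version rests on a combinatorial claim that is false.

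You assert that for every exceptional curve $C'\neq C$ the set $\Gig(C')$ contains a character outside $\Gig(C)$, and more generally that every other defining inequality has support not contained in $\Gig(C)$. By steps \textbf{Re} and \textbf{H2} of Algorithm~\ref{alg:up}, any curve $E$ unlocked by $C$ satisfies $\Gig(E)\subseteq\Gig(C)$ by construction: for $G=\frac{1}{30}(25,2,3)$ the $5$-curve has $\Gig(C_5)=\{5,7,9,11\}$ and unlocks the $9$-curve whose total $G$-igsaw piece is $\{9\}$. Likewise the subsheaf inequalities $\theta(\psi)>0$ for $\psi\in\Hirz(\chi)$ have support inside $\Gig(C)$. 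So the ``uniform combinatorial separation'' you hope to establish fails, and an unspecified perturbation supported on $\Gig(C)$ that drives $\sum_{\psi\in\Gig(C)}\theta(\psi)$ to zero (note this is not a small perturbation) can violate exactly these inequalities. The observation that rescues the argument---and is what the paper actually uses---is that it suffices to control the inequalities whose support contains the marking character $\chi$ itself: $\theta(\chi)$ has coefficient $1$ in the inequality for $C$; the character $\chi$ can only occur in inequalities attached to other $\chi$-curves or to quotient divisors meeting the $\chi$-chain; and each of those has at least one character outside $\Gig(C)$ (different del Pezzo contributions, different unlocked curves, or a curve incident to the divisor whose character is not represented in $\Gig(C)$). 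In your primal language: make only $\theta(\chi)$ negative and take the characters outside $\Gig(C)$ large. Separately, your parallelogram remark is misleading: $\Gig(C)$ for a $(-1,-1)$-curve is not locally determined by the two adjacent triangles---it collects one character from every del Pezzo and Hirzebruch divisor along the whole $\chi$-chain plus recursively unlocked pieces, and can be arbitrarily large.
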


We can use these formulae to show exactly which inequalities are necessary to define $\mfk{C}_0$.

\begin{Theorem}[Theorem~\ref{thm:main}] Suppose $G\subset\on{SL}_3(\C)$ is a~finite abelian subgroup. The walls of the chamber $\mfk{C}_0$ for~$G\hilb\A^3$ and their types are as follows:
\begin{itemize}\itemsep=0pt
\item a~Type \texttt{I} wall for each exceptional $(-1,-1)$-curve,
\item a~Type \texttt{III} wall for each generalised long side,
\item a~Type \texttt{0} wall for each irreducible exceptional divisor,
\item the remaining walls are of Type \texttt{0} coming from divisors as in~Proposition~{\rm \ref{prop:rigid1}}. We~discuss which of these are necessary and how to reconstruct the divisor $D'$ in~Section~{\rm \ref{sec:sum}}.
\end{itemize}
\end{Theorem}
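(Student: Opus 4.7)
The plan is to enumerate all candidate hyperplanes bounding $\mfk{C}_0$ supplied by Propositions \ref{prop:curves} and \ref{prop:rigid1} and, for each, decide whether the corresponding inequality is irredundant. Since \cite[Theorem~9.5]{CrawIshii04} guarantees that every wall of $\mfk{C}_0$ is among these hyperplanes, the task reduces to pruning the redundant candidates and matching each surviving inequality with a birational contraction of the predicted type.

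First I handle the curve inequalities. For each $(-1,-1)$-curve $C$ the inequality $\theta\big(\ph_{\mfk{C}_0}(\mO_C)\big)>0$ defines a Type \texttt{I} wall by Proposition \ref{prop:-1nec}. For each $(1,-3)$-curve, the third part of Proposition \ref{prop:curves} shows the linear form vanishes identically on $\mfk{C}_0$, foreclosing any Type \texttt{II} wall; the redundancy is also forced by the divisor inequalities, as in Corollary \ref{cor:ii}. The $(-2,0)$-curves require a grouping argument: using the combinatorics of Reid's recipe and the unlocking procedure, any maximal straight-line chain of $(-2,0)$-edges sharing a common character marking assembles into a \emph{generalised long side}. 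I would show that the individual curve inequalities along such a chain coincide up to positive scalar, and that the resulting hyperplane is irredundant because crossing it contracts a unique divisor to a curve — precisely the Type \texttt{III} contraction predicted by \cite{CrawIshii04}.

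Next I treat the divisor inequalities. Each character $\psi$ marking a prime exceptional divisor $E_\psi$ yields $\theta(\psi)>0$, which is irredundant because the wall $\theta(\psi)=0$ contracts $E_\psi$ and no other hyperplane in the list produces this contraction. The remaining candidates are the $\omega_{D'}^\vee$-inequalities of Proposition~\ref{prop:rigid1} attached to possibly reducible rigid divisors $D'$. Here I would perform a case analysis on the combinatorial shape of $D'$: expand
\begin{gather*}
\ph_{\mfk{C}_0}\big(\omega_{D'}^\vee\big)=\sum_{C\subset D'}\sum_{\chi\in\Gig(C)}\chi
\end{gather*}
via the unlocking procedure and compare this sum with non-negative linear combinations of the already-confirmed Type \texttt{I}, \texttt{III} and prime-divisor inequalities. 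Those $D'$ for which no such combination recovers the $\omega_{D'}^\vee$-inequality contribute a new Type~\texttt{0} wall; tracking precisely when this happens, together with reading back the divisor $D'$ from the coefficients, is the content of Section~\ref{sec:sum}.

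The main obstacle will be this last step. The collapse of the $(1,-3)$-curve inequalities and the regrouping of $(-2,0)$-curve chains are each local combinatorial assertions about Reid's recipe, whereas detecting redundancy among the reducible rigid divisor inequalities is a global linear-algebraic problem: given the explicit but non-minimal system $\{\theta(\nu_j)>0\}$ on $\Theta$, one must isolate the minimal irredundant subsystem. I expect this to be controlled by carefully tracking how subsets of exceptional curves assemble into rigid $D'$ and by exploiting additivity of the total $G$-igsaw piece — showing that a rigid $D'$ contributes a genuine wall precisely when $\sum_{C\subset D'}\Gig(C)$ cannot be realised as a positive combination of characters drawn from proper sub-rigid-divisors together with prime-divisor markings.
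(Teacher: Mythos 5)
Your overall architecture matches the paper's: Theorem \ref{thm:main} is there assembled from Proposition \ref{prop:-1nec} (Type \texttt{I}), Corollary \ref{cor:ii} (no Type \texttt{II}), Lemma \ref{lem:ci5} (Type \texttt{0} from prime divisors), Proposition \ref{prop:rigid} (the remaining Type \texttt{0} walls, with the irredundancy question deferred to Section \ref{sec:sum} exactly as you defer it), and Lemmas \ref{lem:bdry}--\ref{lem:bdry2} for boundary curves. The genuine gap is in your treatment of the $(0,-2)$-curves. A small but real misstatement first: the $(1,-3)$-curve inequality does not ``vanish identically on $\mfk{C}_0$'' --- it is strictly positive there; what Corollary \ref{cor:ii} actually shows is that it is a nonnegative combination of the quotient inequality for the exceptional $\pr^2$ and the subsheaf inequality $\theta\big(\chi^{\otimes 2}\big)>0$, hence redundant. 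More seriously, it is false that the curve inequalities along a generalised long side ``coincide up to positive scalar.'' A non-final boundary curve unlocks additional curves at the Hirzebruch divisors it passes, so its total $G$-igsaw piece strictly contains that of a final curve; only the (at most two) final curves produce the common inequality $\theta(\chi)+\sum_{\psi\in\Hirz(\chi)}\theta(\psi)>0$. Proving the non-final inequalities redundant requires the explicit decomposition into inequalities of unlocked curves and divisor inequalities carried out in Lemma \ref{lem:bdry2}, resting on the monotonicity of multiplicities in Lemma \ref{lem:mult}; it does not follow from proportionality.

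In addition, your grouping ``any maximal straight-line chain of $(-2,0)$-edges with a common character assembles into a generalised long side'' misses a case: a $\chi$-chain can contain both boundary edges and interior $(-1,-1)$-edges (for instance the $1$-chain for $\frac{1}{25}(1,3,21)$), in which case it is \emph{not} a generalised long side. For such chains every boundary curve gives a redundant inequality by Lemma \ref{lem:bdry} and no Type \texttt{III} wall arises, so without the dichotomy --- chain consisting entirely of boundary edges versus chain containing a $(-1,-1)$-curve --- your enumeration of Type \texttt{III} walls is not correct. Finally, your irredundancy argument for the Type \texttt{III} hyperplane (``crossing it contracts a divisor to a curve'') presupposes that the hyperplane supports a wall of $\mfk{C}_0$; the paper instead argues combinatorially that $\chi$ is the only curve-marking character appearing in the final-curve inequality and that no exceptional divisor is incident only to $\chi$-curves, so the inequality cannot be written as a nonnegative combination of the others.
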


We will define the term ``generalised long side'' in~Definition \ref{def:gls}, which is an entirely combinatorial notion. We~complete the description of wall-crossing behaviours from~\cite{Wilson92} by calling a~wall Type \texttt{0} if the corresponding contraction is a~isomorphism.

This explicit and malleable description of the walls for~$\mfk{C}_0$ has applications to studying the birational geometry of other crepant resolutions of $\A^3/G$. In~forthcoming work~\cite{ItoWormleighton}, the author and Y.~Ito use this description of $\mfk{C}_0$ to study the geometry of another Hilbert scheme-like resolution introduced in~\cite{IshiiItoNolla13} called the ``iterated $G$-Hilbert scheme'' or ``Hilb of Hilb''.

\section[Resolutions of A3/G]{Resolutions of $\boldsymbol{\A^\mathbf{3}/G}$}

\subsection{Setup}

Let $G\subset\on{SL}_3(\C)$ be a~finite abelian subgroup. We~will assume that $G$ is cyclic for notational simplicity during this preliminary section however we will shed this assumption from Section~\ref{sec:comp} onwards. We~will denote by $\frac{1}{r}(a,b,c)$ the cyclic subgroup of $\on{SL}_3(\C)$ generated by the matrix
\begin{gather*}
g=\begin{pmatrix}
\eps^a \\
& \eps^b \\
& & \eps^c
\end{pmatrix},
\end{gather*}
where $\eps$ is a~primitive $r$th root of unity. The first resolution of the quotient singularity $\A^3/G$ to be studied was the $G$-Hilbert scheme $G\hilb\A^3$, the fine moduli space of $G$-clusters: zero-dimensional $G$-invariant subschemes $Z\subset\A^3$ with $H^0(\mO_Z)\cong\C[G]$ as $G$-modules. $G$-Hilb was shown to be smooth for abelian $G$ by Nakamura~\cite{Nakamura01}, and subsequently shown to be smooth~-- and hence a~resolution~-- for all finite $G\subset\on{SL}_3(\C)$ by Bridgeland--King--Reid~\cite{BridgelandKingReid01}.

From the work of King~\cite{King94}, Ito--Nakajima~\cite{ItoNakajima00}, and Craw~\cite{Craw01} one can reinterpret $G$-Hilb as a~moduli space of quiver representations. The quiver in~question is the \textit{McKay quiver} with vertices indexed by irreducible representations of $G$ and the number of arrows between $\rho$ and $\rho'$ defined to be
\begin{gather*}
\on{dim}\on{Hom}_G(\rho'\otimes\rho_{\text{std}},\rho),
\end{gather*}
where $\rho_\text{std}$ is the standard representation of $G$ acting on~$\C^3$. We~choose the dimension vector $\ub{d}=(\on{dim}{\rho})_\rho$ and a~stability parameter $\theta\in\Theta$ as defined above. We~define $\M_\theta:=\M_\theta(Q,\ub{d})$ to be the fine moduli space of $\theta$-stable representations of the McKay quiver with dimension vector~$\ub{d}$ subject to certain relations coming from the associated preprojective algebra. When $\theta(\rho)>0$ for all nontrivial $\rho$ one has that $\M_\theta(Q,\ub{d})=G\hilb\A^3$.

It was apparent from~\cite{BridgelandKingReid01} that their smoothness result and equivalence of categories (\ref{eqn:BKR}) holds for any generic $\theta$ and so one obtains potentially many different resolutions of the form $\M_\theta(Q,\ub{d})$ and also the corresponding equivalences of categories
\begin{gather*}
\Phi_\theta\colon\ D^b(\M_\theta)\to D^b_G\big(\A^3\big).
\end{gather*}

As discussed above, the stability space $\Theta$ has a~wall-and-chamber structure such that any~$\theta$,~$\theta'$ from the same open chamber $\mfk{C}\subset\Theta$ produce isomorphic moduli spaces: $\M_\theta\cong\M_{\theta'}$. For~simplicity we denote by $\M_\mfk{C}$ and $\Phi_\mfk{C}$ the moduli space and equivalence of categories for any generic $\theta\in\mfk{C}$.

When $G$ is abelian, each resolution $\M_\mfk{C}$ is toric. Fix the lattice $N=\Z^3+\Z\cdot\big(\frac{a}{r},\frac{b}{r},\frac{c}{r}\big)$. The singularity $\A^3/G$ is described by the cone $\sigma=\on{Cone}(e_1,e_2,e_3)$ inside $N_\R=N\otimes_Z\R=\R^3_{\langle x_1,x_2,x_3\rangle}$ and crepant resolutions correspond to triangulations of the slice $\sigma\cap(x_1+x_2+x_3=1)$~-- usually referred to as the ``junior simplex''~-- such that the vertices of each triangle lies in~$N$, and each triangle is smooth (its vertices form a~$\Z$-basis of $N$). In~pictures we will always draw only the slice to produce two-dimensional pictures.

\subsection{Reid's recipe} \label{sec:rr}

Let us focus on the case $\M_\mfk{C}=G\hilb\A^3$. We~will denote the universal $G$-cluster by $\mathcal{Z}$ and the chamber of $\Theta$ corresponding to $G$-Hilb by $\mfk{C}_0$. Craw--Reid~\cite{CrawReid02} present an entertaining algorithm to construct the triangulation for~$G$-Hilb that, after commenting on some of the salient features, we will use without comment.

We call the triangulation for~$G$-Hilb the \textit{Craw--Reid triangulation}. It~divides the junior simplex into so-called ``regular triangles'' of equal side length that fall into one of two cases:
\begin{itemize}\itemsep=0pt
\item \textit{corner triangles}, which have one of the vertices $e_1$, $e_2$, $e_3$ of the junior simplex as a~vertex,
\item \textit{meeting of champions}, for which none of the vertices of the junior simplex are vertices.
\end{itemize}
Craw--Reid show that there is at most one meeting of champions triangle (possibly of side length zero, in~which case it is a~point). After dividing the junior simplex into such triangles, one subdivides them further into smooth triangles as depicted in~Fig.~\ref{fig:reg}: the resulting unimodal triangulation describes the resolution $G$-Hilb. We~call line segments on the boundary of a regular triangle \emph{boundary edges} and the corresponding toric curves \emph{boundary curves}.

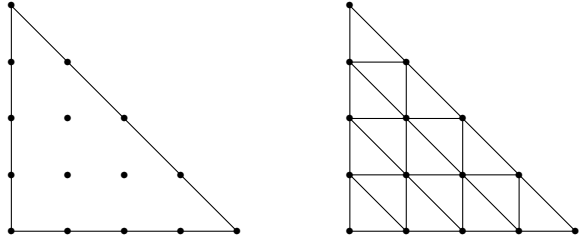
\begin{figure}[h]\centering
\begin{tikzpicture}[scale=0.75]
\tiny
\node (a0) at (0,0){$\bullet$};
\node (a1) at (1,0){$\bullet$};
\node (a2) at (2,0){$\bullet$};
\node (a3) at (3,0){$\bullet$};
\node (a4) at (4,0){$\bullet$};
\node (b0) at (0,1){$\bullet$};
\node (c0) at (0,2){$\bullet$};
\node (d0) at (0,3){$\bullet$};
\node (e0) at (0,4){$\bullet$};
\node (b1) at (1,1){$\bullet$};
\node (b2) at (2,1){$\bullet$};
\node (b3) at (3,1){$\bullet$};
\node (c1) at (1,2){$\bullet$};
\node (c2) at (2,2){$\bullet$};
\node (d1) at (1,3){$\bullet$};

\draw (a0.center) to (a4.center) to (e0.center) to (a0.center);

\node (a0) at (6,0){$\bullet$};
\node (a1) at (7,0){$\bullet$};
\node (a2) at (8,0){$\bullet$};
\node (a3) at (9,0){$\bullet$};
\node (a4) at (10,0){$\bullet$};
\node (b0) at (6,1){$\bullet$};
\node (c0) at (6,2){$\bullet$};
\node (d0) at (6,3){$\bullet$};
\node (e0) at (6,4){$\bullet$};
\node (b1) at (7,1){$\bullet$};
\node (b2) at (8,1){$\bullet$};
\node (b3) at (9,1){$\bullet$};
\node (c1) at (7,2){$\bullet$};
\node (c2) at (8,2){$\bullet$};
\node (d1) at (7,3){$\bullet$};

\draw (a0.center) to (a4.center) to (e0.center) to (a0.center);
\draw (a1.center) to (b0.center);
\draw (a2.center) to (c0.center);
\draw (a3.center) to (d0.center);
\draw (b0.center) to (b3.center);
\draw (c0.center) to (c2.center);
\draw (d0.center) to (d1.center);
\draw (a1.center) to (d1.center);
\draw (a2.center) to (c2.center);
\draw (a3.center) to (b3.center);
\end{tikzpicture}
\caption{A regular triangle and its triangulation.}\label{fig:reg}
\end{figure}

In early versions of the McKay correspondence~\cite{Reid} one of the chief aims was to supply a~bijection from irreducible characters of $G$ to a~basis of cohomology on a~crepant resolution. This was explicitly computed for~$G$-Hilb by Craw~\cite{Craw05} when $G$ is abelian using ``Reid's recipe'': a~labelling of exceptional subvarieties by characters of $G$. Reid's recipe is one of the main tools we will use to compute walls and so we will describe it in~some detail.

An exceptional curve $C$ in~$G$-Hilb corresponds to an edge in~the Craw--Reid triangulation, which in~turn corresponds to a~two-dimensional cone in~the fan for~$G$-Hilb. A primitive normal vector $(\alpha,\beta,\gamma)$ to this cone defines a~$G$-invariant ratio of monomials
\begin{gather*}
x^\alpha y^\beta z^\gamma=m_1/m_2,
\end{gather*}
where $x$, $y$, $z$ are eigencoordinates on~$\C^3$ for~$G$. Mark the curve $C$ with the character by which~$G$ acts on~$m_1$ (or~$m_2$). We~define the $\chi$\textit{-chain} to be the collection of all exceptional curves (or~edges in~the Craw--Reid triangulation) marked by the character $\chi$. We~say that a~triangle in~the Craw--Reid triangulation is a~$\chi$\textit{-triangle} if one of its edges is marked with the character~$\chi$.

After marking all curves, there is a~procedure for labelling the compact exceptional divisors, or interior vertices of the triangulation. Let $D$ be such a~divisor corresponding to a~vertex $v$. There are three cases:
\begin{itemize}\itemsep=0pt
\item $v$ is trivalent: $D\cong\pr^2$ and the three exceptional curves in~$D$ are all marked with the same character $\chi$. \textit{Mark $D$ with $\chi^{\otimes 2}$}.
\item $v$ is $4$- or $5$-valent, or $6$-valent and not inside a~regular triangle: $D$ is a~Hirzebruch surface blown up in~$\text{valency-}4$ points. There are two pairs of exceptional curves in~$D$ each marked with the same character $\chi$ and $\chi'$. \textit{Mark $D$ with $\chi\otimes\chi'$}.
\item $v$ is $6$-valent and lies in~the interior of a regular triangle: $D$ is a~del Pezzo surface of degree~$6$, and there are three pairs of exceptional curves each marked with the same character $\chi$,~$\chi'$,~$\chi''$. $D$ has two $G$-invariant maps to $\pr^2$, \textit{mark $D$ by the two characters arising from the monomials constituting these two maps}. These two characters $\phi_1$, $\phi_2$ satisfy
\begin{gather*}
\chi\otimes\chi'\otimes\chi''=\phi_1\otimes\phi_2.
\end{gather*}
\end{itemize}
For more detail see \cite[Lemmas~3.1--3.4]{Craw05}. We~will frequently refer to a~curve or a~divisor marked with a~character $\chi$ as a~$\chi$-curve or a~$\chi$-divisor.

\begin{Example} In Fig.~\ref{fig:30t} with $G=\frac{1}{30}(25,2,3)$, the leftmost curve marked with the character~$20$ has normal $(-2,25,0)$ giving the $G$-invariant ratio $y^{25}/x^2$. $G$ acts on the numerator and denominator by the character $\eps\mapsto\eps^{20}$, hence the marking. The divisor marked with $23$ incident to the previous curve marked with $20$ has two pairs of curves with characters $20$ and $3$ and a~fifth curve with character $15$. Thus, the divisor is correctly marked by $20+3=23$.
\end{Example}

We refer to divisors of the first two types~-- that is, all divisors not isomorphic to a~del Pezzo surface of degree $6$~-- as \textit{Hirzebruch divisors}, and to divisors isomorphic to a~del Pezzo surface of degree $6$ as \textit{del Pezzo divisors}. We~ask the reader to have grace on the slight abuse of terminology as $\pr^2$ is also a~del Pezzo surface. For a~character $\chi$ marking a~curve, we denote by $\Hirz(\chi)$ the set of characters marking Hirzebruch divisors in~the interior of the $\chi$-chain and by $\dP(\chi)$ the set of characters marking del Pezzo divisors in~the interior of the $\chi$-chain. We~will often say ``along the $\chi$-chain'' in~place of ``in the interior of the $\chi$-chain''.

\subsection[G-igsaw pieces]{$\boldsymbol{G}$-igsaw pieces}
Consider the $G$-clusters at torus-fixed points of $G$-Hilb, or triangles in~the Craw--Reid triangulation. The ideal defining such a~cluster is a~monomial ideal and one can draw a~Newton polygon in~the hexagonal lattice $\Z^3/\Z\cdot(1,1,1)$ to illustrate the monomial basis. An example of a torus-fixed $G$-cluster for the group $G=\frac{1}{6}(1,2,3)$ is shown in~Fig.~\ref{fig:clust}. Notice that there is exactly one monomial in~each character space for~$G$ as desired.

\begin{figure}[h]\centering
$$\begin{array}{ccccccccccc}
& y \\
yz & & 1 & & x \\
& z & & xz
\end{array}$$
\caption{A $G$-cluster for~$G=\frac{1}{6}(1,2,3)$ corresponding to a~torus-fixed point of $G$-Hilb.}\label{fig:clust}
\end{figure}

The monomial ideal in~$\C[x,y,z]$ defining this cluster is
\begin{gather*}
\big\langle x^2,y^2,z^2,xy\big\rangle.
\end{gather*}

$G$-clusters corresponding to adjacent triangles separated by an exceptional curve $C$ differ by taking a~subset of the monomials basing one $G$-cluster and moving them to other monomials in~the same character space; that is, multiplying by $G$-invariant ratios of monomials. This process was studied in~\cite{Nakamura01} and called a~$G$\textit{-igsaw transformation}.

\begin{Definition} Let $C\subset G\hilb\A^3$ be an exceptional curve corresponding to the common edge of two adjacent triangles $\tau$, $\tau'$. Denote by $Z_\tau$, $Z_{\tau'}$ the two $G$-clusters corresponding to the torus-fixed points of $G\hilb\A^3$ for~$\tau$, $\tau'$. We~call the set of characters labelling monomials in~$Z_\tau$ (or~in~$Z_{\tau'}$) that partake in~the $G$-igsaw transformation the \emph{total $G$\textit-igsaw piece} for~$C$ and denote it by~$\Gig(C)$.
\end{Definition}

We will often also refer to the set of monomials underlying $\Gig(C)$ for one of the triangles either side of $C$ as a~total $G$-igsaw piece. There is a~single monomial that divides all others in~the total $G$-igsaw piece, and this is the monomial in~the $G$-cluster in~the character space for the character marking $C$. Indeed, an alternative definition of a total $G$-igsaw piece for~$C$ is the set of all monomials in~the $G$-graph for one of the triangles adjacent to $C$ that are divisible by the monomial in~this eigenspace.

\begin{Example} We continue the example of $G=\frac{1}{6}(1,2,3)$. The Craw--Reid triangulation and Reid's recipe for this group is shown in~Fig.~\ref{fig:1/6a}. The triangle labelled by $\star$ is the triangle corresponding to the $G$-cluster from Fig.~\ref{fig:clust}.

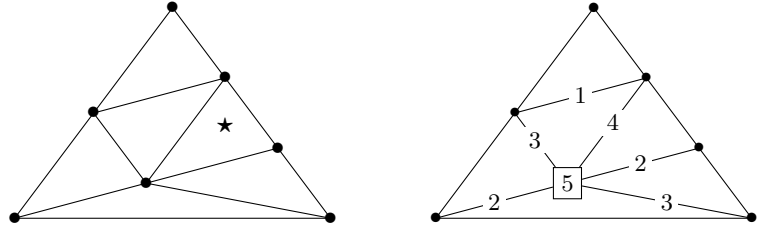
\begin{figure}[h]\centering
\begin{tikzpicture}[scale=0.7]
\node(e1) at (0,2){$\bullet$};
\node(e2) at (3,-2){$\bullet$};
\node(e3) at (-3,-2){$\bullet$};

\node(123) at (-0.5,-4/3){$\bullet$};
\node(240) at (2,2-8/3){$\bullet$};
\node(420) at (1,2-4/3){$\bullet$};
\node(303) at (-1.5,0){$\bullet$};

\node (star) at (1,-0.25){\Large $\star$};

\draw[-] (e1.center) to (e2.center) to (e3.center) to (e1.center);

\draw[-] (e2.center) to (123.center);
\draw[-] (e3.center) to (240.center);
\draw[-] (303.center) to (420.center);
\draw[-] (123.center) to (303.center);
\draw[-] (123.center) to (420.center);

\small

\node(e1) at (0+8,2){$\bullet$};
\node(e2) at (3+8,-2){$\bullet$};
\node(e3) at (-3+8,-2){$\bullet$};

\footnotesize

\node[draw,fill=white,inner sep=3pt] (123) at (-0.5+8,-4/3){$5$};
\node(240) at (2+8,2-8/3){$\bullet$};
\node(420) at (1+8,2-4/3){$\bullet$};
\node(303) at (-1.5+8,0){$\bullet$};

\draw[-] (e2.center) to node[fill=white,inner sep=2pt] {$3$} (123);
\draw[-] (e3.center) to node[fill=white,inner sep=2pt] {$2$} (123) to node[fill=white,inner sep=2pt] {$2$} (240.center);
\draw[-] (303.center) to node[fill=white,inner sep=2pt] {$1$} (420.center);
\draw[-] (123) to node[fill=white,inner sep=2pt] {$3$} (303.center);
\draw[-] (123) to node[fill=white,inner sep=2pt] {$4$} (420.center);

\draw[-] (e1.center) to (e2.center) to (e3.center) to (e1.center);
\end{tikzpicture}
\caption{Triangulation and Reid's recipe for~$\frac{1}{6}(1,2,3)$.}\label{fig:1/6a}
\end{figure}

Passing through the $4$-curve $C$ adjacent to the triangle $\star$ performs a~$G$-igsaw transformation with total $G$-igsaw piece centred on the monomial with character $4$, which in~this case is $xz$. The $G$-igsaw transformation switches $xz$ for~$y^2$~-- since the $G$-invariant ratio for~$C$ is $xz/y^2$~-- producing the new $G$-cluster
\begin{gather*}
\begin{array}{ccccccccccc}
y^2 \\
& y \\
yz & & 1 & & x \\
& z &
\end{array}
\end{gather*}
If we pass through the $2$-curve bordering $\star$ then the total $G$-igsaw piece contains the monomials $y$, $yz$ and produces the $G$-cluster
\begin{gather*}
\begin{array}{ccccccccccc}
& 1 & & x & & x^2\\
 z & & xz & & x^2z
\end{array}
\end{gather*}
\end{Example}

As the two total $G$-igsaw pieces for a~given curve are related by multiplying by $G$-invariant ratios it is clear that they each have the same set of characters represented by their monomials. We~denote the set of characters in~either total $G$-igsaw piece for a~curve $C$ by $\Gig(C)$. For convenience we will also denote by $\chi(m)$ the character by which $G$ acts on a~monomial $m$.

\subsection{Tautological bundles} \label{sec:taut}

The sheaf $\mR=\pi_*\mO_{\mathcal{Z}}$ is locally free with fibre $H^0(\mO_Z)$ above $Z\in G\hilb\A^3$. It~splits into eigensheaves
\begin{gather*}
\mR=\bigoplus_{\chi\in\on{Irr}{G}}\mR_\chi
\end{gather*}
and these summands are called \textit{tautological bundles}. Since $G$ is abelian, the $\mR_\chi$ are line bundles.~\cite{Craw05} gives relations between these line bundles in~the Picard group, which translate to divisibility relations between eigenmonomials. For a~triangle $\tau$ in~the Craw--Reid triangulation, denote the generator of $\mR_\chi$ on the affine piece corresponding to $\tau$ by $r_{\chi,\tau}$. We~usually omit reference to $\tau$ so long as the context is clear

\begin{Theorem}[{\cite[Theorem~6.1]{Craw05}}] \label{thm:craw} The relations between $($generators of$)$ tautological line bundles are described by Reid's recipe in~the following way.
\begin{itemize}\itemsep=0pt
\item If three lines marked with the same character $\chi$ meet at a~vertex marked with $\psi=\chi^{\otimes 2}$ then
\begin{gather*}
r_\chi^2=r_\psi.
\end{gather*}
\item If four or five or six lines consisting of two pairs marked by characters $\chi$, $\chi'$ and zero or one or two extra lines marked with further characters meet at a~vertex marked with $\psi=\chi\otimes\chi'$ then
\begin{gather*}
r_\chi\cdot r_{\chi'}=r_\psi.
\end{gather*}
\item If six lines consisting of three pairs marked by characters $\chi$, $\chi'$, $\chi''$ meet at a~vertex marked with $\phi$, $\phi'$ then
\begin{gather*}
r_\chi\cdot r_{\chi'}\cdot r_{\chi''}=r_\phi\cdot r_{\phi'}.
\end{gather*}
\end{itemize}
\end{Theorem}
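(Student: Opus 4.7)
The plan is local verification via monomial identities in the toric affine charts corresponding to triangles of the Craw--Reid triangulation, together with a combinatorial analysis at each vertex. First I would realise $r_\chi$ explicitly on each chart: for a triangle $\tau$, let $U_\tau\cong\A^3$ be the corresponding toric affine chart of $G\hilb\A^3$; then $\mathcal{Z}|_{U_\tau}$ has fibre at the torus-fixed point equal to the $G$-graph $Z_\tau$, and the $\chi$-eigensummand $r_{\chi,\tau}$ is freely generated over $\mO_{U_\tau}$ by the unique monomial $m_{\chi,\tau}\in Z_\tau$ of character $\chi$. Under this identification the three claimed relations translate to the monomial identities (i) $m_\chi^2=m_\psi$, (ii) $m_\chi\cdot m_{\chi'}=m_\psi$, and (iii) $m_\chi\cdot m_{\chi'}\cdot m_{\chi''}=m_\phi\cdot m_{\phi'}$ in each local $G$-graph around the relevant vertex. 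Compatibility across adjacent triangles $\tau,\tau'$ via the $G$-igsaw transformation -- which multiplies the affected $m_{\chi,\bullet}$ by the $G$-invariant ratio of monomials defining the shared edge -- ensures that it suffices to verify each relation in a single $G$-graph adjacent to the vertex.

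Second, I would inspect each vertex type. At a trivalent vertex $v$ marked $\psi=\chi^{\otimes 2}$ the three incident edges are all marked $\chi$; direct inspection of the hexagonal $G$-graph of any adjacent triangle shows that $m_\chi^2$ is itself a monomial in $Z_\tau$, necessarily lying in the $\chi^{\otimes 2}$-eigenspace, so by uniqueness $m_\chi^2=m_{\chi^{\otimes 2}}=m_\psi$. At a Hirzebruch vertex marked $\psi=\chi\otimes\chi'$, the two pairs of edges marked $\chi$ and $\chi'$ place $m_\chi,m_{\chi'}$ on distinct axes of the $G$-graph, so their product is again a monomial of $Z_\tau$ of character $\chi\otimes\chi'$, giving $m_\chi\cdot m_{\chi'}=m_\psi$ (the extra unmarked edges do not affect this). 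At a del Pezzo vertex with pairs marked $\chi,\chi',\chi''$ and vertex-marks $\phi,\phi'$, the two $G$-invariant maps $D\to\pr^2$ are cut out by two monomial triples whose products realise $\phi$ and $\phi'$, and a side-by-side comparison in the $G$-graph establishes the identity $m_\chi\cdot m_{\chi'}\cdot m_{\chi''}=m_\phi\cdot m_{\phi'}$.

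The main obstacle is the del Pezzo case: one must verify that the two $G$-invariant maps $D\to\pr^2$ prescribed by Reid's recipe correspond to distinct monomial factorisations of the same element in the $G$-graph, which requires unpacking the explicit $G$-cluster structure at the six torus-fixed points around a degree-$6$ del Pezzo vertex and matching two distinct paths through the hexagonal lattice. The Hirzebruch and trivalent cases reduce to shorter combinatorial checks, but they still depend on the Craw--Reid description of $G$-clusters at torus-fixed points, which is the principal external input.
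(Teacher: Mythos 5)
This statement is not proved in the paper at all: it is \cite[Theorem~6.1]{Craw05} quoted verbatim and used as an external input (the paper only ever refers to ``Cases 2--3'' and ``Case 4'' of Craw's proof and to his Fig.~12, reproduced here as Fig.~\ref{fig:gens}). Your sketch is essentially Craw's own argument --- trivialise each $\mR_\chi$ on the toric chart of a triangle by the unique monomial of character $\chi$ in the corresponding $G$-graph, then verify the three monomial identities by a case analysis on the valency of the vertex --- so there is no divergence of approach to report. The one soft spot is your claim that $G$-igsaw compatibility reduces the check to a single chart: the generator $m_{\psi,\tau}$ changes across an edge by the $\deg(\mR_\psi|_C)$-th power of the invariant ratio, and knowing that this power matches the one for $m_\chi^2$ (resp.\ $m_\chi m_{\chi'}$) is essentially equivalent to the relation you are trying to propagate, so in practice one must write down the generators on each chart around the vertex, as Craw does.
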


The claim is that these relations hold and generate all relations between tautological bundles. We~will make heavy use of these divisibility relations between eigenmonomials to study $G$-igsaw pieces for exceptional curves.

As alluded to, the work of Logvinenko~\cite{Logvinenko08}, Cautis--Logvinenko~\cite{CautisLogvinenko09}, and Craw--Cautis--Log\-vi\-nenko~\cite{CautisCrawLogvinenko17} categorifies Reid's recipe via the tautological bundles. Many of the constructions in~\cite[Sections~3--4]{CautisCrawLogvinenko17} resemble constructions made in~Section~\ref{sec:comp} below, however the computations they make are for the inverse equivalence of~(\ref{eqn:BKR}) to that utilised in~\cite{CrawIshii04} and here.

Evident from~\cite{CautisCrawLogvinenko17,CrawIshii04} and below, characters marking a~divisor or a~single curve are special. They are termed ``essential characters'' and have been further examined in~\cite{CrawItoKarmazyn18,Takahashi}.

\subsection[Abstract inequalities for C_0]{Abstract inequalities for~$\boldsymbol{\mfk{C}_0}$}

In \cite[Section~9]{CrawIshii04} Craw--Ishii provide an abstract description of sufficiently many inequalities to carve out the chamber $\mfk{C}_0$. These inequalities arise from the linearisation map
\begin{gather*}
L_{\mfk{C}_0}\colon\ \Theta\to\on{Pic}(\M_{\mfk{C}_0}),
\end{gather*}
which takes $\theta$ to the ample $\Q$-divisor on~$\M_\theta$ arising from GIT; note that we have identified the Picard groups of different resolutions. By construction $L_{\mfk{C}_0}(\mfk{C}_0)\subset\on{Amp}(\M_{\C_0})$ and so one obtains inequalities
\begin{gather*}
\theta\big(\ph_{\mfk{C}_0}(\mO_C)\big)>0
\end{gather*}
for all exceptional curves $C\subset G$-Hilb. If such an inequality is necessary to define $\mfk{C}_0$, the geometry of $C$ determines the type of the corresponding wall as follows:
\begin{itemize}\itemsep=0pt
\item If $C$ is a~$(-1,-1)$-curve~-- that is, it corresponds to an interior edge inside a~regular triangle~-- then $(\theta(\ph_{\mfk{C}_0}(\mO_C))=0)\cap\ol{\mfk{C}}_0$ is a~Type \texttt{I} wall.
\item If $C$ is a~$(1,-3)$-curve~-- that is, it corresponds to one of the edges incident to a~trivalent vertex~-- then $(\theta(\ph_{\mfk{C}_0}(\mO_C))=0)\cap\ol{\mfk{C}}_0$ is a~Type \texttt{II} wall.
\item If $C$ is contained in~a~Hirzebruch divisor but it is not in~either of the previous cases, then $(\theta(\ph_{\mfk{C}_0}(\mO_C))=0)\cap\ol{\mfk{C}}_0$ is a~Type \texttt{III} wall.
\end{itemize}
One can express the inequality $\theta(\ph_\mfk{C}(\mO_C))>0$ abstractly via \cite[Corollary~5.2]{CrawIshii04}, a~consequence of which is
\begin{gather*}
\theta\big(\ph_{\mfk{C}}(\mO_C)\big)=\sum_\rho\deg (\mR_\rho|_C)\theta(\rho).
\end{gather*}
Any character $\rho$ not in~$\Gig(C)$ has $\mR_\rho|_C=\mO_C$ and so it doesn't appear in~the sum above. It~follows that
\begin{gather*}
\theta\big(\ph_\mfk{C}(\mO_C)\big)=\sum_{\rho\in G\text{-ig}(C)}\deg (\mR_\rho|_C)\theta(\rho).
\end{gather*}

To complete the description by Craw--Ishii, their remaining inequalities~-- which if they are necessary inequalities will define walls of Type \texttt{0}~-- are obtained from divisors in~two ways outlined in~Lemma~\ref{lem:ci5} and Proposition~\ref{prop:rigid}.

\begin{Lemma}[{\cite[Corollary~5.6 and Theorem~9.3]{CrawIshii04}}] \label{lem:ci5} Suppose that $D\subset G\hilb\A^3$ is an irreducible exceptional divisor marked with a~character $\psi$. Then all $\theta\in\mfk{C}_0$ satisfy the inequality
\begin{gather*}
\theta\big(\ph_{\mfk{C}_0}\big(\mR_\psi^{-1}|_D\big)\big)=\theta(\psi)>0.
\end{gather*}
Moreover, the inequalities of this form are necessary and hence define walls of $\mfk{C}_0$.
\end{Lemma}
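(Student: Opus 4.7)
The plan is to separate the lemma into its three constituent pieces: the identity $\theta(\ph_{\mfk{C}_0}(\mR_\psi^{-1}|_D)) = \theta(\psi)$, the positivity of this quantity on $\mfk{C}_0$, and the necessity of the inequality, which I expect to be the main obstacle.

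For the identity, I would use that $\ph_{\mfk{C}_0}$ is the K-theoretic shadow of the BKR Fourier--Mukai transform, and under this identification each tautological line bundle $\mR_\chi$ is sent to the character $\chi \in \on{Rep}(G)$. Starting from the short exact sequence
\begin{gather*}
0 \to \mR_\psi^{-1}\otimes\mO_{\M_{\mfk{C}_0}}(-D) \to \mR_\psi^{-1} \to \mR_\psi^{-1}|_D \to 0,
\end{gather*}
taking K-classes gives $[\mR_\psi^{-1}|_D] = [\mR_\psi^{-1}] - [\mR_\psi^{-1}(-D)]$. Applying $\ph_{\mfk{C}_0}$ to this virtual class and then pairing with $\theta$, most of the characters cancel after invoking the balancing relation $\sum_\chi d_\chi\theta(\chi)=0$ in $\Theta$, and what remains is precisely $\theta(\psi)$. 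The crucial input that makes this bookkeeping close is Theorem~\ref{thm:craw}, which expresses the restriction $\mR_\psi|_D$ in terms of the characters marking the edges and vertices of the star of $D$, and in particular guarantees that only the character $\psi$ marking $D$ itself survives the cancellation.

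Positivity then follows directly from the construction of the linearisation map $L_{\mfk{C}_0}$: for $\theta \in \mfk{C}_0$ the divisor $L_{\mfk{C}_0}(\theta)$ is ample on $\M_{\mfk{C}_0}$, and by standard K-theory/intersection-theory identifications the number $\theta(\ph_{\mfk{C}_0}(\mR_\psi^{-1}|_D))$ computes the intersection of this ample class against an effective curve class supported on the exceptional divisor $D$. Ampleness then forces the result to be strictly positive.

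The hard part is necessity. Here I would argue that moving $\theta$ across the hyperplane $(\theta(\psi)=0)$ genuinely changes the moduli problem even though $D$ is irreducible. Concretely, for generic $\theta_0$ on this hyperplane in $\overline{\mfk{C}}_0$, the object on $\M_{\mfk{C}_0}$ whose BKR-image is the character $\psi$ (a certain twist of $\mO_D$) becomes strictly $\theta_0$-semistable and acquires a new Jordan--H\"older filtration on the other side. This forces a Type \texttt{0} wall: the birational contraction induced by crossing is the identity on the underlying variety but genuinely alters the linearisation, so the inequality $\theta(\psi) > 0$ cannot be a consequence of the other inequalities cutting out $\mfk{C}_0$. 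The obstacle is identifying precisely the semistable object that destabilises exactly when $\theta(\psi) = 0$; this requires a perverse-sheaf or tilting-theoretic analysis rather than K-theoretic bookkeeping alone, and it is here that the full force of the Craw--Ishii framework is used.
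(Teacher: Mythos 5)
This lemma is not proved in the paper: it is imported wholesale from Craw--Ishii (their Corollary~5.6 and Theorem~9.3), and the only argument the paper supplies is the short explanatory paragraph following the statement, whose entire content is the observation that the eigenmonomial $r_\psi$ lies in the socle of $H^0(\mO_Z)$ for every $G$-cluster $Z$ parameterised by a point of $D$ and is constant along $D$ (this is Lemma~\ref{lem:ci}, i.e., Craw--Ishii's Lemma~9.1, combined with their Corollary~4.6). Consequently $S=\mO_0\otimes\psi$ is a subsheaf of $\mO_Z$ for all $Z\in D$, and $\theta$-stability of each such $G$-cluster immediately forces $\theta(S)=\theta(\psi)>0$; the same subsheaf is what becomes strictly semistable on the hyperplane $(\theta(\psi)=0)$, which is why the inequality is necessary. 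Your proposal never identifies this subsheaf, and that omission is the source of both genuine gaps below.

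First, your positivity argument is wrong as stated: $\mR_\psi^{-1}|_D$ is a sheaf supported on a \emph{divisor}, so its class in $K_0(\M_{\mfk{C}_0})$ is not an effective curve class, and $\theta\big(\ph_{\mfk{C}_0}\big(\mR_\psi^{-1}|_D\big)\big)$ is not an intersection number of the ample class $L_{\mfk{C}_0}(\theta)$ against a curve. The intersection-with-$L_{\mfk{C}_0}(\theta)$ mechanism is exactly what produces the \emph{curve} inequalities $\theta(\ph_{\mfk{C}_0}(\mO_C))>0$; the divisor inequalities of this lemma come instead from the definition of $\theta$-stability applied to the subsheaf $S\subset\mO_Z$. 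Second, your treatment of necessity stalls precisely where the subsheaf would rescue it: you say the obstacle is ``identifying precisely the semistable object that destabilises exactly when $\theta(\psi)=0$'' and suggest this needs perverse-sheaf or tilting machinery, but the destabilising object is just $S=\mO_0\otimes\psi$, visible by elementary means once one knows $r_\psi$ is in the socle; the wall statement is then Craw--Ishii's Proposition~9.3. Your K-theoretic sketch of the identity $\ph_{\mfk{C}_0}\big(\mR_\psi^{-1}|_D\big)=\psi$ is plausible in outline but is not carried out (the claimed cancellation ``after invoking the balancing relation'' is asserted, not verified), whereas the paper simply cites Craw--Ishii's Corollary~5.6 for it. As written, the proposal does not establish any of the three claims.
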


We call the inequalities coming from Lemma~\ref{lem:ci5} \emph{subsheaf inequalities}. The reason for this is the following. Suppose that $\mO_Z$ is a~$G$-cluster parameterised by a~point in~an irreducible exceptional divisor $D\subset G$-Hilb. Let $\psi$ be a~character marking $D$. It~follows from \cite[Corollary~4.6 and Lemma~9.1]{CrawIshii04} that $r_\psi$ is in~the socle of $H^0(\mO_Z)$ and is constant on~$D$, hence defines a~subsheaf $S=\mO_0\otimes\psi$ of $\mO_Z$ for all $Z\in D$. By definition we must have $\theta(S)=\theta(\psi)>0$ for all $\theta\in\mfk{C}_0$ and \cite[Proposition~9.3]{CrawIshii04} shows that this is indeed a~wall of $\mfk{C}_0$.

There is a~dual version of this using quotients instead of subsheaves. \cite[Lemma~5.7 and Theorem~9.5]{CrawIshii04} shows that
\begin{gather*}
\theta\big(\ph_{\mfk{C}_0}(\omega_{D'})\big)<0
\end{gather*}
for all $\theta\in\mfk{C}_0$ and for every possibly reducible but connected exceptional divisor $D'$. From \cite[Theorem~9.5]{CrawIshii04} this inequality corresponds to evaluating $\theta$ on the minimal rigid quotient $Q$ of $\mO_Z$ for all $Z\in D'$.

\begin{Proposition} \label{prop:rigid} Suppose $D'\subset G\hilb\A^3$ is a~possibly reducible but connected exceptional divisor. Then all $\theta\in\mfk{C}_0$ satisfy the inequality
\begin{gather*}
\sum_{C\subset D'}\sum_{\chi\in\Gig(C)}\theta(\chi)>0.
\end{gather*}
\end{Proposition}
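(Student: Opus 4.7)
The plan is to combine the Craw--Ishii dualising sheaf bound with an explicit K-theoretic identification. By \cite[Lemma~5.7 and Theorem~9.5]{CrawIshii04} we have $\theta\bigl(\ph_{\mfk{C}_0}(\omega_{D'})\bigr)<0$, equivalently $\theta\bigl(\ph_{\mfk{C}_0}(\omega_{D'}^\vee)\bigr)>0$, for all $\theta\in\mfk{C}_0$. So it suffices to prove the equality
\begin{gather*}
\theta\bigl(\ph_{\mfk{C}_0}(\omega_{D'}^\vee)\bigr)=\sum_{C\subset D'}\sum_{\chi\in\Gig(C)}\theta(\chi),
\end{gather*}
which is the content of the statement up in Proposition~\ref{prop:rigid1}; the inequality version is then immediate.

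The main simplification comes from noting that for any torus-fixed point $p\in G\hilb\A^3$ the class $\ph_{\mfk{C}_0}([\mO_p])$ equals the regular representation $\sum_{\chi}\chi$, and $\sum_{\chi}\theta(\chi)=0$ for $\theta\in\Theta$ since abelian $G$ has $d_\rho=1$. Hence we may compute $[\omega_{D'}^\vee]$ in $K_0$ of the exceptional fibre modulo torus-fixed point classes, and both sides of the target equality will be unaffected.

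To carry out this K-theoretic computation, note that $\omega_X$ is trivial on the crepant resolution, so adjunction gives $\omega_{D'}^\vee=\mO_X(-D')|_{D'}$. For each irreducible toric surface $D\subset D'$, the toric adjunction formula $-K_D=\sum_{C\subset D}C$ together with iterated short exact sequences
\begin{gather*}
0\to\mO_D(H_1+\cdots+H_{k-1})\to\mO_D(H_1+\cdots+H_k)\to\mO_{H_k}(H_1+\cdots+H_k)\to 0
\end{gather*}
produces $[\omega_D^\vee]\equiv[\mO_D]+\sum_{C\subset D}[\mO_C]$ modulo point classes. For $D'$ a connected union of such components meeting along curves or points, a Mayer--Vietoris argument assembles these local identifications so that every exceptional curve $C\subset D'$ contributes once and each irreducible component of $D'$ contributes once, again modulo point classes.

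The final step is to expand $\theta\bigl(\ph_{\mfk{C}_0}([\mO_C])\bigr)$ and $\theta\bigl(\ph_{\mfk{C}_0}([\mO_D])\bigr)$ in terms of characters using Proposition~\ref{prop:curves} and the tautological relations of Theorem~\ref{thm:craw}. The main obstacle is verifying that the weights $\deg(\mR_\chi|_C)$ in the single-curve formula collapse to $1$ once the individual contributions for all $C\subset D'$ and all components of $D'$ are added together. Concretely, the excess weight $2$ on $\chi^{\otimes 2}$ for a $(1,-3)$-curve meeting a~$\pr^2$ vertex should be absorbed by the $[\mO_D]$ contribution for that $\pr^2$ through the relation $r_\chi^2=r_{\chi^{\otimes 2}}$, and the other cases of Theorem~\ref{thm:craw} govern analogous absorptions at Hirzebruch and del Pezzo vertices. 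Once this bookkeeping is done modulo the regular representation, one recovers precisely the unweighted sum $\sum_{C\subset D'}\sum_{\chi\in\Gig(C)}\theta(\chi)$, and the Craw--Ishii bound completes the proof.
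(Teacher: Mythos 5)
Your strategy is genuinely different from the paper's, and the step you defer to ``bookkeeping'' is exactly where it breaks. The paper never computes $\ph_{\mfk{C}_0}(\omega_{D'}^\vee)$ on the resolution side. It invokes the Craw--Ishii identification of the inequality for $D'$ with $\theta(Q)<0$, where $Q$ is the minimal rigid quotient of $\mO_Z$ for $Z\in D'$; observes that $\rho\subset Q$ precisely when $\mR_\rho|_{D'}$ is trivial, i.e., precisely when $\rho\notin\Gig(C)$ for every $C\subset D'$; and then uses $\theta(\C[G])=0$ to flip $\theta(Q)<0$ into $\theta(\C[G]/Q)>0$. Since $G$ is abelian, each character occurs once in $\C[G]$, so $\C[G]/Q$ contributes exactly the characters of $\bigcup_{C\subset D'}\Gig(C)$, each with coefficient $1$. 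That is the entire proof; no K-theoretic d\'evissage is needed.

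The gap in your version is that the claimed collapse of the weights to $1$ is asserted, not proved, and it fails already in the simplest case. Take $D'=D\cong\pr^2$ marked with $\psi=\chi^{\otimes 2}$, whose three toric curves are the $(1,-3)$-curves marked $\chi$. Your d\'evissage gives $[\omega_D^\vee]\equiv[\mO_D]+3[\mO_L]$ modulo point classes, with $L$ a line. By Proposition~\ref{prop:II}, $\theta(\ph_{\mfk{C}_0}(\mO_L))=2\theta(\psi)+\theta(\chi)+\Sigma$ where $\Sigma=\sum_{\psi'\in\Hirz(\chi)}\theta(\psi')$; and since $r_\psi=r_\chi^2$ forces $\mR_\psi|_D\cong\mO_{\pr^2}(2)$, the relation $[\mO_D]\equiv[\mR_\psi^{-1}|_D]+2[\mO_L]$ together with Lemma~\ref{lem:ci5} gives $\theta(\ph_{\mfk{C}_0}(\mO_D))=5\theta(\psi)+2\theta(\chi)+2\Sigma$. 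Adding these up yields $11\theta(\psi)+5\theta(\chi)+5\Sigma$, not the multiplicity-one functional $\theta(\chi)+\theta(\psi)+\Sigma$ that the proposition asserts and that the rigid-quotient argument produces. So no absorption via Theorem~\ref{thm:craw} can rescue the coefficients: the destabilising class is $[\C[G]/Q]$, which is not the naive pushforward class of $\omega_{D'}^\vee$ decomposed as you propose (the displayed equality in Proposition~\ref{prop:rigid1} should be read as shorthand for the quotient computation, not as a literal identity of K-theory classes to be verified by adjunction). There is also a smaller ambiguity you should resolve: your target double sum counts a character once for each curve whose total $G$-igsaw piece contains it, whereas the correct functional counts each character of the union exactly once. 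The statement you want is true, but the representation-theoretic route through $Q$ is essentially forced.
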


We call the inequalities from Proposition~\ref{prop:rigid} \emph{quotient inequalities}.

\begin{proof} For the sheaf $Q$ to be trivial on~$D'$ it means that $\mR_\rho|_{D'}$ is trivial for all $\rho\subset Q$. Equi\-va\-lently, all torus-invariant $G$-clusters in~$D'$ share the same eigenmonomial $r_\rho$ for each $\rho\subset Q$ or, also equivalently, $\rho\notin\Gig(C)$ for any $C\subset D'$. Reversing the inequality $\theta(Q)<0$ gives $\theta(\C[G]/Q)>0$ and $\C[G]/Q$ contains exactly the characters in~the statement of the result since~$G$ is abelian and so all irreducible representations have multiplicity $1$ in~$\C[G]$.
\end{proof}

The value of Proposition~\ref{prop:rigid} is that it reduces computing inequalities from divisors to computing various total $G$-igsaw pieces, which is the topic of the next section.

\section[Computing characters in~total G-igsaw pieces]{Computing characters in~total $\boldsymbol{G}$-igsaw pieces}\label{sec:comp}

Our motivating question for this section is the following: \textit{how can one determine the characters that appear in~a~total $G$-igsaw piece for a~curve purely from the data of Reid's recipe?} As we shall see, the answer depends somewhat on how $C$ sits inside $G$-Hilb though it is still completely combinatorial.

\subsection{Combinatorial definitions} \label{sec:comb_def}

We start by making some combinatorial definitions. For two points $u,v\in\R^3$ we denote by $[u,v]$, $(u,v)$, $[u,v)$, $(u,v]$ the closed, open, and half-open line segments with endpoints $u$ and $v$. For~example, $u\in [u,v)$ but $v\notin [u,v)$.

Let $C$ be a~$(-1,-1)$-curve marked with $\chi$. Let $\alpha$ be the edge corresponding to $C$ in~the Craw--Reid triangulation. Pick a~point $u_0$ in~the interior of $\alpha$. We~will use $u_0$ to view the $\chi$-chain as an~(embedded) quiver $\Xi_{\chi,u_0}$ as follows. Let the vertices of the quiver be the vertices corresponding to Hirzebruch divisors incident to the $\chi$-chain (including those at the ends if applicable) and include one extra vertex corresponding to $u_0$. Let the edges be the parts of the $\chi$-chain between these divisors with the edge containing $\alpha$ split into two with one on either side of $u_0$. We~orient an edge $\beta\subset\Xi_{\chi,u_0}$ by declaring that the tail is the boundary vertex of $\beta$ closest to $u_0$. Note that the $\chi$-chain is a~tree and so this makes sense. We~show an example of $\Xi_{\chi,u_0}$ for the group $G=\frac{1}{30}(25,2,3)$ with the character $\chi_3$ and the point $\ast$ in~Fig.~\ref{fig:graph}. Note that as an abstract quiver~$\Xi_{\chi,u_0}$ depends only on the edge $\alpha$ or, equivalently, the curve $C$ and so we slightly abuse notation by subsequently denoting it $\Xi_C$.

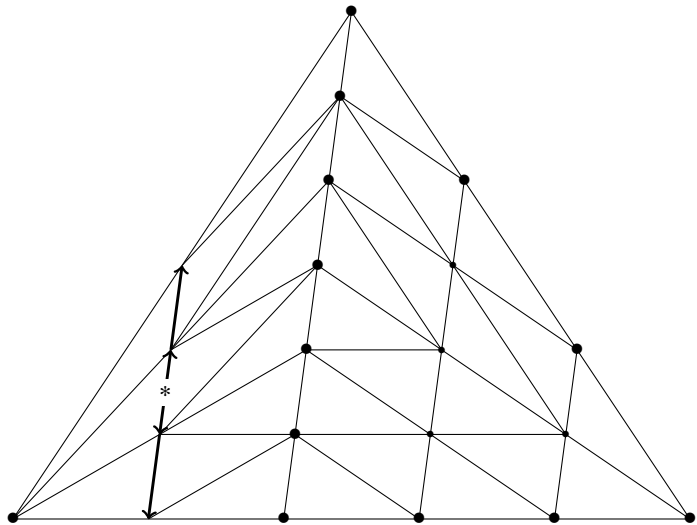
\begin{figure}[h]\centering
\begin{tikzpicture}[scale=0.45]
\node (e1) at (0,9){$\bullet$};
\node (e2) at (10,-6){$\bullet$};
\node (e3) at (-10,-6){$\bullet$};
\node (a1) at (-2/6,9-5/2){$\bullet$};
\node (a2) at (-4/6,9-10/2){$\bullet$};
\node (a3) at (-6/6,9-15/2){$\bullet$};
\node (a4) at (-8/6,9-20/2){$\bullet$};
\node (a5) at (-10/6,9-25/2){$\bullet$};

\draw(e3.center) to (a4.center);
\draw(e2.center) to (a3.center);

\node (xy1) at (10/3,4){$\bullet$};
\node (xy2) at (20/3,-1){$\bullet$};

\node (xz1) at (-5,1.5){};

\node (yz1) at (-10+4,-6){};
\node (yz2) at (-10+8,-6){$\bullet$};
\node (yz3) at (-10+12,-6){$\bullet$};
\node (yz4) at (-10+16,-6){$\bullet$};

\draw (e1.center) to (yz2.center);

\node (iz1) at (-16/3,-1){};
\node (iz2) at (-17/3,-3.5){};
\node[fill=white] (iz15) at (-33/6,-2.25){};

\draw[<-,line width=1.1] (iz1.center) to (iz15.center);
\draw[<-,line width=1.1] (iz2.center) to (iz15.center);

\tiny

\draw (e3.center) to (a2.center);
\draw[<-,line width=1.1] (xz1.center) to (iz1.center);
\draw[<-,line width=1.1] (yz1.center) to (iz2.center);
\draw (xz1.center) to (a1.center);
\draw (iz1.center) to (a1.center);

\draw (iz1.center) to (a3.center);
\draw (iz2.center) to (a3.center);

\draw (iz2.center) to (a5.center);
\draw (yz1.center) to (a5.center);

\node (c1) at (-1+11/3,-1){$\bullet$};
\node (c2) at (-1+22/3,-3.5){$\bullet$};

\node (dp1) at (9/3,1.5){$\bullet$};
\node (dp2) at (7/3,-3.5){$\bullet$};

\draw (xy2.center) to (yz4.center);
\draw (xy1.center) to (yz3.center);
\draw (a5.center) to (c2.center);
\draw (a5.center) to (yz3.center);
\draw (a4.center) to (yz4.center);
\draw (a4.center) to (c1.center);
\draw (a2.center) to (xy2.center);
\draw (a1.center) to (xy1.center);
\draw (a1.center) to (c2.center);
\draw (a2.center) to (c1.center);

\draw (e1.center) to (e2.center);
\draw (yz2.center) to (e2.center);
\draw (e3.center) to (yz2.center);
\draw (e3.center) to (e1.center);
\node[fill=white] (iz15) at (-33/6,-2.25){\large $\ast$};
\end{tikzpicture}
\caption{The embedded quiver $\Xi_{\chi_3,\ast}=\Xi_{C_1}$.}\label{fig:graph}
\end{figure}

\begin{Definition} Let $C$ be a~$\chi$-curve. We~say that a~divisor $D$ along the $\chi$-chain is \emph{downstream} of $C$ if it is a~Hirzebruch divisor. We~say that a~$(-1,-1)$-curve $E$ incident to such a~divi\-sor~$D$ is \emph{downstream} of $C$ if the edge for~$E$ meets the tail of the arrow in~$\Xi_C$ corresponding to the $\chi$-curve in~the same regular triangle as $E$.
\end{Definition}

This is illustrated schematically in~Fig.~\ref{fig:schematic} and concretely on the left side of Fig.~\ref{fig:ahead} for the $3$-curve $C_1$ inside $\frac{1}{30}(25,2,3)$-Hilb whose edge includes the point $\ast$ from Fig.~\ref{fig:graph}. In~Fig.~\ref{fig:schematic} the bold edges indicate the boundary of a regular triangle, hence the central vertex is a~Hirzebruch divisor, and only the two dotted curves are downstream according to the given orientation of the $\chi$-chain. In~Fig.~\ref{fig:ahead} the curves downstream of $C_1$ are dotted.

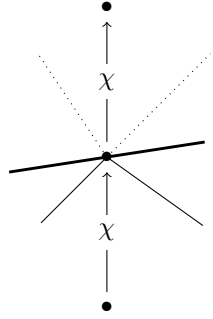
\begin{figure}[h]\centering
\begin{tikzpicture}
\small
\node (a) at (0,2){$\bullet$};
\node (b) at (0,0){$\bullet$};
\node (c) at (0,-2){$\bullet$};
\node (d) at (1.5,1.5){};
\node (e) at (-1,1.5){};
\node (f) at (-1.3,-0.2){};
\node (g) at (1.3,0.2){};
\node (h) at (-1,-1){};
\node (i) at (1.4,-1){};

\draw[<-] (a) to node[fill=white] {$\chi$} (b);
\draw[<-] (b) to node[fill=white] {$\chi$} (c);
\draw[dotted] (b.center) to (d);
\draw[dotted] (b.center) to (e);
\draw (b.center) to (h);
\draw (b.center) to (i);

\draw[line width=1.1pt] (b.center) to (f.center);
\draw[line width=1.1pt] (b.center) to (g.center);
\end{tikzpicture}
\caption{Schematic of curves downstream from a~$(-1,-1)$-curve.}\label{fig:schematic}
\end{figure}

Now suppose that $C$ is a~boundary curve marked with $\chi$. By the construction of the Craw--Reid triangulation there is a~vertex $e_i$ of the junior simplex and an interior vertex $v$ of the junior simplex such that the edge for~$C$ is contained in~the line segment $[e_i,v]$. Let $v_C$ be the vertex furthest from $e_i$ such that this is true. At $v_C$ the $\chi$-chain will change slope. There exists a~vertex~$v_C'$ such that all curves in~the $\chi$-chain between $v_C$ and $v'$ are $(-1,-1)$-curves and hence lie in~the interior of various regular triangles. Note that $v_C=v_C'$ is possible, in~which case there are no $(-1,-1)$-curves along the $\chi$-chain. At $v'_C$ there are two possibilities for the $\chi$-chain: either the $\chi$-chain terminates, or it continues into a~line segment $[v_C',e_j]$ from $v_C'$ to a~vertex $e_j$ of the junior simplex where it then terminates. We~illustrate the situation where the $\chi$-chain terminates at~$e_j$ in~Fig.~\ref{fig:schematic2}. The dashed segment of the $\chi$-chain represents the part between $v_C$ and~$v_C'$, which consists of $(-1,-1)$-curves, and the vertices shown there correspond to Hirzebruch divisors, which occur where the $\chi$-chain passes between two different regular triangles.

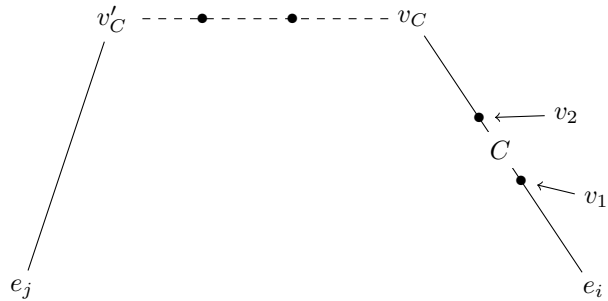
\begin{figure}[h]\centering
\begin{tikzpicture}[scale=0.4]
\small
\node (ei) at (4,-9){$e_i$};
\node (a) at (2-2/5,-6+3/5){$\bullet$};
\node (b) at (0+2/10,-3-3/10){$\bullet$};
\node (c) at (-2,0){$v_C$};
\node (c1) at (-6,0){$\bullet$};
\node (c2) at (-9,0){$\bullet$};
\node (d) at (-12,0){$v'_C$};
\node (ej) at (-15,-9){$e_j$};

\node (l1) at (4.1,-6){$v_1$};
\node (l2) at (3.1,-3.2){$v_2$};

\draw (ei) to (a.center);
\draw (a.center) to node[fill=white] {$C$} (b.center);
\draw (b.center) to (c);

\draw[dashed] (c) to (c1.center) to (c2.center) to (d);

\draw (d) to (ej);

\draw[->] (l1) to (a);
\draw[->] (l2) to (b);
\end{tikzpicture}
\caption{$\chi$-chain for a~boundary curve.}\label{fig:schematic2}
\end{figure}

Similarly to the case where $C$ was a~$(-1,-1)$-curve, we create an embedded quiver $\Xi_C$ supported on the part of the $\chi$-chain between the vertices $v_2$ and $v_C'$. The vertices are the Hirzebruch divisors incident to this part of the $\chi$-chain and the edges are the parts of the $\chi$-chain between these divisors. We~orient the edges by declaring that $v_2$ is the unique source of the quiver and $v_C'$ is the unique sink.

We use the notation of Fig.~\ref{fig:schematic2} in~the following definitions.

\begin{Definition} \label{def:bdry_downstream} Let $C$ be a~boundary curve as depicted in~Fig.~\ref{fig:schematic2}. We~say that a~Hirzebruch divisor $D$ is \emph{downstream} of $C$ if either:
\begin{itemize}\itemsep=0pt
\item the vertex for~$D$ lies in~$[v_2,v_C]$,
\item the vertex for~$D$ lies in~the part of the $\chi$-chain between $v_C$ and $v'_C$ (excluding $v'_C$).
\end{itemize}
Let $E$ be a~$(-1,-1)$-curve marked with a~character $\rho$ and contained in~a~Hirzebruch divisor $D$ downstream of $C$. We~say that $E$ is \emph{downstream} of $C$ if either:
\begin{itemize}\itemsep=0pt
\item the vertex for~$D$ lies in~the line segment $[v_2,v_C]$,
\item the vertex for~$D$ lies between $v_C$ and $v_C'$ and the edge for~$E$ meets the tail of the arrow in~$\Xi_C$ corresponding to the $\chi$-curve in~the same regular triangle as $E$,
\end{itemize}
and if either
\begin{itemize}\itemsep=0pt
\item the $\rho$-chain terminates at~$D$,
\item the edges in~the $\rho$-chain incident to $D$ have different slopes.
\end{itemize}
\end{Definition}

We show a~schematic for the downstream curves relative to $C$ in~Fig.~\ref{fig:schematic3}. The bold arrows represent $\Xi_C$ and the additional edges correspond to sides of the various regular triangles that the $\chi$-chain passes through. The dotted edges are the curves downstream of $C$, and the dashed edges represent two $(-1,-1)$-curves marked with the same character and whose edges have the same slope, hence the dashed curves are not downstream of $C$.

\begin{figure}[h]\centering
\begin{tikzpicture}[scale=0.4]
\small
\node (ei) at (4,-8){$e_i$};
\node (a) at (2-2/5,-6+3/5){$\bullet$};
\node (b) at (0+2/10,-3-3/10){$\bullet$};
\node (c) at (-2,0){$v_C$};
\node (c1) at (-6,0){$\bullet$};
\node (c2) at (-9,0){$\bullet$};
\node (d) at (-12,0){$v'_C$};
\node (ej) at (-15,-8){$e_j$};

\node (l1) at (4.1,-6){$v_1$};
\node (l2) at (3.1,-3.8){$v_2$};

\draw (a.center) to node[fill=white,inner sep=2pt] {$C$} (b.center);
\draw[line width=1.1pt,->] (b) to (c);

\draw[line width=1.1pt,->] (c) to (c1);
\draw[line width=1.1pt,->] (c1) to (c2);
\draw[line width=1.1pt,->] (c2) to (d);

\draw[->] (l1) to (a);
\draw[->] (l2) to (b);

\node (f) at (-6.5,2){};
\node (g) at (-5,-4){};

\draw (f) to (g);

\node (h) at (-8,2){};
\node (i) at (-11,-4){};

\draw (h) to (i);

\node (k) at (0+2/10-2.5,-3-3/10-0.5){};
\node (k') at (0+2/10+2.5,-3-3/10+0.5){};
\node (m) at (0+2/10-2.5,-3-3/10+1){};
\node (m') at (1.5,-3+8/10+1){};

\draw[dashed] (b.center) to (k.center);
\draw[dashed] (b.center) to (k'.center);
\draw[dotted] (b.center) to (m.center);
\draw[dotted] (b.center) to (m'.center);

\node (n) at (-2-2.5,-1){};
\node (o) at (-6-2,-1.75){};
\node (p) at (-9-2,-1.5){};

\draw[dotted] (c) to (n.center);
\draw[dotted] (c1.center) to (o.center);
\draw[dotted] (c2.center) to (p.center);
\end{tikzpicture}
\caption{Schematic of curves downstream of a boundary curve.}\label{fig:schematic3}
\end{figure}
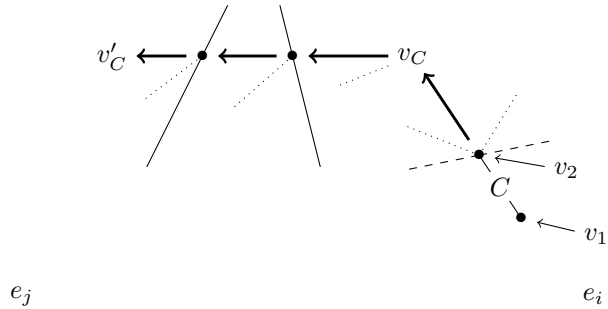

Note that in~Fig.~\ref{fig:schematic2} the divisors for~$v_2$ and $v_C$ are downstream of $C$ but the divisor for~$v_1$ is not. On the right of Fig.~\ref{fig:ahead}, when $G=\frac{1}{30}(25,2,3)$ the divisors $D_2$ and $D_3$ are all the divisors downstream of the $15$-curve $C_2$ whereas $D_1$ is the only divisor along the $15$-chain that is not. We~have bolded the sides of regular triangles in~the triangulation to make it clear that $C_2$ is a~boundary curve, and to clarify which divisors are Hirzebruch divisors. We~also show the curves downstream of $C_2$ dotted.

\begin{figure}[h]\centering
\begin{tikzpicture}[scale=0.36]
\node (e1) at (0,9){$\bullet$};
\node (e2) at (10,-6){$\bullet$};
\node (e3) at (-10,-6){$\bullet$};
\node (a1) at (-2/6,9-5/2){$\bullet$};
\node (a2) at (-4/6,9-10/2){$\bullet$};
\node (a3) at (-6/6,9-15/2){$\bullet$};
\node (a4) at (-8/6,9-20/2){$\bullet$};
\node (a5) at (-10/6,9-25/2){$\bullet$};

\draw[line width=1.1pt](e3.center) to (a4.center);
\draw(e2.center) to (a3.center);

\node (xy1) at (10/3,4){$\bullet$};
\node (xy2) at (20/3,-1){$\bullet$};

\node (xz1) at (-5,1.5){$\bullet$};

\node (yz1) at (-10+4,-6){$\bullet$};
\node (yz2) at (-10+8,-6){$\bullet$};
\node (yz3) at (-10+12,-6){$\bullet$};
\node (yz4) at (-10+16,-6){$\bullet$};

\draw[line width=1.1pt] (e1.center) to (yz2.center);

\node (iz1) at (-16/3,-1){$\bullet$};
\node (iz2) at (-17/3,-3.5){$\bullet$};

\small

\draw (iz1.center) to node[fill=white,inner sep=2pt]{$C_1$} (iz2.center);
\draw[line width=1.1pt](e3.center) to (a2.center);
\draw (xz1.center) to (iz1.center);
\draw (yz1.center) to (iz2.center);
\draw (xz1.center) to (a1.center);
\draw[dotted] (iz1.center) to (a1.center);

\draw (iz1.center) to (a3.center);
\draw (iz2.center) to (a3.center);

\draw[dotted] (iz2.center) to (a5.center);
\draw (yz1.center) to (a5.center);

\node (c1) at (-1+11/3,-1){$\bullet$};
\node (c2) at (-1+22/3,-3.5){$\bullet$};

\node (dp1) at (9/3,1.5){$\bullet$};
\node (dp2) at (7/3,-3.5){$\bullet$};

\draw (xy2.center) to (yz4.center);
\draw (xy1.center) to (yz3.center);
\draw (a5.center) to (c2.center);
\draw (a5.center) to (yz3.center);
\draw (a4.center) to (yz4.center);
\draw (a4.center) to (c1.center);
\draw (a2.center) to (xy2.center);
\draw (a1.center) to (xy1.center);
\draw (a1.center) to (c2.center);
\draw (a2.center) to (c1.center);

\draw (e1.center) to (e2.center);
\draw (yz2.center) to (e2.center);
\draw[line width=1.1pt] (e3.center) to (yz2.center);
\draw[line width=1.1pt] (e3.center) to (e1.center);


\node (e1) at (0+22,9){$\bullet$};
\node (e2) at (10+22,-6){$\bullet$};
\node (e3) at (-10+22,-6){$\bullet$};
\node (a1) at (-2/6+22,9-5/2){$\bullet$};
\node (a2) at (-4/6+22,9-10/2){$\bullet$};
\node[draw,fill=white] (a3) at (-6/6+22,9-15/2){};
\node (a4) at (-8/6+22,9-20/2){$\bullet$};
\node (a5) at (-10/6+22,9-25/2){$\bullet$};

\node[draw,fill=white] (c1) at (-1+11/3+22,-1){};
\node[draw,fill=white] (c2) at (-1+22/3+22,-3.5){};

\draw(e3.center) to (a2.center);
\draw(e3.center) to (a4.center);
\draw[line width=1.1pt] (e2.center) to (c2.center);
\draw[line width=1.1pt] (c1.center) to node[fill=white,inner sep=0pt] {$C_2$} (c2.center);
\draw[line width=1.1pt] (c1.center) to (a3.center);

\node (xy1) at (10/3+22,4){$\bullet$};
\node (xy2) at (20/3+22,-1){$\bullet$};

\node (xz1) at (-5+22,1.5){$\bullet$};

\node (yz1) at (-10+4+22,-6){$\bullet$};
\node (yz2) at (-10+8+22,-6){$\bullet$};
\node (yz3) at (-10+12+22,-6){$\bullet$};
\node (yz4) at (-10+16+22,-6){$\bullet$};

\draw[line width=1.1pt] (e1.center) to (yz2.center);

\node (iz1) at (-16/3+22,-1){$\bullet$};
\node (iz2) at (-17/3+22,-3.5){$\bullet$};

\draw (xz1.center) to (yz1.center);
\draw (xz1.center) to (a1.center);
\draw (iz1.center) to (a1.center);

\draw (iz1.center) to (a3.center);
\draw[dotted] (iz2.center) to (a3.center);

\draw (iz2.center) to (a5.center);
\draw (yz1.center) to (a5.center);

\node (dp1) at (9/3+22,1.5){$\bullet$};
\node (dp2) at (7/3+22,-3.5){$\bullet$};

\draw (xy2.center) to (yz4.center);
\draw (xy1.center) to (yz3.center);
\draw (a5.center) to (c2.center);
\draw (a5.center) to (yz3.center);
\draw (a4.center) to (yz4.center);
\draw[dotted] (a4.center) to (c1.center);
\draw (a2.center) to (xy2.center);
\draw (a1.center) to (xy1.center);
\draw (a1.center) to (c2.center);
\draw[dotted] (a2.center) to (c1.center);

\draw[line width=1.1pt] (e1.center) to (e2.center);
\draw[line width=1.1pt] (e2.center) to (yz2.center);
\draw (e3.center) to (yz2.center);
\draw (e3.center) to (e1.center);

\node[draw,fill=white] (a3) at (-6/6+22,9-15/2){$D_3$};
\node[draw,fill=white] (c1) at (-1+11/3+22,-1){$D_2$};
\node[draw,fill=white] (c2) at (-1+22/3+22,-3.5){$D_1$};
\end{tikzpicture}
\caption{Downstream curves and divisors in~$\frac{1}{30}(25,2,3)$-Hilb.}\label{fig:ahead}
\end{figure}
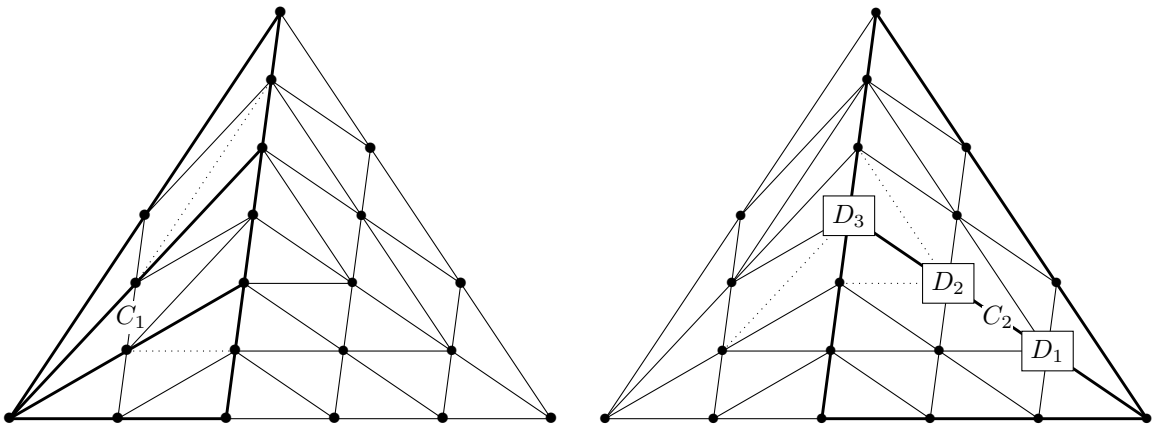

For any exceptional curve $C$ and a~Hirzebruch divisor $D$ downstream of $C$, we denote the set of curves in~$D$ downstream of $C$ by $\mathcal{C}_C(D)$.

We will say that the $\rho$-chain for some character $\rho$ is \emph{broken} at a~vertex $v$ (or~the corresponding divisor) if either the $\rho$-chain terminates at~$v$ or if the edges in~the $\rho$-chain incident to $v$ have different slopes as in~the second part of Definition \ref{def:bdry_downstream}.

Lastly, we define a~character $\chi_\text{dP}(C,D)$ associated to a~$\chi$-curve $C$ and a~del Pezzo divisor $D$ along the $\chi$-chain. Let $\Delta$ be the regular triangle containing the vertex for~$D$, let $v$ be the vertex corresponding to $D$, and let $\alpha$ be the edge of the triangulation corresponding to $C$.

Let $\{p,q,m\}=\{1,2,3\}$. We~denote $x_1=x$, $x_2=y$, $x_3=z$ for convenience. We~assume that $\Delta$ is a~corner triangle with $e_m$ as vertex and one side coming from a~ray out of $e_p$; we will treat the meeting of champions case shortly. Here $\phi_1,\phi_2$ denote the characters marking the del Pezzo divisor at~$v$, and $a$, $b$, $c$, $d$, $e$, $f$ are positive integers coming from the edges in~the Craw--Reid triangulation defining out $\Delta$. More precisely, the two sides incident to $e_m$ have the ratios $x_p^d:x_q^b$ and $x_q^e:x_p^a$ marking them, and the side coming from a~ray out of $e_p$ has ratio $x_m^f:x_q^c$. We~denote by $r=f$ the side length of the regular triangle. Each of the indices $i$, $j$, $k$ ranges from $0,\dots,r$. Consider the local picture for~$p=1$, $q=2$, $m=3$ shown in~Fig.~\ref{fig:gens} adapted from the proof of \cite[Theorem~6.1]{Craw05}, specifically \cite[Fig.~12]{Craw05}, for eigenmonomials near $v$ inside $\Delta$.

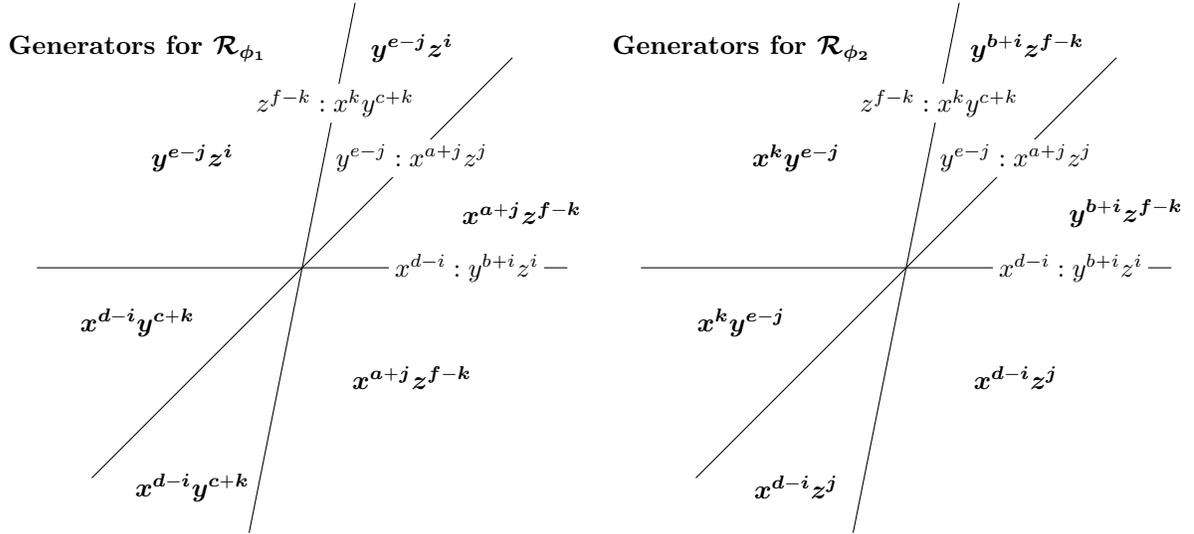
\begin{figure}[h]\centering
\begin{tikzpicture}[scale=0.73]
\small
\node(t) at (-3,4){\textbf{Generators for }$\boldsymbol{\mathcal{R}_{\phi_1}}$};

\node(o) at (0,0){};

\node(a) at (4,4){};
\node(b) at (-5,0){};
\node(c) at (-1,-5){};

\node(d) at (-4,-4){};
\node(e) at (1,5){};
\node(f) at (5,0){};

\draw (o.center) to (a);
\draw (o.center) to (b);
\draw (o.center) to (c);
\draw (o.center) to (d);
\draw (o.center) to (e);
\draw (o.center) to (f);

\node[rectangle, fill=white,inner sep=2pt](l1) at (3,0){$x^{d-i}:y^{b+i}z^i$};
\node[rectangle, fill=white,inner sep=2pt](l2) at (2,2){$y^{e-j}:x^{a+j}z^j$};
\node[rectangle, fill=white,inner sep=2pt](l3) at (0.6,3){$z^{f-k}:x^ky^{c+k}$};

\node(rl1) at (-2,2){$\boldsymbol{y^{e-j}z^i}$};
\node(rl2) at (2,4){$\boldsymbol{y^{e-j}z^i}$};
\node(rl3) at (4,1){$\boldsymbol{x^{a+j}z^{f-k}}$};
\node(rl4) at (2,-2){$\boldsymbol{x^{a+j}z^{f-k}}$};
\node(rl5) at (-2,-4){$\boldsymbol{x^{d-i}y^{c+k}}$};
\node(rl6) at (-3,-1){$\boldsymbol{x^{d-i}y^{c+k}}$};

\node(t) at (8,4){\textbf{Generators for }$\boldsymbol{\mathcal{R}_{\phi_2}}$};

\node(o) at (11,0){};

\node(a) at (15,4){};
\node(b) at (6,0){};
\node(c) at (10,-5){};

\node(d) at (7,-4){};
\node(e) at (12,5){};
\node(f) at (16,0){};

\draw (o.center) to (a);
\draw (o.center) to (b);
\draw (o.center) to (c);
\draw (o.center) to (d);
\draw (o.center) to (e);
\draw (o.center) to (f);

\node[rectangle, fill=white,inner sep=2pt](l1) at (14,0){$x^{d-i}:y^{b+i}z^i$};
\node[rectangle, fill=white,inner sep=2pt](l2) at (13,2){$y^{e-j}:x^{a+j}z^j$};
\node[rectangle, fill=white,inner sep=2pt](l3) at (11.6,3){$z^{f-k}:x^ky^{c+k}$};

\node(rl1) at (9,2){$\boldsymbol{x^ky^{e-j}}$};
\node(rl2) at (13.2,4){$\boldsymbol{y^{b+i}z^{f-k}}$};
\node(rl3) at (15,1){$\boldsymbol{y^{b+i}z^{f-k}}$};
\node(rl4) at (13,-2){$\boldsymbol{x^{d-i}z^j}$};
\node(rl5) at (9,-4){$\boldsymbol{x^{d-i}z^j}$};
\node(rl6) at (8,-1){$\boldsymbol{x^ky^{e-j}}$};
\end{tikzpicture}
\caption{Generators for tautological bundles near $v$.}\label{fig:gens}
\end{figure}

Suppose $\chi=\chi\big(x_p^{d-i}\big)$; that is, if $p=1$, $q=2$, $m=3$ then $\chi$ marks the horizontal chain of curves in~Fig.~\ref{fig:gens}. Following~\cite{Craw05} we denote by $e_ive_j$ the convex part of the junior simplex enclosed by the line segments from $e_i$ to $v$ and from $e_j$ to $v$. Define
\begin{gather*}
\chi_\text{dP}(C,D):=\begin{cases}
\chi\otimes\chi\big(x_m^j\big) & \text{if $\alpha\subset e_pve_q$}, \\
\chi\otimes\chi\big(x_q^{c+k}\big) & \text{if $\alpha\subset e_pve_m$}.
\end{cases}
\end{gather*}
Observe that $\alpha\subset e_qve_m$ is not a~possibility from considering slopes. Similarly, if $\chi=\chi\big(x_q^{e-j}\big)$ define
\begin{gather*}
\chi_\text{dP}(C,D):=\begin{cases}
\chi\otimes\chi\big(x_m^i\big) & \text{if $\alpha\subset e_qve_p$}, \\
\chi\otimes\chi\big(x_p^k\big) & \text{if $\alpha\subset e_qve_m$},
\end{cases}
\end{gather*}
and if $\chi=\chi\big(x_m^{f-k}\big)$ define
\begin{gather*}
\chi_\text{dP}(C,D):=\begin{cases}
\chi\otimes\chi\big(x_p^{a+j}\big) & \text{if $\alpha\subset e_mve_q$}, \\
\chi\otimes\chi\big(x_q^{b+i}\big) & \text{if $\alpha\subset e_mve_p$}.
\end{cases}
\end{gather*}
When $\Delta$ is a~meeting of champions triangle with side ratios $x^d:y^b$, $y^e:z^c$, $x^a:z^f$ we make slight modifications to the above as follows. In~this setting, when $\chi=\chi\big(x^{d-i}\big)$ define
\begin{gather*}
\chi_\text{dP}(C,D):=\begin{cases}
\chi\otimes\chi\big(z^j\big) & \text{if $\alpha\subset e_1ve_2$}, \\
\chi\otimes\chi\big(y^k\big) & \text{if $\alpha\subset e_1ve_3$}.
\end{cases}
\end{gather*}
If $\chi=\chi\big(y^{e-j}\big)$ define
\begin{gather*}
\chi_\text{dP}(C,D):=\begin{cases}
\chi\otimes\chi\big(z^i\big) & \text{if $\alpha\subset e_2ve_1$}, \\
\chi\otimes\chi\big(x^{a+k}\big) & \text{if $\alpha\subset e_2ve_3$},
\end{cases}
\end{gather*}
and if $\chi=\chi\big(z^{f-k}\big)$ define
\begin{gather*}
\chi_\text{dP}(C,D):=\begin{cases}
\chi\otimes\chi\big(x^j\big) & \text{if $\alpha\subset e_3ve_2$}, \\
\chi\otimes\chi\big(y^{b+i}\big) & \text{if $\alpha\subset e_3ve_1$}.
\end{cases}
\end{gather*}

We remark that it follows from Case 4 of the proof of \cite[Theorem~6.1]{Craw05} that $\chi_\text{dP}(C,D)$ is one of the characters marking $D$, and is moreover the unique such character $\phi$ with $r_\chi\,|\, r_\phi$. It~also follows from the construction that $\chi_\text{dP}(C,D)$ takes the same value on~$\chi$-curves in~each of the two connected components of the $\chi$-chain minus the vertex~$v$.

\subsection{Unlocking procedure}

In this subsection we outline the algorithm that we use to compute $\Gig(C)$. We~will spend the remainder of this section proving its validity.

\begin{Algorithm}[unlocking procedure] \label{alg:up} \textnormal{Input:} An exceptional curve $C\subset G\hilb\A^3$ marked with a~character $\chi$ by Reid's recipe.
\begin{enumerate}\itemsep=0pt
\item[\textbf{Ch}] Let $S=\{\chi\}$.
\item[\textbf{dP}] For each del Pezzo divisor $D$ along the $\chi$-chain, add $\chi_\text{dP}(C,D)$ to $S$.
\item[\textbf{H1}] For each Hirzebruch divisor along the $\chi$-chain, add the character marking it to $S$.
\item[\textbf{Re}] For each Hirzebruch divisor $D$ downstream of $C$ and for each $E\in\mathcal{C}_C(D)$, compute $\Gig(E)$ by running the unlocking procedure with $E$ as input.
\item[\textbf{H2}] For each Hirzebruch divisor $D$ downstream of $C$, add the characters in~$\bigcup_{E\in\mathcal{C}_C(D)}\Gig(E)$ to $S$.
\end{enumerate}
\textnormal{Output:} $\Gig(C)=S$.
\end{Algorithm}

We call this the \textit{unlocking procedure} as passing through a~Hirzebruch divisor ``unlocks'' simpler $G$-igsaw puzzles for the curves $E$ downstream of $C$ that one recursively solves in~the step \textbf{\emph{Re}} and then feeds into the total $G$-igsaw piece for~$C$. It~can be visualised as a~flow through the triangulation emanating from the curve $C$ with preferred paths defining its tributaries. We~note that the convoluted definition of $\chi_\text{dP}(C,D)$ is only important for explicit calculations and not for qualitative discussion; the step \textbf{\emph{dP}} states that $\Gig(C)$ contains exactly one of the characters marking each del Pezzo divisor along the $\chi$-chain. We~will often refer to curves downstream of~$C$ as being ``unlocked'' by~$C$.

As an example use case, if $G$-Hilb has a~meeting of champions of side length $0$ with the three champions marked with a~character $\chi$ then for any curve $C$ along the $\chi$-chain the characters in~the $G$-igsaw piece are given by the unlocking procedure applied to the branch of the $\chi$-chain that $C$ lies on, combined with all the characters from (Hirzebruch) divisors along the other two branches of the $\chi$-chain. We~will see an example of this in~Section~\ref{ex:35}.

\subsection{Monomials for divisors}

We will begin by relating the characters marking divisors along the $\chi$-chain to $G$-igsaw pieces for~$\chi$-curves.

\begin{Lemma} \label{lem:div} Suppose $C$ is a~$\chi$-curve. Then $\Gig(C)$ includes exactly one character from each divisor that is along the $\chi$-chain. Moreover, if $D$ is a~del Pezzo divisor along the $\chi$-chain then $\chi_\text{\textnormal{dP}}(C,D)$ is the character marking $D$ that appears in~$\Gig(C)$.
\end{Lemma}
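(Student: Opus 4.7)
The plan is to use Theorem~\ref{thm:craw} to translate the claim into elementary divisibility checks among eigenmonomials. Recall from the alternative characterisation of the total $G$-igsaw piece given earlier that $\Gig(C)$ consists of precisely those characters $\rho$ for which, on an affine chart corresponding to one of the triangles adjacent to the edge for $C$, the generator $r_\rho$ of $\mathcal{R}_\rho$ is divisible by $r_\chi$.

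For a Hirzebruch divisor $D$ along the $\chi$-chain with marking $\psi$, Reid's recipe forces $\psi = \chi^{\otimes 2}$ in the trivalent case, or $\psi = \chi \otimes \chi'$ in the four-, five-, or non-regular-interior six-valent cases, for some other character $\chi'$ marking curves at $D$. The first two bullets of Theorem~\ref{thm:craw} then yield $r_\psi = r_\chi^2$ or $r_\psi = r_\chi \cdot r_{\chi'}$, so $r_\chi \mid r_\psi$ and hence $\psi \in \Gig(C)$. Since such $D$ carries only a single Reid-recipe character, this accounts for exactly one contribution from $D$.

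The substantive case is a del Pezzo divisor $D$ with markings $\phi_1, \phi_2$. Writing $\chi, \chi', \chi''$ for the three characters on the pairs of curves at $D$, the third bullet of Theorem~\ref{thm:craw} only yields the product relation $r_{\phi_1} r_{\phi_2} = r_\chi r_{\chi'} r_{\chi''}$, which by itself does not pin down which of $\phi_1, \phi_2$ is divisible by $r_\chi$. My plan is to invoke the explicit local expressions for $r_{\phi_1}$ and $r_{\phi_2}$ catalogued in Figure~\ref{fig:gens}. For each of the three choices of which pair of curves at $D$ carries the designated character $\chi$, and each of the two relevant regions $e_p v e_q$ or $e_p v e_m$ in which the edge for $C$ can lie, one reads off the exponents of $r_{\phi_1}$ and $r_{\phi_2}$ directly from Figure~\ref{fig:gens}, identifies the unique one divisible by $r_\chi$, and verifies that this character coincides with $\chi_\text{dP}(C,D)$ as defined in Section~\ref{sec:comb_def}. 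The meeting-of-champions variant is handled identically using its modified sub-definitions.

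The main obstacle is simply the bookkeeping in the del Pezzo step: six sub-cases for corner triangles, plus six for meeting of champions, each amounting to a three-variable exponent comparison. The combinatorial definition of $\chi_\text{dP}(C,D)$ was tailored precisely so that Figure~\ref{fig:gens} reproduces it, so modulo the unavoidable casework the matching is forced. Uniqueness --- that $\phi_{3-i}$ does \emph{not} lie in $\Gig(C)$ --- drops out of the same exponent comparisons, since the product $r_{\phi_1} r_{\phi_2}$ has only one factor of $r_\chi$ in its prime-factor expression on the relevant chart.
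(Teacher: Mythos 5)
Your strategy is essentially the paper's: reduce membership in $\Gig(C)$ to divisibility of eigenmonomials by $r_\chi$, handle Hirzebruch divisors via the first two relations of Theorem~\ref{thm:craw}, and handle del Pezzo divisors by reading off the generators of $\mR_{\phi_1}$, $\mR_{\phi_2}$ from Fig.~\ref{fig:gens} and matching the unique one divisible by $r_\chi$ against the definition of $\chi_{\text{dP}}(C,D)$. The paper's proof is exactly this, compressed into citations of Cases~2--4 of the proof of \cite[Theorem~6.1]{Craw05}.

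The one step you should tighten is a chart-locality issue. The generator relations $r_\psi=r_\chi^2$, $r_\psi=r_\chi\cdot r_{\chi'}$, and the expressions in Fig.~\ref{fig:gens} are statements about the charts incident to the \emph{vertex} $v$ of the divisor $D$, whereas membership of $\psi$ in $\Gig(C)$ must be tested on a chart adjacent to the \emph{edge} for $C$, which may lie several edges away from $v$ along the $\chi$-chain; an identity between monomial generators valid near $v$ does not automatically persist as a monomial identity on distant charts. The clean way to bridge this --- and the reason the paper cites the degree computations rather than the generator relations --- is that Cases~2--4 of the proof of \cite[Theorem~6.1]{Craw05} compute $\deg(\mR_\psi|_{C'})$ on \emph{every} $\chi$-curve $C'$ of the chain (equal to $1$ in the Hirzebruch case, and equal to $1$ for exactly one of $\phi_1,\phi_2$ in the del Pezzo case); combined with $\deg (\mR_\rho|_C)=\max\{k\colon r_\chi^k\,|\,r_\rho\}$ on charts adjacent to $C$, this gives the divisibility where you need it. Alternatively, you can read Theorem~\ref{thm:craw} as a relation in $\operatorname{Pic}$ and use nonnegativity of the degrees of tautological bundles on $G\hilb\A^3$ to conclude $\deg(\mR_\psi|_C)\geq\deg(\mR_\chi|_C)=1$. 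With that supplement, your casework for the del Pezzo step is exactly the intended verification, and your uniqueness argument is the degree count $\deg(\mR_{\phi_1}|_C)+\deg(\mR_{\phi_2}|_C)=1$ in disguise.
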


That is, $\Gig(C)$ contains the characters marking each Hirzebruch divisor along the $\chi$-chain and precisely one of the two characters marking each del Pezzo surface along the $\chi$-chain.

\begin{proof} Let $D$ be a~Hirzebruch divisor along the $\chi$-chain marked with a~character $\psi$. Cases~2--3 of the proof of \cite[Theorem~6.1]{Craw05} give that $\mR_\psi$ has degree $1$ on a~given $\chi$-curve, and hence it follows that $r_\chi\,|\, r_\psi$. It~follows that any $G$-igsaw piece featuring $r_\chi$~-- such as a~$G$-igsaw piece for~$C$~-- will also feature each $r_\psi$ and so $\psi\in\Gig(C)$. Now let $D$ be a~del Pezzo divisor along the $\chi$-chain marked with characters $\phi_1$, $\phi_2$. It~follows from Case 4 of the proof of \cite[Theorem~6.1]{Craw05} that exactly one of $\mR_{\phi_1}$, $\mR_{\phi_2}$ has degree $1$ on a~given $\chi$-curve and so $r_\chi\,|\, r_{\phi_1}$ or $r_\chi\,|\, r_{\phi_2}$ but not both. It~follows that exactly one of $\phi_1$, $\phi_2$ lie in~$\Gig(C)$. Let $\phi=\chi_\text{dP}(C,D)\in\{\phi_1,\phi_2\}$. As~noted above, it follows from Case 4 of the proof of \cite[Theorem~6.1]{Craw05} that $r_\chi\,|\, r_\phi$ and so $r_\phi$ is the unique character marking $D$ that appears in~$\Gig(C)$.
\end{proof}

Lemma~\ref{lem:div} gives an effective way of finding the characters in~$\Gig(C)$ coming from divisors. However, this does not usually supply all characters in~$\Gig(C)\setminus\{\chi\}$.

\subsection{Counting characters}

Our method for showing that Algorithm \ref{alg:up} is valid for a~curve $C$ is to source many characters from divisors (as discussed in~the previous subsection) and from curves (coming next) that feature in~$\Gig(C)$ and to then count how many characters are actually in~$\Gig(C)$ to verify that all characters in~the total $G$-igsaw piece have been located. To move towards this second aim we cite a~lemma of Craw--Ishii.

\begin{Lemma}[{\cite[Lemma~9.1]{CrawIshii04}}] \label{lem:ci} A character $\chi$ marks a~torus-invariant compact divisor $D\subset G\hilb\A^3$ iff $r_\chi$ is in~the socle of every $G$-cluster corresponding to a~torus-fixed point in~$D$.
\end{Lemma}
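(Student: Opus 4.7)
The plan is to pass between the intrinsic representation-theoretic notion of socle and the combinatorial data of the Craw--Reid triangulation. Let $v$ be the interior vertex corresponding to $D$. The torus-fixed points of $D$ are in bijection with the triangles $\tau$ of the Craw--Reid triangulation having $v$ as a vertex, and each such $\tau$ determines a monomial $G$-cluster $Z_\tau$ whose basis is the collection of eigenmonomials $\{r_{\rho,\tau}\}_{\rho\in\on{Irr}(G)}$, readable directly off of the three edges of $\tau$. The socle of $H^0(\mO_{Z_\tau})$ consists of those eigenmonomials $r_{\rho,\tau}$ for which each of $x\cdot r_{\rho,\tau}$, $y\cdot r_{\rho,\tau}$, $z\cdot r_{\rho,\tau}$ lies outside the monomial basis. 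So the lemma reduces to a local combinatorial statement at $v$: identify the eigenmonomials that are ``corner monomials'' simultaneously for every $Z_\tau$ in the star of $v$.

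For the forward direction, suppose $\chi$ marks $D$. Theorem~\ref{thm:craw} expresses $r_\chi$ as $r_\psi^2$, as $r_\psi\cdot r_{\psi'}$, or via the relation $r_\chi r_{\chi'}r_{\chi''}=r_{\phi_1}r_{\phi_2}$ according as $v$ is trivalent, $4$/$5$/$6$-valent outside a regular triangle, or $6$-valent inside one. Because the curves marked by $\psi,\psi',\dots$ all pass through the torus-fixed point $Z_\tau$, the generator $r_{\psi,\tau}$ is the local monomial associated to one of the edges of $\tau$ emanating from $v$, as exhibited in Figure~\ref{fig:gens}. A direct case check then shows that the product $r_\chi=r_{\psi,\tau}\cdot r_{\psi',\tau}$ is the corner of $Z_\tau$ reached by pushing along two of the rays through $v$, so multiplying by any of $x,y,z$ moves outside the basis of $Z_\tau$. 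Hence $r_\chi\in\on{soc}(H^0(\mO_{Z_\tau}))$ for every $\tau$ at $v$.

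For the converse, suppose $r_\chi\in\on{soc}(H^0(\mO_{Z_\tau}))$ for all $\tau$ at $v$. Since $H^0(\mO_{Z_\tau})\cong\C[G]$ as $G$-modules, each character has multiplicity one in $Z_\tau$, and the socle monomials are combinatorially determined by the edges of $\tau$. Intersecting these socles over all $\tau$ at $v$ is very restrictive and we handle the three cases by valency. For a trivalent vertex, the three $G$-clusters share a unique socle monomial, which is $r_\chi^{\otimes 2}$ where $\chi$ is the common marking of the incident curves. For the Hirzebruch case, tracking how the two pairs of marked curves dictate the common corner forces the common socle monomial to equal $r_\chi r_{\chi'}$. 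In the del Pezzo case, two common socle monomials $r_{\phi_1},r_{\phi_2}$ emerge, corresponding to the two $G$-invariant maps from $D$ to $\mathbb{P}^2$ and satisfying $\chi\otimes\chi'\otimes\chi''=\phi_1\otimes\phi_2$. The main obstacle is precisely this last case: both the enumeration of simultaneous socle monomials across the six incident triangles and their identification with $\phi_1$, $\phi_2$ require careful local bookkeeping, which is exactly what the definitions behind $\chi_{\on{dP}}(C,D)$ and the generator table of Figure~\ref{fig:gens} encode.
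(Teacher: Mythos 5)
The paper does not prove this lemma at all: it is imported verbatim as \cite[Lemma~9.1]{CrawIshii04}, so there is no in-paper argument to compare against. Your reconstruction follows the route one would expect (and essentially the route of the original source): reduce to the observation that for a monomial $G$-cluster $Z_\tau$ the socle is spanned by the corner monomials $m$ with $xm,ym,zm$ all outside the monomial basis, and then match these corners against the characters assigned to the vertex $v$ by Reid's recipe using Theorem~\ref{thm:craw} and the local generator tables of Fig.~\ref{fig:gens}. The forward direction as you set it up is fine in the trivalent and Hirzebruch cases, though note that in the del Pezzo case the formula $r_\chi=r_{\psi,\tau}\cdot r_{\psi',\tau}$ you invoke does not apply: the relation there is $r_\chi r_{\chi'}r_{\chi''}=r_{\phi_1}r_{\phi_2}$, which does not exhibit $r_{\phi_i}$ as a product of two edge generators, so the ``corner reached by pushing along two rays'' heuristic must be replaced outright by the explicit monomials $y^{e-j}z^i$, $x^{a+j}z^{f-k}$, etc., from Fig.~\ref{fig:gens}. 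You acknowledge this at the end, so I count it as deferred bookkeeping rather than an error.

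The genuine gap is in the converse. You assert that intersecting the socles over all $\tau$ at $v$ leaves exactly the characters marking $D$, but the socle of a single $Z_\tau$ also contains the eigenmonomials of characters marking the divisors at the \emph{other} vertices of $\tau$ (by your own forward direction), and a priori possibly further corners of the staircase. To finish you must show that each such extraneous socle element fails to be in the socle of at least one other triangle at $v$ --- e.g.\ by exhibiting, for a character $\rho$ marking $D_w$ with $w\neq v$, a triangle $\tau'$ at $v$ in which $x\cdot r_{\rho,\tau'}$ (or $y$, or $z$) remains in the monomial basis, or by a counting argument bounding the number of simultaneous corners via Nakamura's Unique Valley Lemma. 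As written, ``very restrictive'' is a claim, not an argument, and it is precisely the half of the lemma that does the work in the applications (the counting of socle elements in Section~3.4 of the paper leans on exactly this direction).
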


Select a~$(-1,-1)$-curve $C$ marked with $\chi$. This lies in~two del Pezzo divisors from the endpoints of the corresponding line segment. From Lemma~\ref{lem:div} we see that $r_\chi$ divides exactly two of the monomials in~the character spaces labelling these two divisors. Suppose $\tau$ is a~$\chi$-triangle neighbouring $C$. By the shape of the ratios in~Fig.~\ref{fig:gens} we can assume that $r_\chi$ is not a~power of a single variable. The Unique Valley Lemma~\cite[Lemma~3.3]{Nakamura01} of Nakamura implies that $r_\chi$ divides exactly two elements of the socle of the torus-invariant $G$-cluster $Z_\tau$ corresponding to~$\tau$. Lemma~\ref{lem:ci} implies that the elements in~the socle of $Z_\tau$ that $r_\chi$ divides correspond exactly to these two characters labelling the neighbouring del Pezzo divisors. These are the outermost monomials in~the $G$-igsaw piece for~$C$ on~$\tau$, so that knowing them will allow us to count how many characters appear in~$\Gig(C)$.

Using this observation we will first prove the validity of the unlocking procedure for curves inside regular triangles (i.e.~those able to define flops, or $(-1,-1)$-curves) before justifying the procedure for the other exceptional curves.

\subsection[(-1,-1)-curves]{$\boldsymbol{(-1,-1)}$-curves}

We consider four cases covering all $(-1,-1)$-curves in~$G$-Hilb based on the different ratios labelling edges in~Fig.~\ref{fig:gens}. For this subsection denote $x_1=x$, $x_2=y$, $x_3=z$. We~will use indices $\{p,q,r\}=\{1,2,3\}$ to symmetrise the discussion. A $(p,q)$\emph{-triangle} is an $e_p$-corner triangle with one edge coming from a~straight line out of $e_q$.
\begin{itemize}\itemsep=0pt
\item Type \texttt{Ix}: curves in~the interior of a $(r,p)$-triangle with ratios $x_p^{d-i}:x_q^{b+i}x_r^i$.
\item Type \texttt{Iy}: curves in~the interior of an~$(r,p)$-triangle with ratios $x_q^{e-j}:x_p^{a+j}x_r^j$.
\item Type \texttt{Iz}: curves in~the interior of an~$(r,p)$-corner triangle with ratios $x_r^{f-k}:x_p^kx_q^{c+k}$.
\item Type \texttt{Ic}: curves in~the interior of the meeting of champions triangle (if existent).
\end{itemize}

We will treat each of these types of $(-1,-1)$-curves but will specialise to the case $p=1$, $q=2$, $m=3$, which suffices to cover all possibilities by symmetry. Fix a~$(3,1)$-triangle $\Delta$.

\subsubsection{Type \texttt{Iy} curves}

We consider the edges in~the interior of $\Delta$ marked with ratios of the form $y^{e-j}:x^{a+j}z^j$; that is, of Type \texttt{Iy}. The analysis from Section~\ref{sec:comb_def} gives a~precise description of the socle of the $G$-clusters corresponding to nearby torus-invariant points as depicted in~Fig.~\ref{fig:gensy} and hence we identify the total $G$-igsaw pieces for such $\chi$-curves. The only additional calculation required is of the monomials $r_{\phi_0}$ and $r_{\phi_m}$ for the characters at the endpoints. Consider $r_{\phi_m}$. The ratio marking the side of $\Delta$ containing the vertex marked with $\phi_m$ is $z^f:y^c$. It~follows that $y^c\,|\, r_{\phi_m}$ on~$\Delta$ but then it cannot be the case that $r_\chi$ divides $r_{\phi_m}$ since $r_\chi=x^{a+j}z^j$ for some basic triangles in~$\Delta$. It~follows from Section~\ref{sec:rr} that $r_{\phi_m}$ is given by $x^{a+j}y^c$ in~the basic triangle where it is displayed in~Fig.~\ref{fig:gensy}. A similar argument applies to compute $r_{\phi_0}$.

\begin{figure}[h]\centering
\begin{tikzpicture}[scale=0.65]
\node[fill=white,inner sep=2pt](b) at (-5,0){\small$\phi_0$};

\node[fill=white,inner sep=2pt] (o1) at (0,0){\small$\begin{matrix}
\phi_1 \\
\phi_2\end{matrix}$};
\node(a) at (2,2){};
\node(c) at (-0.5,-2.5){};
\node(d) at (-2,-2){};
\node(e) at (0.5,2.5){};

\node (up) at (-3,2){};
\node (down) at (-7,-2){};

\draw (b) to (up);
\draw (b) to (down);

\draw (o1) to (a);
\draw (o1) to (b);
\draw (o1) to (c);
\draw (o1) to (d);
\draw (o1) to (e);

\node[fill=white,inner sep=2pt](o2) at (5,0){\small$\begin{matrix}
\phi_3 \\
\phi_4\end{matrix}$};
\node(a) at (7,2){};
\node(c) at (4.5,-2.5){};
\node(d) at (3,-2){};
\node(e) at (5.5,2.5){};

\draw (o1) to (o2);
\draw (o2) to (a);
\draw (o2) to (c);
\draw (o2) to (d);
\draw (o2) to (e);

\node (ell) at (8.5,0){$\dots$};

\node[fill=white,inner sep=2pt](o3) at (11.5,0){\small$\begin{matrix}
\phi_{m-2} \\
\phi_{m-1}\end{matrix}$};
\node(a) at (13.5,2){};
\node(c) at (11,-2.5){};
\node(d) at (9.5,-2){};
\node(e) at (12,2.5){};

\node(b) at (9,0){};
\node(f) at (16.5,0){\small$\phi_m$};

\node (up) at (16,-2.5){};
\node (down) at (17,2.5){};

\draw (o3) to (a);
\draw (o3) to (b);
\draw (o3) to (c);
\draw (o3) to (d);
\draw (o3) to (e);
\draw (o3) to (f);

\draw (o2) to (ell);
\draw (ell) to (o3);

\draw (f) to (up);
\draw (f) to (down);

\node (l1) at (-3.4,-1.1){\footnotesize$\begin{array}{l}
r_{\phi_0}=y^bz^j \\
r_{\phi_1}=x^{a+j}z^{j+1}\end{array}$};
\node (l2) at (1.9,-1.1){\footnotesize$\begin{array}{l}
r_{\phi_2}=x^{d-1}z^j\\
r_{\phi_3}=x^{a+j}z^{j+2} \end{array}$};
\node (l3) at (6.3,-2.7){\footnotesize$\begin{array}{l}
r_{\phi_4}=x^{d-2}z^j \\
r_{\phi_5}=x^{a+j}z^{j+3}\end{array}$};
\node (l2) at (7.9,-1.1){\footnotesize$\begin{array}{l}
r_{\phi_{m-3}}=x^{d-(f-j-2)}z^j \\
r_{\phi_{m-2}}=x^{a+j}z^{f-1}\end{array}$};
\node (l2) at (13.9,-1.1){\footnotesize$\begin{array}{l}
r_{\phi_{m-1}}=x^{d-(f-j-1)}z^j \\
r_{\phi_m}=x^{a+j}y^c\end{array}$};
\end{tikzpicture}
\caption{Generators of eigenspaces along a~$\chi(x^{a+j}z^j)$-chain inside a~regular triangle.}\label{fig:gensy}
\end{figure}
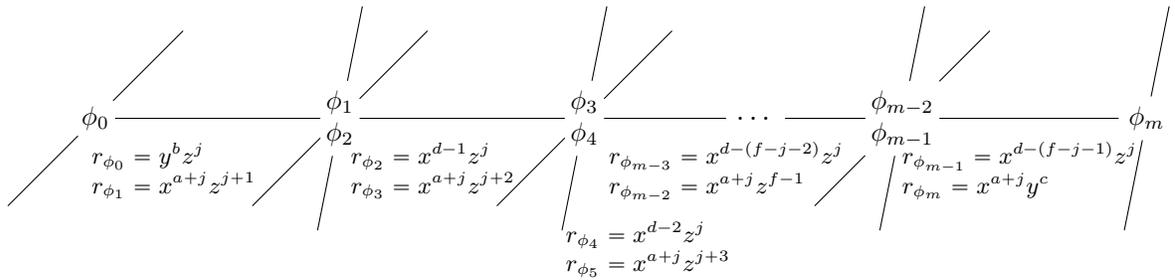

\begin{Lemma} \label{lem:piey} A total $G$-igsaw piece for a~$\chi$-curve of Type \texttt{Iy} on a~$\chi$-triangle chosen so that in~the coordinates used above $r_\chi=x^{a+j}z^j$ is
\begin{gather*}
\begin{array}{ccccccccc}
& & & r_\chi & xr_\chi & \dots & x^{f-i-j-1}r_\chi\\
& & zr_\chi \\
& \iddots \\
z^ir_\chi
\end{array}
\end{gather*}
where the curve corresponds to the edge whose endpoints are the intersection of the $\chi$-chain with the lines marked with $x^{d-i}:y^{b+i}z^i$ and $x^{d-i-1}:y^{b+i+1}z^{i+1}$. Moreover, the $\chi$-chain does not continue outside of this regular triangle. In~particular, $\Hirz(\chi)=\emptyset$.
\end{Lemma}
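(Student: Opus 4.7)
The plan is to directly characterise the total $G$-igsaw piece via its alternative description as the set of monomials in the $G$-graph of $Z_\tau$ that are divisible by the eigenspace generator $r_\chi=x^{a+j}z^j$, where $\tau$ is the chosen triangle adjacent to $C$. Since $r_\chi$ contains no factor of $y$, any such monomial lies in the $(x,z)$-slice of the Newton polygon through $r_\chi$, so one only needs to enumerate admissible exponent vectors in this slice. The L-shape then consists of the rays $x^\ell r_\chi$ and $z^k r_\chi$ growing out of $r_\chi$.

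First I would pin down the two \emph{tips} of the L-shape. By the Unique Valley Lemma combined with Lemma~\ref{lem:ci}, as noted in the paragraph preceding this subsection, $r_\chi$ divides exactly two socle elements of $Z_\tau$, and these correspond to the characters $\chi_\text{dP}(C,D)$ marking the two del Pezzo divisors $D$ at the endpoints of the edge for $C$. Reading off the definition of $\chi_\text{dP}$ together with the generator formulas from Fig.~\ref{fig:gens} identifies these socle monomials as $x^{f-i-j-1}r_\chi$ and $z^i r_\chi$. Lemma~\ref{lem:div} and Theorem~\ref{thm:craw} then force each intermediate character $\phi$ marking a del Pezzo on the $\chi$-chain to have its generator in $Z_\tau$ of the form $x^\ell r_\chi$ or $z^k r_\chi$, which fills in the two arms of the L between $r_\chi$ and the two tips. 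To rule out monomials of the form $y^s r_\chi$ with $s\geq 1$, one verifies via Theorem~\ref{thm:craw} that the canonical generator of $\chi\otimes\chi(y^s)$ in $Z_\tau$ involves a non-trivial $y$-power inherited from outside the $\chi$-chain and so fails to be a multiple of $r_\chi$.

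For the final assertion that the $\chi$-chain does not continue outside the regular triangle $\Delta$, I would argue as follows. The character $\chi=\chi(x^{a+j}z^j)=\chi(y^{e-j})$ arises from the specific interior ratio $y^{e-j}:x^{a+j}z^j$ of $\Delta$, and by the Craw--Reid construction of the triangulation this ratio only marks edges lying in the interior of $\Delta$. Hence the chain terminates upon meeting the boundary of $\Delta$, and all vertices strictly in the interior of the chain are interior to $\Delta$ and therefore $6$-valent del Pezzo. Thus no Hirzebruch divisor sits in the interior of the chain and $\Hirz(\chi)=\emptyset$.

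The main obstacle is the middle step: verifying that every lattice point in the proposed L is genuinely realised by an eigenspace generator in $Z_\tau$, rather than being replaced by a non-multiple of $r_\chi$. Conceptually this follows from the socle-divisibility count together with the step-by-step application of Theorem~\ref{thm:craw} along the chain, but executing it requires careful bookkeeping: each intermediate del Pezzo vertex contributes exactly one character of its pair to $\Gig(C)$ (the other generator having the ``wrong'' shape, namely $x^{d-k}z^j$ rather than $x^{a+j}z^{j+k}$ in the notation of Fig.~\ref{fig:gensy}), and one must check that the character selected by $\chi_\text{dP}$ consistently provides the next monomial along the arm of the L in $Z_\tau$.
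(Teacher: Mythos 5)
Your treatment of the L-shape itself is essentially the paper's argument: the paper likewise pins down the two socle elements of $Z_\tau$ divisible by $r_\chi$ via the Unique Valley Lemma together with Lemma~\ref{lem:ci}, and reads the arms off the explicit eigenmonomial generators recorded in Fig.~\ref{fig:gensy}. (Your separate step ruling out $y^sr_\chi$ is redundant once the two socle tips are identified as $x^{f-i-j-1}r_\chi$ and $z^ir_\chi$: a $G$-graph is closed under taking divisors, so every monomial of the total $G$-igsaw piece divides one of these two maximal elements, and neither involves $y$.)

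The genuine gap is in your proof of the final assertion that the $\chi$-chain terminates at the boundary of $\Delta$. You argue that the ratio $y^{e-j}:x^{a+j}z^j$ ``only marks edges lying in the interior of $\Delta$'' and conclude that the chain stops there. This does not follow: the $\chi$-chain is by definition the set of all edges marked with the \emph{character} $\chi$, not with a fixed ratio, and chains routinely change slope --- hence change ratio --- at a vertex while retaining the same character; see for instance the $5$-chain and the $15$-chain for $\frac{1}{30}(25,2,3)$ in Fig.~\ref{fig:30t}, each of which consists of segments of different slopes. So even granting that no edge outside $\Delta$ carries this particular ratio (itself an unproved assertion), the chain could a priori continue past the boundary vertices of $\Delta$ along a different line. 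The paper closes this off with Theorem~\ref{thm:craw}: two $\chi$-edges meeting at a vertex marked $\psi$ would force $r_\chi\,|\,r_\psi$, whereas the monomials at the two boundary vertices of the chain are $r_{\phi_0}=y^bz^j$ and $r_{\phi_m}=x^{a+j}y^c$, neither of which is divisible by $r_\chi=x^{a+j}z^j$. Some such divisibility argument is needed; without it the claims that the chain is confined to $\Delta$ and that $\Hirz(\chi)=\emptyset$ are not established.
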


\begin{proof} The calculation of the total $G$-igsaw piece follows immediately from the description of the eigenmonomials in~Fig.~\ref{fig:gensy}. As~noted the $\chi$-chain cannot continue outside of this regular triangle since neither $r_{\phi_0}$ nor $r_{\phi_m}$ are divisible by $r_\chi$ and so Theorem~\ref{thm:craw} implies that there cannot be two edges marked with $\chi$ incident to either boundary vertex.
\end{proof}

Notice that this means that there are $f-j-1$ characters to account for, excluding $\chi$. But this is exactly the number of del Pezzo surfaces along the $\chi$-chain, each of which contributes one character.

\begin{Corollary} For an exceptional curve $C$ of Type \texttt{Iy} $\Gig(C)$ consists exactly of $\chi$ and the characters $\chi_\text{\textnormal{dP}}(C,D)$ for each del Pezzo divisor $D$ along the $\chi$-chain.
\end{Corollary}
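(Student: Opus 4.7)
The plan is to establish the corollary as a direct consequence of Lemma~\ref{lem:div} and Lemma~\ref{lem:piey} via a cardinality argument. The inclusion $\{\chi\} \cup \{\chi_\text{dP}(C,D)\} \subseteq \Gig(C)$ is immediate from Lemma~\ref{lem:div}, which asserts that $\chi \in \Gig(C)$ and that each del Pezzo divisor $D$ along the $\chi$-chain contributes $\chi_\text{dP}(C,D)$ to $\Gig(C)$; since $\Hirz(\chi) = \emptyset$ by Lemma~\ref{lem:piey}, no Hirzebruch character along the chain needs to be accounted for.

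For the reverse inclusion I would count both sides. Lemma~\ref{lem:piey} displays $(f-i-j) + i = f-j$ distinct monomials in the $G$-igsaw piece, and since the $G$-eigenspaces of a $G$-cluster are one-dimensional these yield $f-j$ distinct characters, giving $|\Gig(C)| = f-j$. Reading off Figure~\ref{fig:gensy}, the interior vertices of the $\chi$-chain come in pairs $\{\phi_{2k-1}, \phi_{2k}\}$, and since the final such pair satisfies $r_{\phi_{m-2}} = x^{a+j}z^{f-1} = z^{f-j-1}r_\chi$, the chain has exactly $f-j-1$ interior (and therefore del Pezzo) vertices.

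The remaining step is to verify that the $\chi_\text{dP}(C,D)$ are pairwise distinct and distinct from $\chi$: by the formula in Section~\ref{sec:comb_def}, each such character is of the form $\chi \otimes \chi(z^{i_k})$ on one side of $C$ (or $\chi \otimes \chi(x^{s_\ell})$ on the opposite side), corresponding to distinct entries of the $G$-igsaw piece of Lemma~\ref{lem:piey}, so the distinctness is immediate. Matching cardinalities $1 + (f-j-1) = f-j = |\Gig(C)|$ then forces equality of the two sets. The main obstacle is the bookkeeping needed to identify, at each interior vertex $v$ of the $\chi$-chain, which of the two characters $\phi_{2k-1}, \phi_{2k}$ marking the del Pezzo at $v$ is actually picked out by $\chi_\text{dP}(C,D)$; this is governed by which side of $v$ the edge $\alpha$ corresponding to $C$ lies on, as recorded in the piecewise definition of $\chi_\text{dP}$ in Section~\ref{sec:comb_def}.
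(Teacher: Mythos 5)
Your proof is correct and takes essentially the same route as the paper: the inclusion is exactly Lemma~\ref{lem:div}, and the paper's own justification is precisely the cardinality match $1+(f-j-1)=f-j=|\Gig(C)|$ read off from Lemma~\ref{lem:piey} together with the count of del Pezzo vertices along the $\chi$-chain. Your explicit check that the characters $\chi_\text{dP}(C,D)$ are pairwise distinct (via distinct monomials of the $G$-cluster) is a detail the paper leaves implicit.
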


Observe that this is a~situation in~which there is no recursion necessary since $\Hirz(\chi)=\emptyset$. This is one of the base cases that we will reduce to.

\subsubsection{Type \texttt{Ix} curves}

Suppose now that $C$ is a~$\chi$-curve inside $\Delta$ that is marked with the ratio $x^{d-i}:y^{b+i}z^i$; that is, $C$ is of Type \texttt{Ix}. \cite[Theorem~6.1]{Craw05} yields the identities in~Fig.~\ref{fig:gensx} for eigenmonomials on triangles neighbouring the $\chi$-chain, which allow us to completely describe $G$-igsaw pieces inside regular triangles. In~the following we continue the notation of Fig.~\ref{fig:gens} and let $\kappa=r-(i+1)$.

\begin{figure}[h]\centering
\begin{tikzpicture}[scale=0.65]
\node[fill=white,inner sep=2pt](b) at (-5,0){\small$\phi_0$};

\node[fill=white,inner sep=2pt] (o1) at (0,0){\small$\begin{matrix}
\;\;\;\phi_1^1 \\
\phi_1^2\end{matrix}$};
\node(a) at (2,2){};
\node(c) at (-0.5,-2.5){};
\node(d) at (-2,-2){};
\node(e) at (0.5,2.5){};

\node (up) at (-3,2){};
\node (down) at (-7,-2){};

\draw (b) to (up);
\draw (b) to (down);

\draw (o1) to (a);
\draw (o1) to (b);
\draw (o1) to (c);
\draw (o1) to (d);
\draw (o1) to (e);

\node[fill=white,inner sep=2pt](o2) at (5,0){\small$\begin{matrix}
\;\;\;\phi_2^1 \\
\phi_2^2\end{matrix}$};
\node(a) at (7,2){};
\node(c) at (4.5,-2.5){};
\node(d) at (3,-2){};
\node(e) at (5.5,2.5){};

\draw (o1) to (o2);
\draw (o2) to (a);
\draw (o2) to (c);
\draw (o2) to (d);
\draw (o2) to (e);

\node (ell) at (8.5,0){$\dots$};

\node[fill=white,inner sep=2pt](o3) at (11.5,0){\small$\begin{matrix}
\;\;\;\phi_\kappa^1 \\
\phi_\kappa^2\end{matrix}$};
\node(a) at (13.5,2){};
\node(c) at (11,-2.5){};
\node(d) at (9.5,-2){};
\node(e) at (12,2.5){};

\node(b) at (9,0){};
\node[fill=white,inner sep=2pt](f) at (16.5,0){\small$\phi_m$};

\node (up) at (16,-2.5){};
\node (down) at (17,2.5){};

\draw (o3) to (a);
\draw (o3) to (b);
\draw (o3) to (c);
\draw (o3) to (d);
\draw (o3) to (e);
\draw (o3) to (f);

\draw (o2) to (ell);
\draw (ell) to (o3);

\draw (f) to (up);
\draw (f) to (down);

\node (l1) at (-3.5,-1.2){\small$\begin{array}{l}
r_{\phi_0}=x^az^i \\
r_{\phi_1^1}=x^{d-i}y^{c+\kappa}\end{array}$};
\node (l2) at (2.1,-1.8){\small$\begin{array}{l}
r_{\phi_1^2}=x^{d-i}z\\
r_{\phi_2^1}=x^{d-i}y^{c+\kappa-1} \end{array}$};
\node (l3) at (6.9,-2.9){\small$\begin{array}{l}
r_{\phi_2^2}=x^{d-i}z^2 \\
r_{\phi_5}=x^{d-i}y^{c+\kappa-2}\end{array}$};
\node (l2) at (7.9,-1.2){\small$\begin{array}{l}
r_{\phi_{\kappa-1}^2}=x^{d-i}z^{\kappa-2} \\
r_{\phi_\kappa^1}=x^{d-i}y^{c+1}\end{array}$};
\node (l2) at (13.8,-1.2){\small$\begin{array}{l}
r_{\phi_\kappa^2}=x^{d-i}z^{\kappa-1} \\
r_{\phi_m}=x^{d-i}y^c\end{array}$};
\end{tikzpicture}
\caption{Generators of eigenspaces along a~$\chi$-chain inside a~regular triangle.}\label{fig:gensx}
\end{figure}
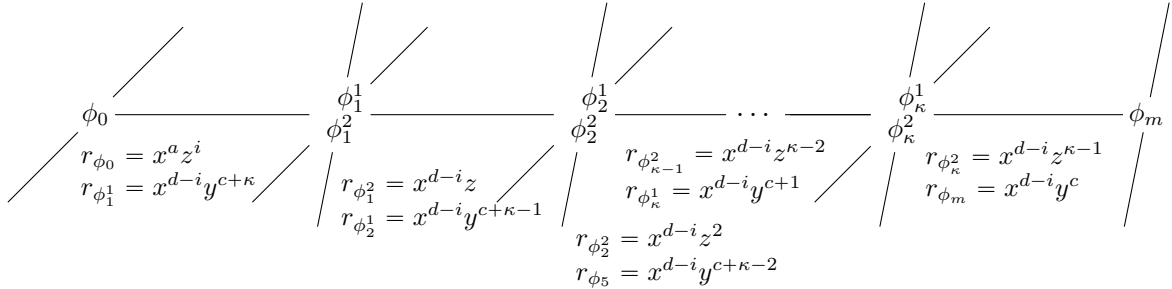

\begin{Lemma} \label{lem:piex} The $G$-igsaw piece for a~$\chi$-curve $C$ of Type \texttt{Ix} on a~$\chi$-triangle chosen so that in~the coordinates used above $r_\chi=x^{d-i}$ is
\begin{gather*}
\begin{array}{ccccccccc}
y^{c+k-1}r_\chi \\
& \ddots \\
& & yr_\chi \\
& & & r_\chi \\
& & zr_\chi \\
& \iddots \\
z^jr_\chi
\end{array}
\end{gather*}
where $C$ corresponds to the edge whose endpoints are the intersection of the $\chi$-chain with the lines marked with $y^{e-j}:x^{a+j}z^j$ and $y^{e-j-1}:x^{a+j+1}z^{j+1}$, and where $i+j+k=r$. Moreover, the $\chi$-chain continues to the right and does not continue to the left of Fig.~{\rm \ref{fig:gensx}}.
\end{Lemma}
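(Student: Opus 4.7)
The plan is to mirror the strategy used for Lemma \ref{lem:piey}, adapted to the L-shaped geometry of the Type \texttt{Ix} case. First I would use Theorem \ref{thm:craw} to propagate generators of the tautological bundles across the vertices of the $\chi$-chain inside $\Delta$, obtaining the explicit eigenmonomials recorded in Figure \ref{fig:gensx}. The outputs I will need are the endpoint generators $r_{\phi_0} = x^a z^i$ and $r_{\phi_m} = x^{d-i} y^c$, together with the paired generators $r_{\phi_s^1}, r_{\phi_s^2}$ at each interior del Pezzo vertex along the chain.

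Next, fix a $\chi$-triangle $\tau$ adjacent to $C$, normalised so that $r_\chi = x^{d-i}$. The two vertices of $\tau$ lying on the $\chi$-chain are the del Pezzo endpoints $v^-, v^+$ of the edge for $C$, and the third vertex lies off the chain. Since $r_\chi = x^{d-i}$ is not a power of a single variable, Nakamura's Unique Valley Lemma \cite[Lemma~3.3]{Nakamura01} implies that exactly two socle elements of $Z_\tau$ are divisible by $r_\chi$, and Lemma \ref{lem:ci} together with Lemma \ref{lem:div} identifies these as one character from each of $v^-$ and $v^+$. Inspecting Figure \ref{fig:gensx} in light of the specific location of $C$ (between the lines marked $y^{e-j}:x^{a+j}z^j$ and $y^{e-j-1}:x^{a+j+1}z^{j+1}$), these two socle generators pin down to $y^{c+k-1} r_\chi$ and $z^j r_\chi$, which are precisely the tips of the L in the statement.

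Once the outermost corners are identified, I would fill out the interior of the igsaw piece by noting that for any character whose generator on $\tau$ is both a proper multiple of $r_\chi$ and a divisor of one of the two socle monomials, Theorem \ref{thm:craw} forces that generator to have the form $y^s r_\chi$ with $0 < s < c+k-1$ or $z^s r_\chi$ with $0 < s < j$. The collection of all such monomials, together with $r_\chi$ itself, is exactly the L-shape displayed in the statement, so the total $G$-igsaw piece for $C$ on $\tau$ is determined. For the final sentence, I would apply Theorem \ref{thm:craw} at the two endpoints of the chain: since $r_{\phi_0} = x^a z^i$ is not divisible by $r_\chi$, no second $\chi$-edge is permitted at the vertex for $\phi_0$, so the chain terminates on the left; whereas $r_{\phi_m} = y^c \cdot r_\chi$ is divisible by $r_\chi$, consistent with and indeed realised by a continuation of the $\chi$-chain out of $\Delta$ on the right.

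The main obstacle will be the bookkeeping at the two del Pezzo endpoints: each of $v^\pm$ carries two characters, and I must verify which of the two is the socle element on the specific triangle $\tau$ (rather than on an adjacent triangle on the opposite side of the chain). This requires tracking the orientation of $\tau$ relative to the chain using the local sextet of generators shown in Figure \ref{fig:gens}, and checking that the choices dictated by this orientation produce exactly the exponents $c+k-1$ and $j$ appearing in the lemma.
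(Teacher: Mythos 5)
Your proposal is correct and follows essentially the same route as the paper: the paper derives the eigenmonomials of Fig.~\ref{fig:gensx} from Theorem~\ref{thm:craw}, reads off the piece via the socle argument (Unique Valley Lemma plus Lemma~\ref{lem:ci}) set up in the ``Counting characters'' subsection exactly as for Lemma~\ref{lem:piey}, and settles the continuation claim by noting $r_\chi \nmid r_{\phi_0}$ while $r_\chi \mid r_{\phi_m}$. Your extra care at the del Pezzo endpoints and the explicit filling-in of the L-shape are just a more detailed rendering of the same argument.
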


\begin{proof} The same argument as for Lemma~\ref{lem:piey} applies, except that $r_\chi$ does divide $r_{\phi_m}$ and so by Theorem~\ref{thm:craw} the $\chi$-chain must continue past the rightmost vertex.
\end{proof}

Notice that the only characters in~any such $G$-igsaw piece that are unaccounted for by divisors along the $\chi$-chain in~the same regular triangle are those for the monomials
\begin{gather*}
yr_\chi,\ \dots,\ y^cr_\chi
\end{gather*}
though $y^cr_\chi=r_{\phi_m}$, which we have seen corresponds to a~Hirzebruch divisor appearing along the $\chi$-chain.

\begin{Lemma} \label{lem:piexb} Suppose $C$ is a~$\chi$-curve of Type \texttt{Ix} such that the $\chi$-chain continues into a~boundary edge of a corner triangle. Then $\Gig(C)$ consists of $\chi$, one character from every del Pezzo divisor along the $\chi$-chain, and the characters marking Hirzebruch divisors along the $\chi$-chain.
\end{Lemma}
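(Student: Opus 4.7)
The containment $\supseteq$ in the claimed description of $\Gig(C)$ is immediate from Lemma \ref{lem:div}, so the content is the reverse inclusion. My strategy is a cardinality match: Lemma \ref{lem:piex} pins down $|\Gig(C)| = c + k + j$, and I would exhibit exactly this many divisor-characters on the $\chi$-chain, matched explicitly to the monomials of the $G$-igsaw piece.

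Working on the triangle where $r_\chi = x^{d-i}$, the $z$-arm $z^l r_\chi$ for $l = 1, \dots, j$ equals $r_{\phi_l^2}$ by Fig.~\ref{fig:gensx}, which is exactly $\chi_\text{dP}(C, \phi_l)$ under the convention of Section \ref{sec:comb_def}, for the del Pezzos on the $z$-side of $C$ inside $\Delta$. Symmetrically, the upper part of the $y$-arm, $y^{c+\kappa+1-l} r_\chi$ for $l = j+1, \dots, \kappa$, matches $r_{\phi_l^1} = \chi_\text{dP}(C, \phi_l)$ for the del Pezzos on the other side, and $y^c r_\chi = r_{\phi_m}$ matches the marking character of the Hirzebruch $\phi_m$ where the chain exits $\Delta$. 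Using $\kappa = r - (i+1)$ and $i+j+k = r$, this accounts for $\chi$ together with all divisor-characters on the chain inside $\Delta$, namely $1 + \kappa + 1 = r - i + 1$ characters.

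The remaining $c - 1$ monomials are $y^s r_\chi$ for $s = 1, \dots, c - 1$, which I would match with Hirzebruch markings of vertices on the $\chi$-chain past $\phi_m$. The hypothesis that the chain continues into a boundary edge of a corner triangle guarantees these vertices exist and are all Hirzebruch, since every interior vertex of a boundary edge of a regular triangle is a Hirzebruch divisor. To identify the markings, I would propagate the divisibility relation $r_\chi \cdot r_{\chi'} = r_{\phi_m}$ from Theorem \ref{thm:craw} at $\phi_m$ inductively along the boundary, one vertex at a time. The main obstacle is precisely this inductive propagation: one must argue that the chain traverses exactly $c - 1$ further Hirzebruchs and that their markings realise the distinct characters $\chi \cdot \chi(y^s)$ for $s = 1, \dots, c - 1$, which reduces to a local analysis at each boundary vertex using Theorem \ref{thm:craw}, together with the ratio $z^f : y^c$ on the side of $\Delta$ containing $\phi_m$ and the analogous ratios on the adjacent regular triangle across the boundary. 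Equating the $(r - i + 1) + (c - 1) = c + k + j$ characters produced this way with $|\Gig(C)|$ closes the $\subseteq$ direction.
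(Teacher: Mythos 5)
Your proposal is correct and follows essentially the same route as the paper: a cardinality match of $\abs{\Gig(C)}=c+j+k$ from Lemma~\ref{lem:piex} against $\chi$ plus the divisor characters supplied by Lemma~\ref{lem:div}, with the monomial identifications read off Fig.~\ref{fig:gensx}. The one step you flag as the main obstacle is dispatched in the paper by a single observation from the Craw--Reid construction --- the corner triangle into which the chain continues has side length $c$, so the boundary part of the $\chi$-chain meets exactly $c$ Hirzebruch divisors (counting $\phi_m$) --- and your proposed inductive propagation of $r_\chi\cdot r_{\chi'}=r_{\phi_m}$ via Theorem~\ref{thm:craw} is not needed, since Lemma~\ref{lem:div} already places each such marking character in $\Gig(C)$ and only the count (together with distinctness of the corresponding monomials) matters.
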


This follows since the corner triangle has side length $c$ and so there are exactly $c$ Hirzebruch divisors along the boundary part of the $\chi$-chain that contribute the remaining $c$ characters to the $G$-igsaw piece. We~say that the curves from Lemma~\ref{lem:piexb} are of Type~\texttt{Ixb}. This is the other base case to which the unlocking procedure reduces. Note that the character in~$\Gig(C)$ from a~del Pezzo divisor $D$ along the $\chi$-chain is by definition $\chi_{\text{dP}}(C,D)$.

We consider the remaining possibilities where the $\chi$-chain merges into the interior of an~$e_2$-cor\-ner triangle or the interior of an~$e_1$-corner triangle.

\begin{Lemma} \label{lem:ix'} Suppose $C$ is a~$\chi$-curve of Type \texttt{Ix} and suppose that the $\chi$-chain continues into the interior of an~$e_2$-corner triangle $\Delta'$. Then $\Gig(C)$ consists of $\chi$, one character from each del Pezzo surface along the $\chi$-chain, the character marking the Hirzebruch divisor $D$ between the two regular triangles, and the characters from the total $G$-igsaw piece of the \texttt{Iy} curve also incident to $D$ inside $\Delta'$.
\end{Lemma}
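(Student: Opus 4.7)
The plan is to reduce the problem to the two base cases already established --- Type \texttt{Iy} curves (Lemma~\ref{lem:piey} and its corollary) and the internal regular-triangle analysis of Lemma~\ref{lem:piex} --- by carefully tracking the $\chi$-chain as it crosses the Hirzebruch divisor $D$ separating $\Delta$ from $\Delta'$. First I would apply Lemma~\ref{lem:piex} inside $\Delta$ to obtain the local shape of a total $G$-igsaw piece on a $\chi$-triangle as a two-branched staircase of monomials $y^a z^b r_\chi$. Combined with Lemma~\ref{lem:div}, this accounts for $\chi$ itself, for exactly one character $\chi_\text{dP}(C,D'')$ from each del Pezzo divisor $D''$ along the chain in $\Delta$, and for the character marking the Hirzebruch divisor $D$ at the rightmost vertex of Fig.~\ref{fig:gensx}.

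Next I would analyse the continuation in $\Delta'$. Because $\Delta'$ is an $e_2$-corner triangle meeting $\Delta$ along an edge incident to $D$, after relabelling coordinates so that $e_2$ plays the role of the corner, the role of the ``alone'' variable in the Type \texttt{Ix} ratio gets exchanged with the role of the variable singled out in a Type \texttt{Iy} ratio: the unique $\chi$-curve $E$ of the continuation inside $\Delta'$ is then of Type \texttt{Iy} in $\Delta'$'s local coordinates. Applying Lemma~\ref{lem:piey} and its corollary to $E$, I would obtain that $\Gig(E)$ consists of $\chi$ together with one character from each del Pezzo divisor along the chain in $\Delta'$, and moreover that the $\chi$-chain terminates inside $\Delta'$ since $\Hirz(\chi)$ contains no further divisors from $\Delta'$. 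Divisibility (Theorem~\ref{thm:craw}) then forces each element of $\Gig(E)$ to lie in $\Gig(C)$, because its generator is obtained from $r_\chi$ by multiplying through by a $G$-invariant ratio coming from the edges the chain traverses. This gives one containment.

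The other containment, and the main obstacle, is to show that no further characters appear in $\Gig(C)$. For this I would combine Lemma~\ref{lem:ci} with the Unique Valley Lemma \cite[Lemma~3.3]{Nakamura01} to pin down the two outermost monomials in a $G$-igsaw piece for $C$: they must be the eigenmonomials corresponding to the two del Pezzo divisors sitting at the two ends of the combined $\chi$-chain in $\Delta \cup \Delta'$ --- one end handled by Lemma~\ref{lem:piex} inside $\Delta$ and the other by the Iy analysis inside $\Delta'$. A direct monomial count along the two-branched staircase, bridged across $D$ by the $G$-invariant ratio labelling the boundary edge shared by $\Delta$ and $\Delta'$, then matches the number of exhibited characters and completes the proof. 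The delicate bookkeeping is precisely the change of monomial representative for $r_\chi$ as one crosses $D$: this is where the combinatorial data of the ratios labelling the edges of $\Delta$ and $\Delta'$ near $D$ must be made fully explicit in order to glue the staircases from Lemma~\ref{lem:piex} and Lemma~\ref{lem:piey} into a single two-branched picture.
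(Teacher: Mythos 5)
There is a genuine gap: you have misidentified the Type \texttt{Iy} curve appearing in the statement. The continuation of the $\chi$-chain into $\Delta'$ is \emph{not} of Type \texttt{Iy} there --- indeed it cannot be, since Lemma~\ref{lem:piey} shows a Type \texttt{Iy} chain never leaves its regular triangle, so a chain entering $\Delta'$ from outside cannot become Type \texttt{Iy} in $\Delta'$. Concretely, if the sides of the $e_2$-corner triangle $\Delta'$ carry the ratios $x^{d'}:z^{b'}$, $z^{e'}:x^{a'}$ and $z^f:y^c$ (the last being the common side with $\Delta$), then the $\chi$-chain continues into $\Delta'$ marked by $x^{d'-i'}:z^{b'+i'}y^{i'}$ with $d'-i'=d-i$, which is again of Type \texttt{Ix} relative to $\Delta'$. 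The curve the lemma refers to is a \emph{different} curve $C'$, marked with a \emph{different} character $\chi'$ by the ratio $z^{e'-j'}:x^{a'+j'}y^{j'}$: it is the Type \texttt{Iy} curve unlocked by $C$ at the Hirzebruch divisor $D$. The crucial step --- absent from your argument --- is the cross-chain divisibility $r_\chi\,|\,r_{\chi'}$ near $C$: using $d'-a'=c$ and $i'+j'=f$ one gets $a'+j'=d-i$, so $r_{\chi'}=x^{d-i}y^{j'}=r_\chi y^{j'}$ on the relevant triangle, whence $\Gig(C')\subseteq\Gig(C)$ (see Fig.~\ref{fig:divx'}). Because you conflate $C'$ with the continuation of the $\chi$-chain, your list of characters omits $\chi'$ and the del Pezzo characters along the $\chi'$-chain, which are exactly the ``characters from the total $G$-igsaw piece of the \texttt{Iy} curve'' in the statement.

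Your closing count would then fail to balance. The correct bookkeeping is: Lemma~\ref{lem:piex} shows $\Gig(C)$ is missing exactly $c$ characters once the divisors along the $\chi$-chain inside $\Delta$ are tallied (those of $yr_\chi,\dots,y^cr_\chi$); the del Pezzo divisors along the $\chi$-chain inside $\Delta'$ together with $D$ contribute $c-i'$ of these, and $\Gig(C')$ contributes the remaining $i'$ (namely $\chi'$ itself and the $i'-1$ del Pezzo divisors along the $\chi'$-chain). Gluing two staircases for the single character $\chi$ across $D$, as you propose, cannot produce these last $i'$ characters, so the number of characters you exhibit would fall short of the socle count you correctly propose to make via Lemma~\ref{lem:ci} and the Unique Valley Lemma.
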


As above, the character in~$\Gig(C)$ from a~del Pezzo divisor $D$ along the $\chi$-chain is $\chi_{\text{dP}}(C,D)$.

\begin{proof} Let $C'$ be the Type \texttt{Iy} curve incident to $D$ in~$\Delta'$. Denote its character by $\chi'$. From Lemma~\ref{lem:piey} the characters in~the total $G$-igsaw piece for~$C'$ are $\chi'$ and one character from each del Pezzo divisor along the $\chi'$-chain inside $\Delta'$. Let the ratios marking sides of $\Delta'$ be
\begin{gather*}
x^{d'}:z^{b'},\quad
z^{e'}:x^{a'},\quad
z^f:y^c,
\end{gather*}
where $a'<d'$ and where $z^f:y^c$ marks the common side with $\Delta$. Let the part of the $\chi$-chain in~$\Delta'$ be marked by the ratio $x^{d'-i'}:z^{b'+i'}y^{i'}$ and so $d'-i'=d-i$. It~follows that the $\chi'$-chain is marked by $z^{e'-j'}:x^{a'+j'}y^{j'}$ where $i'+j'=f$. Using the relation $d'-a'=c$ we see that $a'+j'=d-i$.

Examining the situation explicitly, we see that on the lower $\chi$-triangle neighbouring $C$ one has $r_\chi=x^{d-i}$ and $r_{\chi'}=x^{a'+j'}y^{j'}$ so that $r_\chi\,|\, r_{\chi'}$ near $C$. Moreover, one can see that the zone where $r_{\chi'}$ divides one character from each del Pezzo divisor along the $\chi'$-chain includes this $\chi$-triangle containing $C$ and so these divisibility relations remain. Hence, the $G$-igsaw piece for~$C'$ is contained in~the $G$-igsaw piece for~$C$. The divisibility relations are depicted in~Fig.~\ref{fig:divx'}.

From Lemma~\ref{lem:piex} the total $G$-igsaw piece for~$C$ is missing $c$ characters after counting the characters in~$\Delta$. There are $c-i'$ new characters along the $\chi$-chain corresponding to the del Pezzo divisors along the $\chi$-chain and the boundary Hirzebruch divisor $D$. There are $c-(c-i')-1=i'-1$ divisors along the $\chi'$-chain, making a~contribution of $i'$ characters in~total including $\chi'$ itself. Thus these account for all of the $c$ missing characters.
\end{proof}

\begin{figure}[h]\centering
\begin{tikzpicture}[scale=0.76]
\node (a) at (-4,2){};
\node (v) at (0,2){$\bullet$};
\node (w) at (2,0){$\bullet$};
\node (b) at (3,-1){};
\node (c) at (2,-1){};
\node (d) at (2,1){};
\node (e) at (0,0){};
\node (f) at (3,0){};
\node (g) at (3.2,1.1){};

\draw (c) to (d);
\draw (e) to (f);
\draw (a) to (v.center);
\draw (w.center) to (v.center);
\draw (w.center) to (b);
\draw (v.center) to (g);

\small
\node (lx) at (-4.2,2){$\chi$};
\node (ly) at (2,2.2){$\chi'$};
\node (rel) at (-2,0.75){$\begin{array}{l}
r_\chi=x^{d-i} \\
r_{\chi'}=x^{d-i}y^{j'} \\
r_\chi\,|\, r_{\chi'}\,|\, r_{\phi_i^1}\end{array}$};
\node (l1) at (4,0.8){ $\phi_1^1,\ \phi_1^2$};
\node (ell) at (3.4,-1.2){$\ddots$};
\node (m) at (1,1){};

\draw[->] (l1) to (w);
\draw[->] (ly.south) to (m);

\small

\node (w') at (5,-3){$\bullet$};
\node (b') at (6,-4){};
\node (c') at (5,-4){};
\node (d') at (5,-2){};
\node (e') at (3,-3){};
\node (f') at (6,-3){};

\draw (c') to (d');
\draw (e') to (f');
\draw (ell) to (w'.center);
\draw (w'.center) to (b');

\small
\node (li) at (6,-2){ $\phi_i^1,\ \phi_i^2$};
\draw[->] (li) to (w');
\end{tikzpicture}
\caption{Unlocking for a~Type \texttt{Ix} curve merging into a~$(2,1)$-triangle.}\label{fig:divx'}
\end{figure}
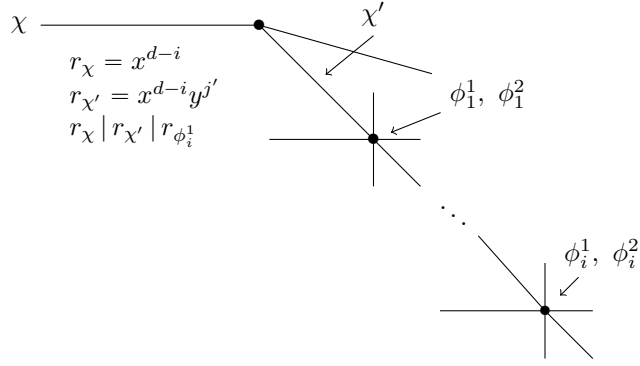

Note that this vindicates the unlocking procedure for such curves, where only one recursion was required to unlock the single Type \texttt{Iy} curve downstream of $C$. The final case to consider is when the $\chi$-chain merges into an $e_1$-corner triangle.

Suppose the $\chi$-chain passes through $n$ $e_1$-corner triangles before entering an $e_2$-corner tri\-angle~$\Delta'$.

Let the ratio $x^{d_m}:y^{b_m}$ mark the edge opposite $e_1$ for the $m$th $e_1$-corner triangle $\Delta_m$ from the left and so $\Delta_m$ has side length $d_m$. Suppose the $\chi$-chain enters $\Delta_m$ at height $i_m$. This means that the $\chi$-chain picks up $d_m-i_m$ divisors from del Pezzo divisors and a~single Hirzebruch divisor inside $\Delta_m$. From analysing local divisibility relations as above, it is clear that $r_\chi$ divides all of the monomials in~the $G$-igsaw pieces for the Type \texttt{Ix} curve incident to the $\chi$-chain and the leftmost Hirzebruch divisor inside each of these regular triangles. See Fig.~\ref{fig:ixc} for a~schematic. We~denote $D_m:=\sum_{q=1}^m d_q$ and $BD_m:=\sum_{q=1}^m(b_q+d_q)$.

\begin{figure}[h]\centering
\begin{tikzpicture}[scale=0.72]
\node (a0) at (0,0){$\bullet$};
\node (a1) at (3,0){$\bullet$};
\node (a2) at (6,0){$\bullet$};
\node (a3) at (9,0){$\bullet$};

\node (e1) at (4,4){};
\node (e2) at (14,-1){};

\node (a-1) at (-1.5,-0.3){$\bullet$};
\node (a0') at (2,2){};
\node (a-1') at (2-4.5,2-0.9){};

\node (b0) at (4-1.5*4,4-1.5*4){};
\node (b1) at (4-1.6*1,4-1.6*4){};
\node (b2) at (4+2*1.6,4-4*1.6){};
\node (b3) at (4+5*1.5,4-4*1.5){};

\node (x0) at (0-1*0.25,-4*0.25){$\bullet$};
\node (x1) at (3+2*0.25,-4*0.25){$\bullet$};
\node (x2) at (6+5*0.25,-4*0.25){$\bullet$};

\node (q0) at (0-1*0.125,-4*0.125){};
\node (q1) at (3+2*0.125,-4*0.125){};
\node (q2) at (6+5*0.125,-4*0.125-0.1){};

\node (p0) at (3,3){};
\node (p1) at (3.8,3.3){};
\node (pd) at (5.4,3){};

\small

\node (C) at (-0.6,0.2){$C$};

\node (ch1) at (-1,-3.5){$\chi(x^{b_1+i-1}z^{f+1})$};
\node (ch2) at (1,-2.7){$\chi(z^{f+1})$};
\node (ch3) at (2.6,-3.5){$\chi(z^{f+d_1+1})$};
\node (ch4) at (5.3,-2.7){$\chi(x^{b_1+i-1}z^{f+d_1+1})$};
\node (ch5) at (8.1,-3.5){$\chi(z^{f+D_{n-1}+1})$};
\node (ch6) at (11.5,-2.7){$\chi(x^{b_{n-1}+i-1}z^{f+D_{n-1}+1})$};

\node (cz0) at (1,4.6){$z^f:y^c$};
\node (cz1) at (4,4.6){$z^{f+d_1}:y^{c-(b_1+d_1)}$};
\node (czd) at (8.2,4.6){$z^{f+D_n}:y^{c-BD_n}$};

\draw (a-1.center) to (a0.center) to (a1.center) to (a2.center) to (a3.center) to (e2.center);

\draw (a0'.center) to (a-1');

\draw (e1.center) to (b0);
\draw (e1.center) to (b1);
\draw (e1.center) to (b2);
\draw (e1.center) to (b3);

\draw (a0.center) to (x0.center);
\draw (a1.center) to (x1.center);
\draw (a2.center) to (x2.center);

\draw[->] (ch1) to (x0);
\draw[->] (ch2) to (q0);
\draw[->] (ch3) to (q1);
\draw[->] (ch4) to (x1);
\draw[->] (ch5) to (q2);
\draw[->] (ch6) to (x2);

\draw[->] (cz0) to (p0);
\draw[->] (cz1) to (p1);
\draw[->] (czd) to (pd);
\end{tikzpicture}
\caption{Unlocking for a~Type \texttt{Ix} curve in~a~series of $e_1$-corner triangles.}\label{fig:ixc}
\end{figure}
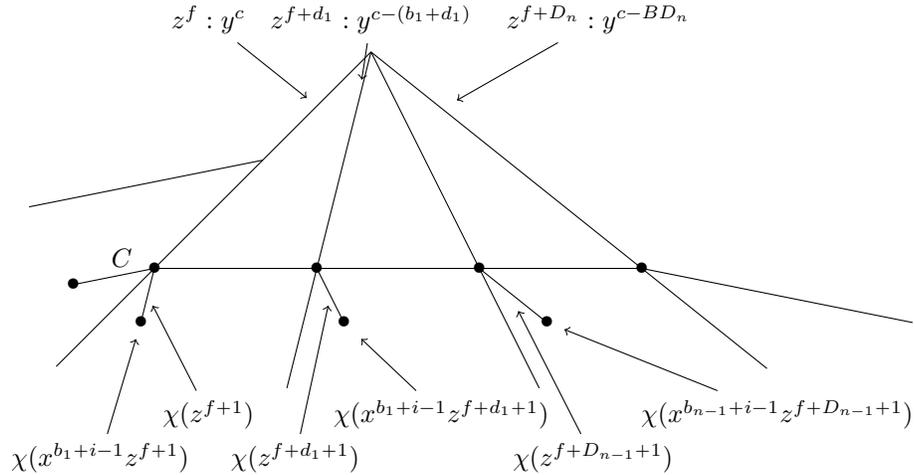

By computing the characters on the nearby del Pezzo divisor, one can tell that these Type \texttt{Ix} curves each have $b_m+i_m$ characters in~their $G$-igsaw pieces, making the total number of cha\-rac\-ters they contribute to the $G$-igsaw piece of $C$
\begin{gather*}
\sum_{q=1}^n(d_q-i_q+b_q+i_q)=\sum_{q=1}^n(b_q+d_q).
\end{gather*}
From \cite[Section~2]{Craw05} the ratios marking the edges from $e_1$ for the $e_1$-corner triangles are of the form
\begin{gather*}
z^{f+\sum_{q=1}^md_q}:y^{c-\sum_{q=1}^m(b_q+d_q)}\qquad\text{for~$m=0,\dots,n$}
\end{gather*}
with the last edge marked by $z^{f+\sum_{q=1}^nd_q}:y^{c-\sum_{q=1}^n(b_q+d_q)}$. In~particular, this means that the $\Delta'$ has side length $c-\sum_{q=1}^n(b_q+d_q)$. Assume the $\chi$-chain continues into a~chain of Type \texttt{Ix} curves in~$\Delta'$. By the same reasoning as for Lemma~\ref{lem:ix'} this produces $c-\sum_{q=1}^n(b_q+d_q)$ new characters in~$\Gig(C)$ coming from $\Delta$. But then we have
\begin{gather*}
\sum_{q=1}^n(b_q+d_q)+c-\sum_{q=1}^n(b_q+d_q)=c
\end{gather*}
characters in~total so far, which exhausts all characters in~$\Gig(C)$ by Lemma~\ref{lem:piex}. Hence in~this case $\Delta'$ is the rightmost regular triangle containing $\chi$-curves. Note that the $\chi$-chain cannot merge into a~chain of Type \texttt{Iy} curves in~$\Delta'$ as such curves cannot escape a~single regular triangle. Also, it is clear from convex geometric considerations that the $\chi$-chain cannot continue into a~chain of Type \texttt{Iz} curves. The only remaining option is that the $\chi$-chain continues into a~chain of boundary curves, thus again producing $c-\sum_{q=1}^n(b_q+d_q)$ new characters from the Hirzebruch divisors along the side of $\Delta'$. In~either case the number of characters coming from $\Delta'$ is exactly the number of characters in~$\Gig(C)$ not accounted for by del Pezzo divisors in~$\Delta$ by Lemma~\ref{lem:piex}. This completes the proof of validity of the unlocking procedure for curves of Type~\texttt{Ix}.

\subsubsection{Type \texttt{Iz} curves}

The third type of curve occurring inside regular triangles is Type \texttt{Iz}: the curves marked by ratios of the form $z^{f-k}:x^ky^{c+k}$ in~the coordinates we have been using for an $e_3$-corner triangle. We~repeat the $G$-igsaw analysis for these curves, represented in~Fig.~\ref{fig:11} with the $\chi=\chi\big(z^{f-k}\big)$-chain dashed.

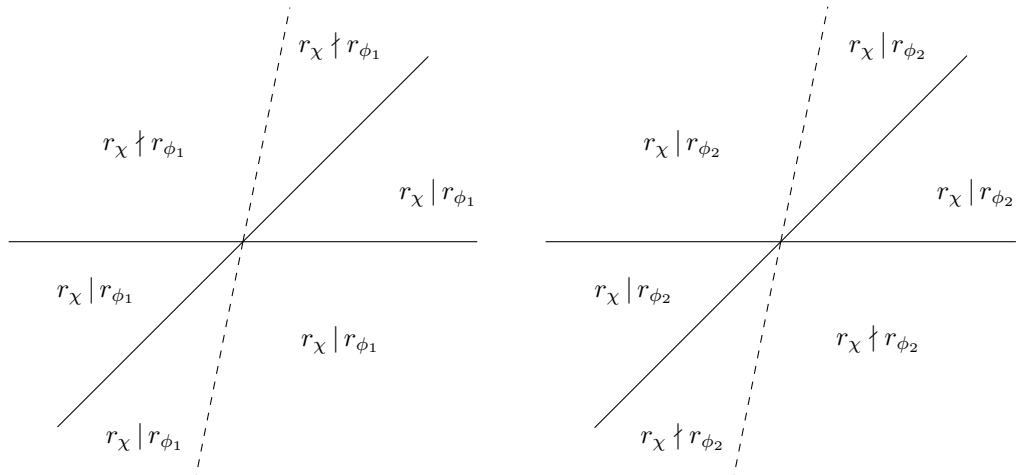
\begin{figure}[h]\centering
\begin{tikzpicture}[scale=0.65]
\node(o) at (0,0){};

\node(a) at (4,4){};
\node(b) at (-5,0){};
\node(c) at (-1,-5){};

\node(d) at (-4,-4){};
\node(e) at (1,5){};
\node(f) at (5,0){};

\draw (o.center) to (a);
\draw (o.center) to (b);
\draw[dashed] (o.center) to (c);
\draw (o.center) to (d);
\draw[dashed] (o.center) to (e);
\draw (o.center) to (f);

\small

\node(rl1) at (-2,2){$r_\chi\not\mid  r_{\phi_1}$};
\node(rl2) at (2,4){$r_\chi \not\mid  r_{\phi_1}$};
\node(rl3) at (4,1){$r_\chi\,|\, r_{\phi_1}$};
\node(rl4) at (2,-2){$r_\chi\,|\, r_{\phi_1}$};
\node(rl5) at (-2,-4){$r_\chi\,|\, r_{\phi_1}$};
\node(rl6) at (-3,-1){$r_\chi\,|\, r_{\phi_1}$};

\node(o) at (11,0){};

\node(a) at (15,4){};
\node(b) at (6,0){};
\node(c) at (10,-5){};

\node(d) at (7,-4){};
\node(e) at (12,5){};
\node(f) at (16,0){};

\draw (o.center) to (a);
\draw (o.center) to (b);
\draw[dashed] (o.center) to (c);
\draw (o.center) to (d);
\draw[dashed] (o.center) to (e);
\draw (o.center) to (f);

\node(rl1) at (9,2){$r_\chi\,|\, r_{\phi_2}$};
\node(rl2) at (13.2,4){$r_\chi\,|\, r_{\phi_2}$};
\node(rl3) at (15,1){$r_\chi\,|\, r_{\phi_2}$};
\node(rl4) at (13,-2){$r_\chi\not\mid  r_{\phi_2}$};
\node(rl5) at (9,-4){$r_\chi\not\mid  r_{\phi_2}$};
\node(rl6) at (8,-1){$r_\chi\,|\, r_{\phi_2}$};
\end{tikzpicture}
\caption{Divisibility relations near $v$.}\label{fig:11}
\end{figure}

As in~all previous cases, exactly one character marking each incident del Pezzo surface has a~monomial divisible by $r_\chi$ and so we can pin down the socle and hence the $G$-igsaw piece for such a~curve.

\begin{Lemma} \label{lem:piez} The $G$-igsaw piece for a~$(-1,-1)$-curve marked with $\chi$ on a~$\chi$-triangle chosen so that in~the coordinates used above $r_\chi=z^{f-k}$ is
\begin{gather*}
\begin{array}{ccccccccc}
y^{b+i-1}r_\chi \\
& \ddots \\
& & yr_\chi \\
& & & r_\chi & xr_\chi & \dots & x^{d-i-k}r_\chi
\end{array}
\end{gather*}
where the curve corresponds to the edge whose endpoints are the intersection of the $\chi$-chain with the lines marked with $x^{d-i}:y^{b+i}z^i$ and $x^{d-i-1}:y^{b+i+1}z^{i+1}$.
\end{Lemma}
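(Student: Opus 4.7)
The plan is to mirror the strategy used for Lemmas \ref{lem:piey} and \ref{lem:piex}. The starting point is that Fig.~\ref{fig:11} already records the local divisibility pattern at every del Pezzo vertex $v$ along the $\chi$-chain: of the two characters $\phi_1,\phi_2$ marking such a $v$, exactly one has its generator $r_{\phi_j}$ divisible by $r_\chi=z^{f-k}$, and which one depends only on which side of the dashed $\chi$-chain the corresponding eigenmonomial lies on. Combining this with Lemma~\ref{lem:ci} and Nakamura's Unique Valley Lemma \cite[Lemma 3.3]{Nakamura01} — as was done just before the Type \texttt{Iy} and Type \texttt{Ix} analyses — shows that the two elements of the socle of $Z_\tau$ that are divisible by $r_\chi$ are precisely the two generators $r_\phi$ at the del Pezzo vertices bordering the chosen $\chi$-triangle $\tau$, and these are the extremal monomials of the $G$-igsaw piece.

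Next, I would write down, just as in Figs.~\ref{fig:gensy} and~\ref{fig:gensx}, the explicit list of generators $r_\phi$ at every vertex sitting in a regular triangle along the $\chi$-chain. Using Theorem~\ref{thm:craw} applied to the characters surrounding each del Pezzo vertex (whose edges carry ratios $z^{f-k}:x^k y^{c+k}$, $x^{d-i}:y^{b+i}z^i$, $y^{e-j}:x^{a+j}z^j$ via the picture in Fig.~\ref{fig:gens}) one recovers at each step that $r_\phi/r_\chi$ is either a pure power of $x$ or a pure power of $y$, with exponents increasing by $1$ as one moves away from $\tau$. This produces the claimed L-shape of monomials $r_\chi, x r_\chi,\ldots, x^{d-i-k} r_\chi$ on the horizontal arm and $y r_\chi,\ldots, y^{b+i-1} r_\chi$ on the vertical arm.

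To finish, I would verify that the arms terminate at the claimed lengths. The right-hand terminus at $x^{d-i-k} r_\chi$ is forced because the next vertex in that direction is the one where the line $x^{d-i}:y^{b+i}z^i$ meets $z^{f-k}:x^k y^{c+k}$, so at that vertex $r_\phi$ ceases to be divisible by $r_\chi$; symmetrically for the top terminus at $y^{b+i-1} r_\chi$, using the relation $i+j+k=r$ together with the side-length identity for the regular triangle. A character-counting check against the number of del Pezzo divisors along the segment of the $\chi$-chain inside $\Delta$ (each contributing exactly one new character, by Lemma~\ref{lem:div}) confirms that every entry in the L-shape has been accounted for.

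The main obstacle I expect is bookkeeping: correctly assigning which side of the $\chi$-chain each generator lies on (and hence which of the two del Pezzo characters participates), and verifying that the exponents match up at the two endpoints where the $\chi$-chain meets edges of other slopes. Once the local tables analogous to Figs.~\ref{fig:gensy} and~\ref{fig:gensx} are written down, the remainder is a direct read-off, and the argument that the $G$-igsaw piece is exactly this L-shape is the same socle/Unique Valley argument used in the preceding two lemmas.
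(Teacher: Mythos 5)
Your proposal matches the paper's argument: the paper treats Type \texttt{Iz} curves by explicitly repeating the socle/Unique Valley analysis used for Types \texttt{Iy} and \texttt{Ix}, reading off from the divisibility pattern in Fig.~\ref{fig:11} that exactly one character at each incident del Pezzo vertex has generator divisible by $r_\chi$, which pins down the two extremal socle monomials and hence the L-shaped $G$-igsaw piece with $b+d-k$ characters. The only difference is that you spell out the generator tables and endpoint bookkeeping explicitly, whereas the paper leaves this step as a verbatim repeat of the two preceding cases.
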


This means that there are $b+d-k$ characters in~the $G$-igsaw piece for such a~\texttt{Iz} curve. We~shift notation to match the setup of the final case for Type \texttt{Ix} curves shown in~Fig.~\ref{fig:ixc}. In~particular, we assume our Type \texttt{Iz} curve $C$ lies in~an~$e_1$-corner triangle. Suppose it lies in~the $m$th triangle from the left. From considering local divisibility relations near Hirzebruch divisors along the $\chi$-chain this implies that $C$ unlocks $m-1$ Type~\texttt{Iy} curves to the left and $n-m$ Type~\texttt{Ix} curves to the right. From the calculations for Type~\texttt{Ix} curves, the $n-m$ Type \texttt{Ix} curves each feature $b_q+i_q$ characters in~their $G$-igsaw pieces. From a~similar calculation, one can verify that the Type \texttt{Iy} curves contain $i_q$ characters in~their $G$-igsaw pieces. These unlocked curves thus contribute
\begin{gather*}
\sum_{q=1}^{m-1}i_q+\sum_{q=m+1}^n(b_q+i_q)=\sum_{q=m+1}^nb_q+\sum_{q=1}^ni_q-i_m
\end{gather*}
characters to $\Gig(C)$. The part of the $\chi$-chain in~the $e_3$-corner triangle studied in~the previous case contributes $f-i_0$ characters, and the part in~the $e_2$-corner triangle contributes \mbox{$c-\sum_{q=1}^n(b_q+d_q)$}. If $i_0\not=0$ then we unlock another \texttt{Iy} curve with $i_0$ characters appearing in~its $G$-igsaw piece. If $i_0=0$ then the $\chi$-chain continues along the boundary of an~$e_3$-corner tri\-angle, contributing $f$ characters. In~either case there are $f$ characters coming from the $e_3$-corner triangle. Lastly, there are $\sum_{q=1}^n(d_q-i_q)$ del Pezzo and Hirzebruch divisors along the part of the $\chi$-chain inside $e_1$-corner triangles, giving in~total
\begin{gather*}\underset{\text{$e_3$-corner}}{\underbrace{f}}+\underset{\text{unlocked curves}}{\underbrace{\sum_{q=m+1}^nb_q+\sum_{q=1}^ni_q-i_m}}+\underset{\text{$e_1$-corner}}{\underbrace{\sum_{q=1}^n(d_q-i_q)}}+\underset{\text{$e_2$-corner}}{\underbrace{c-\sum_{q=1}^n(b_q+d_q)}}=f+c-\sum_{q=1}^mb_q-i_m
\end{gather*}
characters. Compare to the quantity $b+d-k$ in~Lemma~\ref{lem:piez}, which in~these coordinates is
\begin{gather*}
c-\sum_{q=1}^m(b_q+d_q)+f+\sum_{q=1}^md_q-i_m=f+c-\sum_{q=1}^mb_q-i_m
\end{gather*}
showing that every character in~$\Gig(C)$ is accounted for.

\subsubsection{Type \texttt{Ic} curves}

As in~\cite{Craw05} the case of curves whose chains meet the interior of a meeting of champions triangle only requires minor notational changes for the arguments above to carry over verbatim. For~brevity we omit it.

\subsection{Boundary curves}

Suppose now that $C$ is a~curve lying on the boundary of a regular triangle. We~will see that the unlocking procedure computes $\Gig(C)$ by a~similar argument to the case of $(-1,-1)$-curves. We~will again use neighbouring divisors to compute the socle and hence the total $G$-igsaw piece for~$C$, and then assess local divisibility relations to evidence that all these characters come from the subvarieties in~the unlocking procedure. We~will sketch the novel elements of the proof below.

Choose coordinates so that $C$ lies along a~straight line from $e_1$. Assume for the moment that the edge for~$C$ is actually incident to $e_1$.

Suppose that $D$ is a~Hirzebruch divisor along the $\chi$-chain. If $D$ is at the boundary of two $e_1$-corner triangles or an $e_1$-corner triangle and a~meeting of champions~-- as shown in~Fig.~\ref{fig:twoe1}~-- then one can check that $r_\chi$ divides the monomials in~the $G$-igsaw pieces for the Type \texttt{Iy} curves~$C_3$ and $C_4$.

\begin{figure}[h]\centering
\begin{tikzpicture}
\node (a) at (0,4){$\bullet$};
\node[fill=white,draw] (b) at (0,2){\small $D$};
\node (c) at (0,0){$\bullet$};
\node (d) at (-2,2){$\bullet$};
\node (e) at (2,2){$\bullet$};
\node (f) at (-2.5,0.8){$\bullet$};
\node (g) at (1.5,0.8){$\bullet$};

\small

\draw (a.center) to node[fill=white,inner sep=2pt] {$\chi$} (b)
to node[fill=white,inner sep=2pt] {$\chi$} (c.center);
\draw[dashed] (b) to node[fill=white,inner sep=2pt] {$C_1$} (d.center);
\draw[dashed] (b) to node[fill=white,inner sep=2pt] {$C_2$} (e.center);
\draw[dashed] (b) to node[fill=white,inner sep=2pt] {$C_3$} (f.center);
\draw[dashed] (b) to node[fill=white,inner sep=2pt] {$C_4$} (g.center);
\end{tikzpicture}
\caption{$D$ bordering two $e_1$-corner triangles or meeting of champions.}\label{fig:twoe1}
\end{figure}
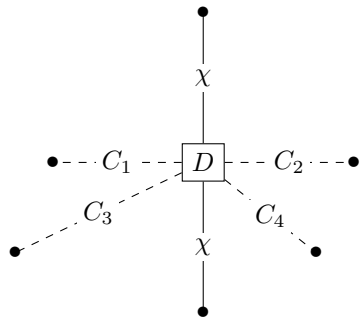

Suppose now that $D$ borders an $e_2$- and an $e_3$-corner triangle, or an $e_1$-corner triangle and an~$e_3$-corner triangle. We~illustrate this situation in~Fig.~\ref{fig:e2e3}, along with some of the ratios marking curves.

\begin{figure}[h]\centering
\begin{tikzpicture}[scale=1.25]
\node (a) at (0,4){$\bullet$};
\node[fill=white,draw] (b) at (0,2){\small $D$};
\node (c) at (0,0){$\bullet$};
\node (d) at (-2,1.5){$\bullet$};
\node (e) at (2.5,1.5){$\bullet$};
\node (f) at (-2.5,0.6){$\bullet$};
\node (g) at (2,0.6){$\bullet$};

\small

\draw (a.center) to node[fill=white,inner sep=2pt] {$\chi$} (b) to node[fill=white,inner sep=2pt] {$\chi$} (c.center);
\draw[dashed] (b) to node[fill=white,inner sep=2pt] {$C_1$} (d.center);
\draw[dashed] (b) to node[fill=white,inner sep=2pt] {$C_2$} (e.center);
\draw[dashed] (b) to node[fill=white,inner sep=2pt] {$C_3$} (f.center);
\draw[dashed] (b) to node[fill=white,inner sep=2pt] {$C_4$} (g.center);

\node (a) at (7,4){$\bullet$};
\node[fill=white,draw] (b) at (7,2){$D$};
\node (c) at (7,0){$\bullet$};
\node (d) at (4.7,1.5){$\bullet$};
\node (e) at (9.5,1.5){$\bullet$};
\node (f) at (4.5,0.6){$\bullet$};
\node (g) at (9,0.6){$\bullet$};

\draw (b) to node[fill=white,inner sep=2pt] {$C_3$} (f.center);
\draw (b) to node[fill=white,inner sep=2pt] {$C_4$} (g.center);
\draw (a.center) to node[fill=white,inner sep=2pt] {$z^f:y^c$} (b) to node[fill=white,inner sep=2pt] {$z^f:y^c$} (c.center);
\draw (b) to node[fill=white,inner sep=2pt] {$x^{d-i}:y^{b+i}z^i$} (d.center);
\draw (b) to node[fill=white,inner sep=2pt] {$x^{d-i}:z^{h+i'}y^{i'}$} (e.center);
\end{tikzpicture}
\caption{$D$ bordering an $e_1$- or an $e_2$-corner triangle and an $e_3$-corner triangle.}\label{fig:e2e3}
\end{figure}
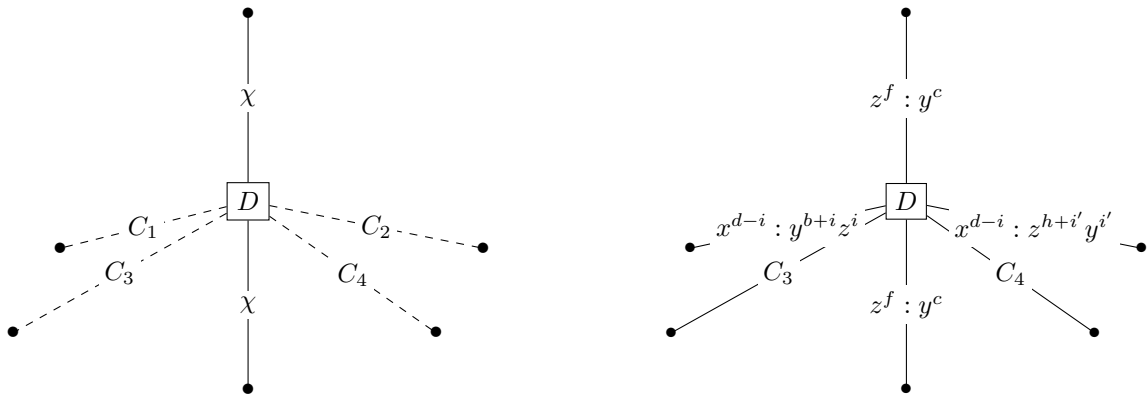

The same argument as in~the previous case gives that $r_\chi$ divides the $G$-igsaw pieces for~$C_3$ and $C_4$.

To treat the remaining two curves $C_1$ and $C_2$ in~each case, we use a~generalised form of \cite[Section~3.3.2]{CrawReid02}: an edge $\ell$ continues in~a~straight line past a~boundary edge $\ell_0$ if and only if the ratio marking $\ell$ features any common variables $x$, $y$, $z$ raised to a~strictly lower exponent than in~the ratio marking $\ell_0$. One can verify this by a~case-by-case analysis using as its base the original result from~\cite{CrawReid02}. This implies that $r_\chi$ divides the $G$-igsaw pieces for ``broken edges'' that do not continue in~a~straight line past the $\chi$-chain and that it does not divide any monomials in~the $G$-igsaw pieces for ``straight edges'' that do continue past the $\chi$-chain. This is captured exactly in~the notion of downstream curves relative to boundary curves in~Definition \ref{def:bdry_downstream}, and hence in~the unlocking procedure.

For the case when $C$ is not incident to $e_1$, consider two boundary curves $C$ and $C'$ shown in~Fig.~\ref{fig:twoc}, where $C$ is closer to $e_1$. One can verify using local divisibility relations that the only difference between the $G$-igsaw piece for~$C$ and for~$C'$ is that the latter loses the characters in~the $G$-igsaw pieces for the dashed curves in~broken chains; that is, exactly the curves that are downstream from $C$ but not from $C'$. By retracing back to the edge incident to $e_1$ the first calculation performed above suffices to compute the $G$-igsaw piece for an arbitrary boundary curve.

\begin{figure}[h]\centering
\begin{tikzpicture}[scale=0.9]
\node (a) at (0,4){$\bullet$};
\node (b) at (0,2){$\bullet$};
\node (c) at (0,0){$\bullet$};
\node (d) at (-2,1.5){$\bullet$};
\node (e) at (3,1.5){$\bullet$};
\node (f) at (-3,0.8){$\bullet$};
\node (g) at (2,0.8){$\bullet$};

\draw (a.center) to node[fill=white] {\small $C$} (b.center) to node[fill=white] {\small $C'$} (c.center);
\draw[dashed] (b.center) to (d.center);
\draw[dashed] (b.center) to (e.center);
\draw[dashed] (b.center) to (f.center);
\draw[dashed] (b.center) to (g.center);
\end{tikzpicture}
\caption{Two boundary curves.}\label{fig:twoc}
\end{figure}
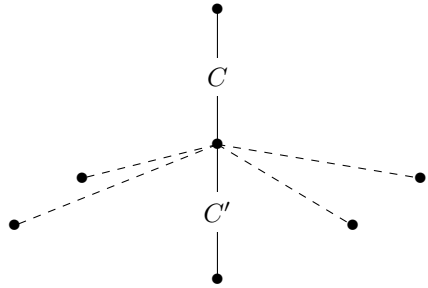

Variations of the arguments above work just as well for the cases not depicted when some of the edges incident to $D$ are also boundary edges of regular triangles. Counting up all these monomials and comparing them with a~socle calculation shows that these are all the characters in~the $G$-igsaw piece for~$C$, which validates the unlocking procedure for boundary curves and hence for all exceptional curves in~$G$-Hilb.

\subsection[Example: G=1/30(25,2,3)]{Example: $\boldsymbol{G =\frac{1}{30}(25,2,3)}$} \label{ex:30}

We will illustrate the unlocking procedure for~$G$-Hilb in~the case that $G=\frac{1}{30}(25,2,3)$. In~the figure below, dashed lines are edges within a~regular triangle and undashed lines are the result of the first stage of the Craw--Reid triangulation.

\begin{figure}[h]\centering
\begin{tikzpicture}[scale=0.59]
\node (e1) at (0,9){$\bullet$};
\node (e2) at (10,-6){$\bullet$};
\node (e3) at (-10,-6){$\bullet$};

\draw (e1.center) to (e2.center) to (e3.center) to (e1.center);

\node (xy1) at (10/3,4){$\bullet$};
\node (xy2) at (20/3,-1){$\bullet$};

\node (xz1) at (-5,1.5){$\bullet$};

\node (yz1) at (-10+4,-6){$\bullet$};
\node (yz2) at (-10+8,-6){$\bullet$};
\node (yz3) at (-10+12,-6){$\bullet$};
\node (yz4) at (-10+16,-6){$\bullet$};

\footnotesize

\node[draw, fill=white,inner sep=2pt] (a1) at (-2/6,9-5/2){$1$};
\node[draw, fill=white,inner sep=2pt] (a2) at (-4/6,9-10/2){$26$};
\node[draw, fill=white,inner sep=2pt] (a3) at (-6/6,9-15/2){$21$};
\node[draw, fill=white,inner sep=2pt] (a4) at (-8/6,9-20/2){$16$};
\node[draw, fill=white,inner sep=2pt] (a5) at (-10/6,9-25/2){$11$};

\draw (e1.center)
to node[fill=white,inner sep=2pt] {$6$} (a1)
to node[fill=white,inner sep=2pt] {$6$} (a2)
to node[fill=white,inner sep=2pt] {$6$} (a3)
to node[fill=white,inner sep=2pt] {$6$} (a4)
to node[fill=white,inner sep=2pt] {$6$} (a5)
to node[fill=white,inner sep=2pt] {$6$} (yz2.center);

\node[draw, fill=white,inner sep=2pt] (iz1) at (-16/3,-1){$23$};
\node[draw, fill=white,inner sep=2pt] (iz2) at (-17/3,-3.5){$13$};

\draw[dashed] (xz1.center)
to node[fill=white,inner sep=2pt] {$3$} (iz1)
to node[fill=white,inner sep=2pt] {$3$} (iz2)
to node[fill=white,inner sep=2pt] {$3$} (yz1.center);
\draw[dashed] (xz1.center)
to node[fill=white,inner sep=2pt] {$25$} (a1);
\draw[dashed] (iz1)
to node[fill=white,inner sep=2pt] {$28$} (a1);

\draw[dashed] (iz1) to node[fill=white,inner sep=2pt] {$15$} (a3);
\draw[dashed] (iz2) to node[fill=white,inner sep=2pt] {$18$} (a3);

\draw[dashed] (iz2) to node[fill=white,inner sep=2pt] {$5$} (a5);
\draw[dashed] (yz1.center) to node[fill=white,inner sep=2pt] {$8$} (a5);

\node[draw, fill=white,inner sep=2pt] (c1) at (-1+11/3,-1){$19$};
\node[draw, fill=white,inner sep=2pt] (c2) at (-1+22/3,-3.5){$17$};

\node[draw, fill=white,inner sep=2pt] (dp1) at (9/3,1.5){$\begin{matrix} 29 \\ 22\end{matrix}$};
\node[draw, fill=white,inner sep=2pt] (dp2) at (7/3,-3.5){$\begin{matrix} 14 \\ 7\end{matrix}$};

\draw(e3.center) to node[fill=white,inner sep=2pt] {$20$} (iz1)
to node[fill=white,inner sep=2pt] {$20$} (a2);
\draw(e3.center) to node[fill=white,inner sep=2pt] {$10$} (iz2)
to node[fill=white,inner sep=2pt] {$10$} (a4);
\draw(e2.center) to node[fill=white,inner sep=2pt] {$15$} (c2)
to node[fill=white,inner sep=2pt] {$15$} (c1)
to node[fill=white,inner sep=2pt] {$15$} (a3);

\draw[dashed] (xy2.center) to node[fill=white,inner sep=2pt] {$2$} (c2) to node[fill=white,inner sep=2pt] {$2$} (yz4.center);
\draw[dashed] (xy1.center) to node[fill=white,inner sep=2pt] {$4$} (dp1) to node[fill=white,inner sep=2pt] {$4$} (c1) to node[fill=white,inner sep=2pt] {$4$} (dp2) to node[fill=white,inner sep=2pt] {$4$} (yz3.center);
\draw[dashed] (a5) to node[fill=white,inner sep=2pt] {$5$} (dp2) to node[fill=white,inner sep=2pt] {$5$} (c2);
\draw[dashed] (a5) to node[fill=white,inner sep=2pt] {$9$} (yz3.center);
\draw[dashed] (a4) to node[fill=white,inner sep=2pt] {$12$} (dp2) to node[fill=white,inner sep=2pt] {$12$} (yz4.center);
\draw[dashed] (a4) to node[fill=white,inner sep=2pt] {$10$} (c1);
\draw[dashed] (a2) to node[fill=white,inner sep=2pt] {$20$} (dp1) to node[fill=white,inner sep=2pt] {$20$} (xy2.center);
\draw[dashed] (a1) to node[fill=white,inner sep=2pt] {$25$} (xy1.center);
\draw[dashed] (a1) to node[fill=white,inner sep=2pt] {$27$} (dp1) to node[fill=white,inner sep=2pt] {$27$} (c2);
\draw[dashed] (a2) to node[fill=white,inner sep=2pt] {$24$} (c1);
\end{tikzpicture}
\caption{Reid's recipe for~$G=\frac{1}{30}(25,2,3)$.}\label{fig:30rr}
\end{figure}
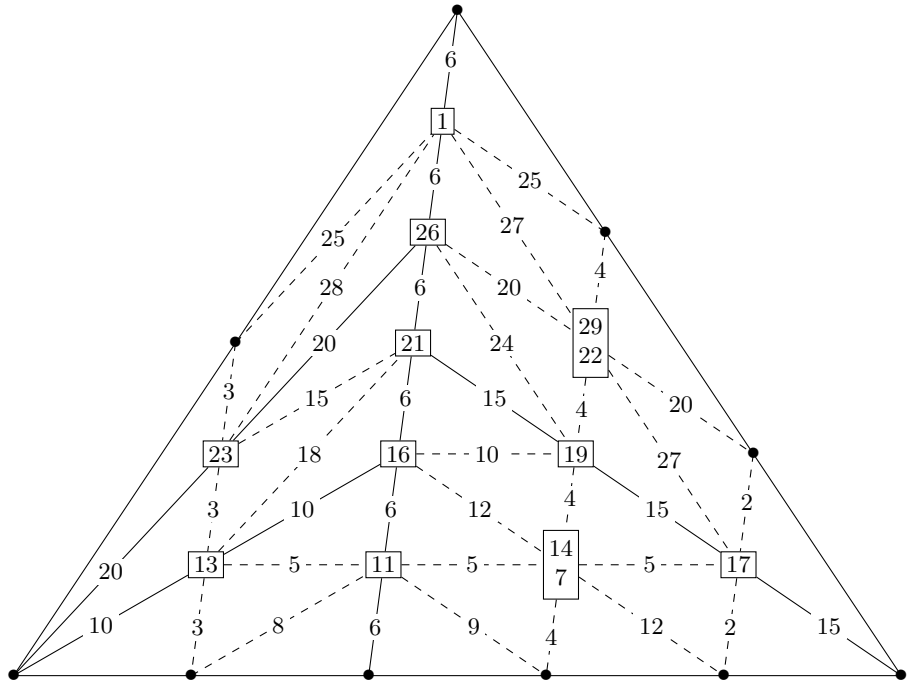

We will demonstrate the unlocking procedure for a~few curves in~$G$-Hilb. Consider the $15$-curve $C_{15}$ shown in~Fig.~\ref{fig:15c}. This curve is of Type \texttt{Ixb} since it is the only $(-1,-1)$-curve marked with $15$ and feeds to the right into boundary edges only. This gives
\begin{gather*}
\Gig(C_{15})=\{15,17,19,21\}.
\end{gather*}

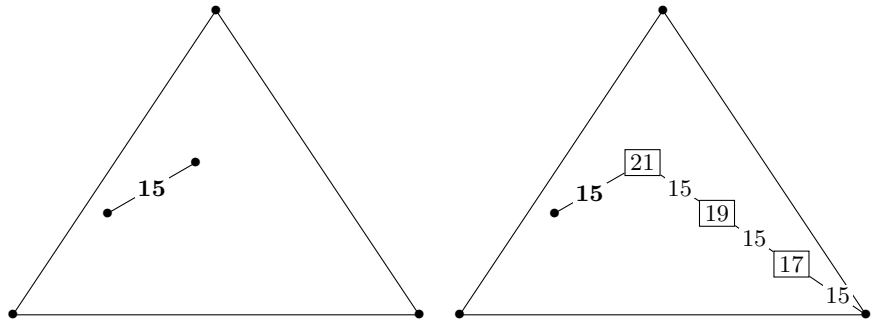
\begin{figure}[h]\centering
\begin{tikzpicture}[scale=0.27]
\footnotesize
\node (e1) at (0,9){$\bullet$};
\node (e2) at (10,-6){$\bullet$};
\node (e3) at (-10,-6){$\bullet$};

\draw (e1.center) to (e2.center) to (e3.center) to (e1.center);

\node (a3) at (-6/6,9-15/2){$\bullet$};
\node (iz1) at (-16/3,-1){$\bullet$};

\draw (iz1.center) to node[fill=white,inner sep=1pt] {$\mathbf{15}$} (a3.center);

\node (e1) at (0+22,9){$\bullet$};
\node (e2) at (10+22,-6){$\bullet$};
\node (e3) at (-10+22,-6){$\bullet$};

\draw (e1.center) to (e2.center) to (e3.center) to (e1.center);

\node[fill=white,draw,inner sep=2pt] (a3) at (-6/6+22,9-15/2){$21$};
\node (iz1) at (-16/3+22,-1){$\bullet$};
\node[draw, fill=white,inner sep=2pt] (c1) at (-1+11/3+22,-1){$19$};
\node[draw, fill=white,inner sep=2pt] (c2) at (-1+22/3+22,-3.5){$17$};

\draw (iz1.center) to node[fill=white,inner sep=1pt] {$\mathbf{15}$} (a3);
\draw (e2.center) to node[fill=white,inner sep=1pt,outer sep=0pt] {$15$} (c2) to node[fill=white,inner sep=1pt,outer sep=0pt] {$15$} (c1) to node[fill=white,inner sep=1pt,outer sep=0pt] {$15$} (a3);
\end{tikzpicture}
\caption{Unlocking for a~$15$-curve.}\label{fig:15c}
\end{figure}

Consider the $5$-curve $C_5$ shown in~Fig.~\ref{fig:5c}. It~passes into the right side of the junior simplex, unlocking the $9$-curve of Type \texttt{Iy} and giving
\begin{gather*}
\Gig(C_5)=\{5,7,9,11\}.
\end{gather*}

\begin{figure}[h]\centering
\begin{tikzpicture}[scale=0.27]
\footnotesize
\node (e1) at (0,9){$\bullet$};
\node (e2) at (10,-6){$\bullet$};
\node (e3) at (-10,-6){$\bullet$};

\draw (e1.center) to (e2.center) to (e3.center) to (e1.center);

\node (a5) at (-10/6,9-25/2){$\bullet$};
\node (iz2) at (-17/3,-3.5){$\bullet$};
\draw (iz2.center) to node[fill=white,inner sep=1pt] {$\mathbf{5}$} (a5.center);

\node (e1) at (0+22,9){$\bullet$};
\node (e2) at (10+22,-6){$\bullet$};
\node (e3) at (-10+22,-6){$\bullet$};

\draw (e1.center) to (e2.center) to (e3.center) to (e1.center);

\node[draw, fill=white,inner sep=2pt] (a5) at (-10/6+22,9-25/2){$11$};
\node (yz3) at (-10+12+22,-6){$\bullet$};
\node (iz2) at (-17/3+22,-3.5){$\bullet$};
\node (c2) at (-1+22/3+22,-3.5){$\bullet$};
\node[draw, fill=white,inner sep=2pt] (dp2) at (7/3+22,-3.5){$7$};

\draw (iz2.center) to node[fill=white,inner sep=1pt] {$\mathbf{5}$} (a5);
\draw (a5) to node[fill=white,inner sep=1pt] {$5$} (dp2) to node[fill=white,inner sep=1pt] {$5$} (c2.center);
\draw (a5) to node[fill=white,inner sep=1pt] {$9$} (yz3.center);
\end{tikzpicture}
\caption{Unlocking for a~$5$-curve.}\label{fig:5c}
\end{figure}

Consider the $2$-curve $C_2$ shown in~Fig.~\ref{fig:2c}. This is a~curve of Type \texttt{Iz}. We~first get the character $17$ marking the divisor on the $2$-chain, unlocking the $27$-chain. The $27$-chain contains a~del Pezzo divisor contributing the character $22$ in~this case. Hence
\begin{gather*}
\Gig(C_2)=\{2,17,22,27\}.
\end{gather*}

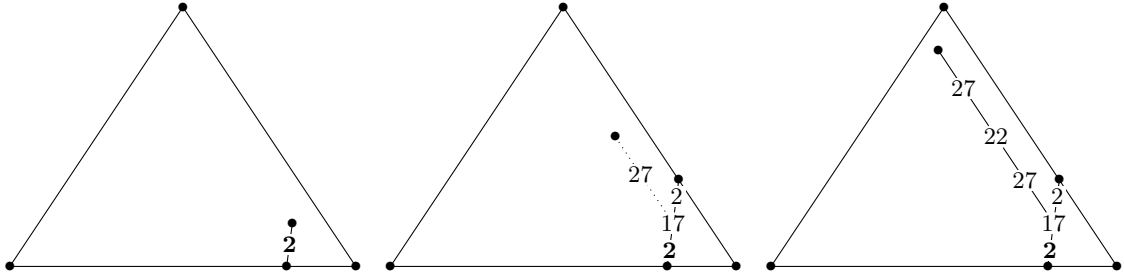
\begin{figure}[h]\centering
\begin{tikzpicture}[scale=0.23]
\footnotesize
\node (e1) at (0,9){$\bullet$};
\node (e2) at (10,-6){$\bullet$};
\node (e3) at (-10,-6){$\bullet$};

\draw (e1.center) to (e2.center) to (e3.center) to (e1.center);

\node (xy2) at (20/3+22,-1){$\bullet$};
\node (yz4) at (-10+16,-6){$\bullet$};
\node (c2) at (-1+22/3,-3.5){$\bullet$};
\draw (c2.center) to node[fill=white,inner sep=1pt] {$\mathbf{2}$} (yz4.center);

\node (e1) at (0+22,9){$\bullet$};
\node (e2) at (10+22,-6){$\bullet$};
\node (e3) at (-10+22,-6){$\bullet$};

\draw (e1.center) to (e2.center) to (e3.center) to (e1.center);

\node (yz4) at (-10+16+22,-6){$\bullet$};
\node[fill=white,inner sep=1pt] (c2) at (-1+22/3+22,-3.5){$17$};
\node (dp1) at (9/3+22,1.5){$\bullet$};
\draw (xy2.center) to node[fill=white,inner sep=1pt] {$2$} (c2) to node[fill=white,inner sep=1pt] {$\mathbf{2}$} (yz4.center);
\draw[dotted] (dp1.center) to node[fill=white,inner sep=1pt] {$27$} (c2);

\node (e1) at (0+44,9){$\bullet$};
\node (e2) at (10+44,-6){$\bullet$};
\node (e3) at (-10+44,-6){$\bullet$};

\draw (e1.center) to (e2.center) to (e3.center) to (e1.center);

\node (a1) at (-2/6+44,9-5/2){$\bullet$};
\node (xy2) at (20/3+44,-1){$\bullet$};
\node (yz4) at (-10+16+44,-6){$\bullet$};
\node[fill=white,inner sep=1pt] (c2) at (-1+22/3+44,-3.5){$17$};
\node[fill=white,inner sep=1pt] (dp1) at (9/3+44,1.5){$22$};

\draw (xy2.center) to node[fill=white,inner sep=1pt] {$2$} (c2)
to node[fill=white,inner sep=1pt] {$\mathbf{2}$} (yz4.center);
\draw (a1.center) to node[fill=white,inner sep=1pt] {$27$} (dp1);
\draw (dp1) to node[fill=white,inner sep=1pt] {$27$} (c2);
\end{tikzpicture}
\caption{Unlocking for a~$2$-curve.}\label{fig:2c}
\end{figure}

Lastly, we will consider the boundary $15$-curve $C_{15}'$ shown in Fig.~\ref{fig:15c'}.

\begin{figure}[h]\centering
\begin{tikzpicture}[scale=0.244]
\footnotesize
\node (e1) at (0,9){$\bullet$};
\node (e2) at (10,-6){$\bullet$};
\node (e3) at (-10,-6){$\bullet$};

\draw (e1.center) to (e2.center) to (e3.center) to (e1.center);

\node (c1) at (-1+11/3,-1){$\bullet$};
\node (c2) at (-1+22/3,-3.5){$\bullet$};
\draw (c2.center) to node[fill=white,inner sep=1pt] {$\mathbf{15}$} (c1.center);

\node (e1) at (0+22,9){$\bullet$};
\node (e2) at (10+22,-6){$\bullet$};
\node (e3) at (-10+22,-6){$\bullet$};

\draw (e1.center) to (e2.center) to (e3.center) to (e1.center);

\node (a2) at (-4/6+22,9-10/2){$\bullet$};
\node[fill=white,draw,inner sep=1.5pt] (a3) at (-6/6+22,9-15/2){$21$};
\node (a4) at (-8/6+22,9-20/2){$\bullet$};
\node (iz1) at (-16/3+22,-1){$\bullet$};
\node (iz2) at (-17/3+22,-3.5){$\bullet$};
\node[draw, fill=white,inner sep=1.5pt] (c1) at (-1+11/3+22,-1){$19$};
\node[draw,fill=white,inner sep=1.5pt] (c2) at (-1+22/3+22,-3.5){$17$};

\draw (iz1.center) to node[fill=white,inner sep=1pt,outer sep=0pt] {$15$} (a3);
\draw[dotted] (iz2.center) to node[fill=white,inner sep=1pt,outer sep=0pt] {$18$} (a3);
\draw (e2.center) to node[fill=white,inner sep=0pt,outer sep=0pt] {$15$} (c2)
to node[fill=white,inner sep=0pt,outer sep=0pt] {$\mathbf{15}$} (c1)
to node[fill=white,inner sep=0pt,outer sep=0pt] {$15$} (a3);
\draw[dotted] (a4.center) to node[fill=white,inner sep=1pt,outer sep=0pt] {$10$} (c1);
\draw[dotted] (a2.center) to node[fill=white,inner sep=0pt,outer sep=0pt] {$24$} (c1);

\node (e1) at (0+44,9){$\bullet$};
\node (e2) at (10+44,-6){$\bullet$};
\node (e3) at (-10+44,-6){$\bullet$};

\draw (e1.center) to (e2.center) to (e3.center) to (e1.center);

\node (a2) at (-4/6+44,9-10/2){$\bullet$};
\node[fill=white,draw,inner sep=2pt] (a3) at (-6/6+44,9-15/2){$21$};
\node[draw, fill=white,inner sep=2pt] (a4) at (-8/6+44,9-20/2){$16$};
\node (iz1) at (-16/3+44,-1){$\bullet$};
\node[fill = white, draw,inner sep=2pt] (iz2) at (-17/3+44,-3.5){$13$};
\node[draw, fill=white,inner sep=2pt] (c1) at (-1+11/3+44,-1){$19$};
\node[draw,fill=white,inner sep=2pt] (c2) at (-1+22/3+44,-3.5){$17$};

\draw (iz1.center) to node[fill=white,inner sep=1pt,outer sep=0pt] {$15$} (a3);
\draw (iz2) to node[fill=white,inner sep=1pt,outer sep=0pt] {$18$} (a3);
\draw(e3.center) to node[fill=white,inner sep=1pt,outer sep=0pt] {$10$} (iz2)
to node[fill=white,inner sep=1pt,outer sep=0pt] {$10$} (a4);
\draw (e2.center) to node[fill=white,inner sep=1pt,outer sep=0pt] {$15$} (c2)
to node[fill=white,inner sep=1pt,outer sep=0pt] {$\mathbf{15}$} (c1)
to node[fill=white,inner sep=1pt,outer sep=0pt] {$15$} (a3);
\draw[dashed] (a4) to node[fill=white,inner sep=1pt,outer sep=0pt] {$10$} (c1);
\draw (a2.center) to node[fill=white,inner sep=1pt,outer sep=0pt] {$24$} (c1);
\end{tikzpicture}
\caption{Unlocking for a~boundary $15$-curve.}\label{fig:15c'}
\end{figure}
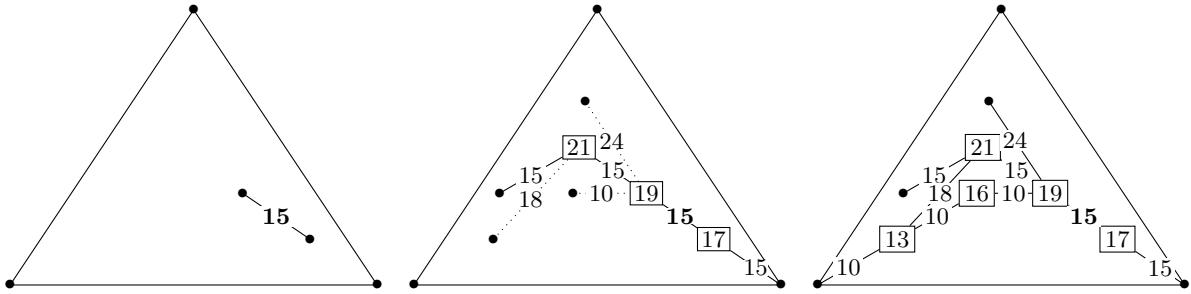

At the first step we include the $15$-chain and the curves of Type \texttt{Ix} and \texttt{Iy} unlocked by it. These curves are marked with characters 10, 24, 18. The $18$-curve and the $24$-curve are of Type~\texttt{Iy} and only contribute their own character to the $G$-igsaw piece. The $10$-curve is of Type~\texttt{Ixb} and so we add the Hirzebruch divisors along the $10$-chain. As~a~result
\begin{gather*}
\Gig(C_{15}')=\{10,13,15,16,17,18,19,21,24\}.
\end{gather*}

\vspace{-2mm}

\subsection[Example: G=1/35(1,3,31)]{Example: $\boldsymbol{G=\frac{1}{35}(1,3,31)}$} \label{ex:35}

We will use the example of $G=\frac{1}{35}(1,3,31)$ to illustrate a~phenomenon implicit, but less clear in~the long side picture. The triangulation for~$G$-Hilb and Reid's recipe are found in~Fig.~\ref{fig:1/35}.

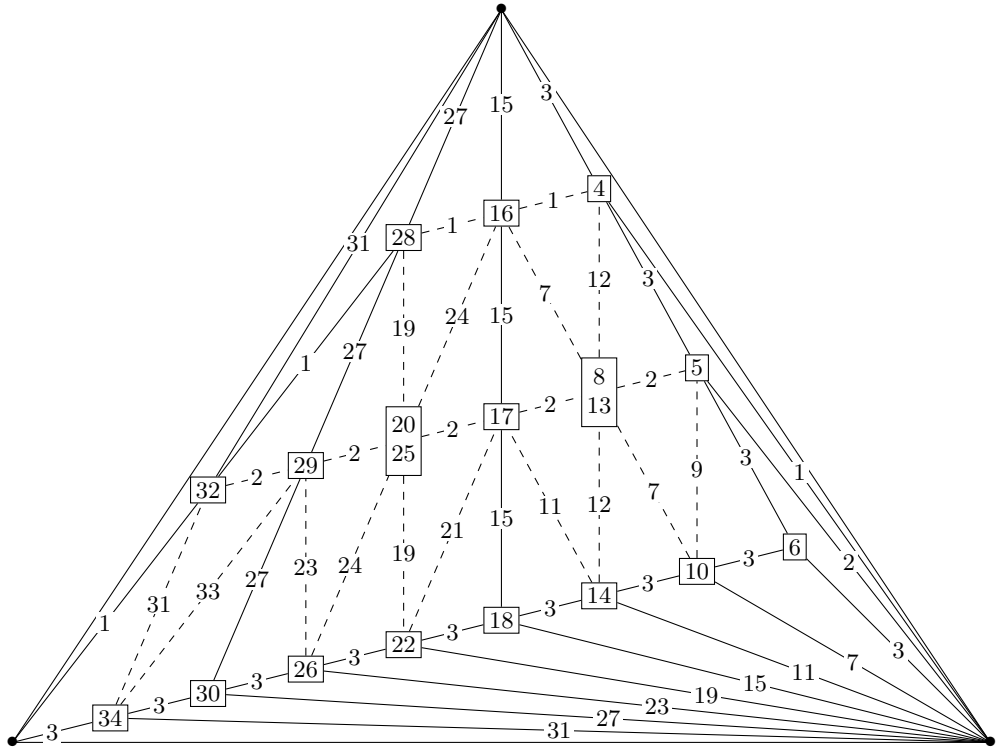
\begin{figure}[h]\centering
\begin{tikzpicture}[scale=0.65]
\small
\node (e1) at (0,9){$\bullet$};
\node (e2) at (10,-6){$\bullet$};
\node (e3) at (-10,-6){$\bullet$};

\draw (e1.center) to (e2.center) to (e3.center) to (e1.center);

\footnotesize

\node[draw, fill=white,inner sep=2pt] (a1) at (-10+2,-6+1/2){$34$};
\node[draw, fill=white,inner sep=2pt] (a2) at (-10+4,-6+2/2){$30$};
\node[draw, fill=white,inner sep=2pt] (a3) at (-10+6,-6+3/2){$26$};
\node[draw, fill=white,inner sep=2pt] (a4) at (-10+8,-6+4/2){$22$};
\node[draw, fill=white,inner sep=2pt] (a5) at (-10+10,-6+5/2){$18$};
\node[draw, fill=white,inner sep=2pt] (a6) at (-10+12,-6+6/2){$14$};
\node[draw, fill=white,inner sep=2pt] (a7) at (-10+14,-6+7/2){$10$};
\node[draw, fill=white,inner sep=2pt] (a8) at (6,-2){$6$};

\node[draw, fill=white,inner sep=2pt] (b1) at (2,9-11/3){$4$};
\node[draw, fill=white,inner sep=2pt] (b2) at (4,9-22/3){$5$};

\node[draw, fill=white,inner sep=2pt] (c1) at (-10+6-2,-6+3/2+11/3){$32$};
\node[draw, fill=white,inner sep=2pt] (c2) at (-10+8-2,-6+4/2+11/3){$29$};
\node[draw, fill=white,inner sep=2pt] (c3) at (-10+10-2,-6+5/2+11/3){$\begin{matrix} 20 \\ 25\end{matrix}$};
\node[draw, fill=white,inner sep=2pt] (c4) at (-10+12-2,-6+6/2+11/3){$17$};
\node[draw, fill=white,inner sep=2pt] (c5) at (-10+14-2,-6+7/2+11/3){$\begin{matrix} 8 \\ 13\end{matrix}$};

\node[draw, fill=white,inner sep=2pt] (d1) at (2-4,9-2/2-11/3){$28$};
\node[draw, fill=white,inner sep=2pt] (d2) at (2-2,9-1/2-11/3){$16$};

\draw(e2.center) to node[fill=white,inner sep=1pt] {$31$} (a1);
\draw(e2.center) to node[fill=white,inner sep=1pt] {$27$} (a2);
\draw(e2.center) to node[fill=white,inner sep=1pt] {$23$} (a3);
\draw(e2.center) to node[fill=white,inner sep=1pt] {$19$} (a4);
\draw(e2.center) to node[fill=white,inner sep=1pt] {$15$} (a5);
\draw(e2.center) to node[fill=white,inner sep=1pt] {$11$} (a6);
\draw(e2.center) to node[fill=white,inner sep=1pt] {$7$} (a7);
\draw(e2.center) to node[fill=white,inner sep=1pt] {$3$} (a8);

\draw(e2.center) to node[fill=white,inner sep=1pt] {$1$} (b1);
\draw(e2.center) to node[fill=white,inner sep=1pt] {$2$} (b2);

\draw(e3.center) to node[fill=white,inner sep=1pt] {$1$} (c1) to node[fill=white,inner sep=1pt] {$1$} (d1);

\draw(e1.center) to node[fill=white,inner sep=1pt] {$31$} (c1);
\draw(e1.center) to node[fill=white,inner sep=1pt] {$27$} (d1)
to node[fill=white,inner sep=1pt] {$27$} (c2)
to node[fill=white,inner sep=1pt] {$27$} (a2);
\draw(e1.center) to node[fill=white,inner sep=1pt] {$15$} (d2)
to node[fill=white,inner sep=1pt] {$15$} (c4)
to node[fill=white,inner sep=1pt] {$15$} (a5);
\draw (e1.center) to node[fill=white,inner sep=1pt] {$3$} (b1)
to node[fill=white,inner sep=1pt] {$3$} (b2)
to node[fill=white,inner sep=1pt] {$3$} (a8);

\draw (e3.center) to node[fill=white,inner sep=1pt] {$3$} (a1)
to node[fill=white,inner sep=1pt] {$3$} (a2)
to node[fill=white,inner sep=1pt] {$3$} (a3)
to node[fill=white,inner sep=1pt] {$3$} (a4)
to node[fill=white,inner sep=1pt] {$3$} (a5)
to node[fill=white,inner sep=1pt] {$3$} (a6)
to node[fill=white,inner sep=1pt] {$3$} (a7)
to node[fill=white,inner sep=1pt] {$3$} (a8);

\draw[dashed] (a1) to node[fill=white,inner sep=1pt] {$31$} (c1);
\draw[dashed] (a1) to node[fill=white,inner sep=1pt] {$33$} (c2);
\draw[dashed] (c1) to node[fill=white,inner sep=1pt] {$2$} (c2);

\draw[dashed] (d1) to node[fill=white,inner sep=1pt] {$1$} (d2);
\draw[dashed] (a3) to node[fill=white,inner sep=1pt] {$23$} (c2);
\draw[dashed] (a4) to node[fill=white,inner sep=1pt] {$21$} (c4);
\draw[dashed] (d1) to node[fill=white,inner sep=1pt] {$19$} (c3)
to node[fill=white,inner sep=1pt] {$19$} (a4);
\draw[dashed] (c2) to node[fill=white,inner sep=1pt] {$2$} (c3)
to node[fill=white,inner sep=1pt] {$2$} (c4);
\draw[dashed] (a3) to node[fill=white,inner sep=1pt] {$24$} (c3)
to node[fill=white,inner sep=1pt] {$24$} (d2);

\draw[dashed] (d2) to node[fill=white,inner sep=1pt] {$1$} (b1);
\draw[dashed] (a7) to node[fill=white,inner sep=1pt] {$9$} (b2);
\draw[dashed] (a6) to node[fill=white,inner sep=1pt] {$11$} (c4);
\draw[dashed] (a6) to node[fill=white,inner sep=1pt] {$12$} (c5) to node[fill=white,inner sep=1pt] {$12$} (b1);
\draw[dashed] (c4) to node[fill=white,inner sep=1pt] {$2$} (c5) to node[fill=white,inner sep=1pt] {$2$} (b2);
\draw[dashed] (a7) to node[fill=white,inner sep=1pt] {$7$} (c5) to node[fill=white,inner sep=1pt] {$7$} (d2);
\end{tikzpicture}
\caption{Reid's recipe for~$G=\frac{1}{35}(1,3,31)$.}\label{fig:1/35}
\end{figure}

Consider the $3$-curve $C_3$ incident to $e_1$. The unlocking procedure for this curve is shown in~Fig.~\ref{fig:1/35c3} giving
\begin{gather*}\begin{split}&
\Gig(C_3)=\{1,2,3,4,5,6,8,9,10,12,13,14,16,17,18,20,21,22,24,25,26,28,29,
\\ & \phantom{\Gig(C_3)=\{}
30,32,33,34\}.\end{split}
\end{gather*}
Notice that every chain meeting the $3$-chain in~a~vertex is broken there. Repeating for the next $3$-curve along the chain produces the same unlocking sequence except that the topmost part including the $1$-chain and the $12$-chain are not included, capturing that the monomials in~the corresponding character spaces are no longer divisible by $r_3$ there.

\begin{figure}\centering
\begin{tikzpicture}[scale=0.48]
\footnotesize
\node (e1) at (0,9){$\bullet$};
\node (e2) at (10,-6){$\bullet$};
\node (e3) at (-10,-6){$\bullet$};

\draw (e1.center) to (e2.center) to (e3.center) to (e1.center);

\node (a7) at (-10+14,-6+7/2){$\bullet$};
\node[fill=white,inner sep=1pt] (a8) at (6,-2){$6$};

\node[draw, fill=white,inner sep=2pt] (b1) at (2,9-11/3){$4$};
\node[draw, fill=white,inner sep=2pt] (b2) at (4,9-22/3){$5$};

\node (c5) at (-10+14-2,-6+7/2+11/3){$\bullet$};

\node (d2) at (2-2,9-1/2-11/3){$\bullet$};

\draw(e2.center) to node[fill=white,inner sep=1pt] {$3$} (a8);
\draw(e2.center) to node[fill=white,inner sep=1pt] {$1$} (b1);
\draw(e2.center) to node[fill=white,inner sep=1pt] {$2$} (b2);

\draw (e1.center) to node[fill=white,inner sep=1pt] {$\mathbf{3}$} (b1);
\draw (b1) to node[fill=white,inner sep=1pt] {$3$} (b2)
to node[fill=white,inner sep=1pt] {$3$} (a8);

\draw[dotted] (a7.center) to node[fill=white,inner sep=1pt] {$3$} (a8);

\draw[dotted] (d2.center) to node[fill=white,inner sep=1pt] {$1$} (b1);
\draw[dotted] (a7.center) to node[fill=white,inner sep=1pt] {$9$} (b2);
\draw[dotted] (c5.center) to node[fill=white,inner sep=1pt] {$12$} (b1);
\draw[dotted] (c5.center) to node[fill=white,inner sep=1pt] {$2$} (b2);


\node (e1) at (0,9-16){$\bullet$};
\node (e2) at (10,-6-16){$\bullet$};
\node (e3) at (-10,-6-16){$\bullet$};

\draw (e1.center) to (e2.center) to (e3.center) to (e1.center);

\node[draw, fill=white,inner sep=2pt] (a1) at (-10+2,-6-16+1/2){$34$};
\node[draw, fill=white,inner sep=2pt] (a2) at (-10+4,-16-6+2/2){$30$};
\node[draw, fill=white,inner sep=2pt] (a3) at (-10+6,-16-6+3/2){$26$};
\node[draw, fill=white,inner sep=2pt] (a4) at (-10+8,-16-6+4/2){$22$};
\node[draw, fill=white,inner sep=2pt] (a5) at (-10+10,-16-6+5/2){$18$};
\node[draw, fill=white,inner sep=2pt] (a6) at (-10+12,-16-6+6/2){$14$};
\node[draw, fill=white,inner sep=2pt] (a7) at (-10+14,-16-6+7/2){$10$};
\node[draw, fill=white,inner sep=2pt] (a8) at (6,-2-16){$6$};

\node[draw, fill=white,inner sep=2pt] (b1) at (2,-16+9-11/3){$4$};
\node[draw, fill=white,inner sep=2pt] (b2) at (4,9-16-22/3){$5$};

\node[draw, fill=white,inner sep=2pt] (c1) at (-10+6-2,-16-6+3/2+11/3){$32$};
\node[draw, fill=white,inner sep=2pt] (c2) at (-10+8-2,-16-6+4/2+11/3){$29$};
\node[draw, fill=white,inner sep=2pt] (c3) at (-10+10-2,-16-6+5/2+11/3){$20$};
\node[draw, fill=white,inner sep=2pt] (c4) at (-10+12-2,-16-6+6/2+11/3){$17$};
\node[draw, fill=white,align=center,inner sep=2pt] (c5) at (-10+14-2,-16-6+7/2+11/3){8 \\ 13};

\node[draw, fill=white,inner sep=2pt] (d1) at (2-4,9-16-2/2-11/3){$28$};
\node[draw, fill=white,inner sep=2pt] (d2) at (2-2,9-16-1/2-11/3){$16$};

\draw(e2.center) to node[fill=white,inner sep=1pt] {$3$} (a8);

\draw(e2.center) to node[fill=white,inner sep=1pt] {$1$} (b1);
\draw(e2.center) to node[fill=white,inner sep=1pt] {$2$} (b2);

\draw(e3.center) to node[fill=white,inner sep=1pt] {$1$} (c1)
to node[fill=white,inner sep=1pt] {$1$} (d1);

\draw (e1.center) to node[fill=white,inner sep=1pt] {$\mathbf{3}$} (b1);
\draw (b1) to node[fill=white,inner sep=1pt] {$3$} (b2)
to node[fill=white,inner sep=1pt] {$3$} (a8);

\draw (e3.center) to node[fill=white,inner sep=1pt] {$3$} (a1)
to node[fill=white,inner sep=1pt] {$3$} (a2)
to node[fill=white,inner sep=1pt] {$3$} (a3)
to node[fill=white,inner sep=1pt] {$3$} (a4)
to node[fill=white,inner sep=1pt] {$3$} (a5)
to node[fill=white,inner sep=1pt] {$3$} (a6)
to node[fill=white,inner sep=1pt] {$3$} (a7)
to node[fill=white,inner sep=1pt] {$3$} (a8);

\draw[dotted] (a1) to node[fill=white,inner sep=1pt] {$33$} (c2);
\draw (c1) to node[fill=white,inner sep=1pt] {$2$} (c2);

\draw (d1) to node[fill=white,inner sep=1pt] {$1$} (d2);
\draw[dotted] (a4) to node[fill=white,inner sep=1pt] {$21$} (c4);
\draw (c2) to node[fill=white,inner sep=1pt] {$2$} (c3)
to node[fill=white,inner sep=1pt] {$2$} (c4);
\draw[dotted] (c3) to node[fill=white,inner sep=1pt] {$24$} (d2);

\draw (d2) to node[fill=white,inner sep=1pt] {$1$} (b1);
\draw (a7) to node[fill=white,inner sep=1pt] {$9$} (b2);
\draw (a6) to node[fill=white,inner sep=1pt] {$12$} (c5);
\draw (c5) to node[fill=white,inner sep=1pt] {$12$} (b1);
\draw (c4) to node[fill=white,inner sep=1pt] {$2$} (c5);
\draw (c5) to node[fill=white,inner sep=1pt] {$2$} (b2);


\node (e1) at (0,9-17-15){$\bullet$};
\node (e2) at (10,-17-6-15){$\bullet$};
\node (e3) at (-10,-17-6-15){$\bullet$};

\draw (e1.center) to (e2.center) to (e3.center) to (e1.center);

\node[draw, fill=white,inner sep=2pt] (a1) at (-10+2,-17-6+1/2-15){$34$};
\node[draw, fill=white,inner sep=2pt] (a2) at (-10+4,-17-6+2/2-15){$30$};
\node[draw, fill=white,inner sep=2pt] (a3) at (-10+6,-17-6+3/2-15){$26$};
\node[draw, fill=white,inner sep=2pt] (a4) at (-10+8,-17-6+4/2-15){$22$};
\node[draw, fill=white,inner sep=2pt] (a5) at (-10+10,-17-6+5/2-15){$18$};
\node[draw, fill=white,inner sep=2pt] (a6) at (-10+12,-17-6+6/2-15){$14$};
\node[draw, fill=white,inner sep=2pt] (a7) at (-10+14,-17-6+7/2-15){$10$};
\node[draw, fill=white,inner sep=2pt] (a8) at (6,-17-2-15){$6$};

\node[draw, fill=white,inner sep=2pt] (b1) at (2,-17+9-11/3-15){$4$};
\node[draw, fill=white,inner sep=2pt] (b2) at (4,9-17-22/3-15){$5$};

\node[draw, fill=white,inner sep=2pt] (c1) at (-10+6-2,-17-6-15+3/2+11/3){$32$};
\node[draw, fill=white] (c2) at (-10+8-2,-17-6-15+4/2+11/3){$29$};
\node[draw, fill=white,align=center,inner sep=2pt] (c3) at (-10+10-2,-15-6+5/2-17+11/3){20 \\ 25};
\node[draw, fill=white,inner sep=2pt] (c4) at (-10+12-2,-17-6+6/2-15+11/3){$17$};
\node[draw, fill=white,align=center,inner sep=2pt] (c5) at (-10+14-2,-15-6+7/2+11/3-17){8 \\ 13};

\node[draw, fill=white,inner sep=2pt] (d1) at (2-4,9-17-2/2-11/3-15){$28$};
\node[draw, fill=white,inner sep=2pt] (d2) at (2-2,9-17-1/2-11/3-15){$16$};

\draw(e2.center) to node[fill=white,inner sep=2pt] {$3$} (a8);

\draw(e2.center) to node[fill=white,inner sep=1pt] {$1$} (b1);
\draw(e2.center) to node[fill=white,inner sep=1pt] {$2$} (b2);
\draw(e3.center) to node[fill=white,inner sep=1pt] {$1$} (c1)
to node[fill=white,inner sep=1pt] {$1$} (d1);

\draw (e1.center) to node[fill=white,inner sep=1pt] {$\mathbf{3}$} (b1);
\draw (b1) to node[fill=white,inner sep=1pt] {$3$} (b2)
to node[fill=white,inner sep=1pt] {$3$} (a8);

\draw (e3.center) to node[fill=white,inner sep=1pt] {$3$} (a1)
to node[fill=white,inner sep=1pt] {$3$} (a2)
to node[fill=white,inner sep=1pt] {$3$} (a3)
to node[fill=white,inner sep=1pt] {$3$} (a4)
to node[fill=white,inner sep=1pt] {$3$} (a5)
to node[fill=white,inner sep=1pt] {$3$} (a6)
to node[fill=white,inner sep=1pt] {$3$} (a7)
to node[fill=white,inner sep=1pt] {$3$} (a8);

\draw (a1) to node[fill=white,inner sep=1pt] {$33$} (c2);
\draw (c1) to node[fill=white,inner sep=1pt] {$2$} (c2);

\draw (d1) to node[fill=white,inner sep=1pt] {$1$} (d2);
\draw (a4) to node[fill=white,inner sep=1pt] {$21$} (c4);
\draw (c2) to node[fill=white,inner sep=1pt] {$2$} (c3)
to node[fill=white,inner sep=1pt] {$2$} (c4);
\draw (a3) to node[fill=white,inner sep=1pt] {$24$} (c3);
\draw (c3) to node[fill=white,inner sep=1pt] {$24$} (d2);

\draw (d2) to node[fill=white,inner sep=1pt] {$1$} (b1);
\draw (a7) to node[fill=white,inner sep=1pt] {$9$} (b2);
\draw (a6) to node[fill=white,inner sep=1pt] {$12$} (c5);
\draw (c5) to node[fill=white,inner sep=1pt] {$12$} (b1);
\draw (c4) to node[fill=white,inner sep=1pt] {$2$} (c5);
\draw (c5) to node[fill=white,inner sep=1pt] {$2$} (b2);
\end{tikzpicture}
\caption{Unlocking for a~$3$-curve.}\label{fig:1/35c3}
\end{figure}

\section[Walls of C0]{Walls of $\boldsymbol{\mfk{C}_0}$}

In this section we will compute explicit inequalities carving out $\mfk{C}_0$, and will determine which of these inequalities are necessary and hence define walls of $\mfk{C}_0$.

\subsection[Type I walls]{Type \texttt{I} walls}

We know from \cite[Theorem~9.12]{CrawIshii04} that all flops in~a~single $(-1,-1)$-curve $C$ are achieved by~a~wall-crossing from $\mfk{C}_0$. Moreover, we have $\deg (\mR_\rho|_C)=1$ for all $\rho\in\Gig(C)$ from \mbox{\cite[Corol\-lary~6.3]{CrawIshii04}}. The unlocking procedure hence gives a~combinatorial way of writing down the equations of these walls.

\begin{Proposition} \label{prop:i} Suppose $C\subset G\hilb\mathbb{A}^3$ is an exceptional $(-1,-1)$-curve marked with cha\-rac\-ter $\chi$ by Reid's recipe. Then, the Type \texttt{I} wall corresponding to $C$ is given by
\[
\theta(\ph_{\mfk{C}_0}(\mO_C))=\sum_{\chi\in\Gig(C)}\theta(\chi)>0,
\]
where $\Gig(C)$ is computed by the unlocking procedure.
\end{Proposition}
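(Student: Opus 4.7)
My plan is to reduce the statement to three ingredients already set up earlier in the excerpt and then simply read off the formula. First I would start from the abstract identity
\[
\theta(\ph_{\mfk{C}_0}(\mO_C)) = \sum_{\rho\in\Gig(C)} \deg(\mR_\rho|_C)\,\theta(\rho),
\]
which was obtained at the end of the subsection on abstract inequalities from \cite[Corollary~5.2]{CrawIshii04} together with the observation that $\mR_\rho|_C$ is trivial for $\rho\notin\Gig(C)$ (two adjacent torus-invariant $G$-clusters share the eigenmonomial $r_\rho$, forcing the relative degree to vanish). So the starting point requires no new input.

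The second step is to collapse all the coefficients to $1$. For this I would invoke \cite[Corollary~6.3]{CrawIshii04}, which asserts $\deg(\mR_\rho|_C)=1$ for every $\rho\in\Gig(C)$ whenever $C$ is an exceptional $(-1,-1)$-curve. Substituting immediately yields
\[
\theta(\ph_{\mfk{C}_0}(\mO_C)) = \sum_{\chi\in\Gig(C)}\theta(\chi).
\]

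The third step is to replace the abstract set $\Gig(C)$ by the output of Algorithm \ref{alg:up}. This is legitimate because Section \ref{sec:comp} already established the validity of the unlocking procedure for every flavour of $(-1,-1)$-curve, namely Types \texttt{Ix}, \texttt{Iy}, \texttt{Iz}, \texttt{Ic}: for each type the characters produced by the unlocking flow were matched against the socle monomials of the two adjacent $G$-clusters enumerated in Lemmas \ref{lem:piey}, \ref{lem:piex} and \ref{lem:piez}, using in an essential way Lemma \ref{lem:ci} and Nakamura's Unique Valley Lemma. Finally, to conclude that the resulting linear form really cuts out a wall of $\mfk{C}_0$ (and is not a redundant half-space), I would cite \cite[Theorem~9.12]{CrawIshii04}, which guarantees that every flop in a single exceptional $(-1,-1)$-curve is realised by a wall-crossing directly out of $\mfk{C}_0$, and then appeal to the classification of wall types recalled after Lemma \ref{lem:ci5} to identify the wall as Type \texttt{I}.

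The genuine obstacle in this proposition is not any of these four pieces in isolation but rather the preceding combinatorial identification of $\Gig(C)$ that underlies Algorithm \ref{alg:up}; with that verification in hand from Section \ref{sec:comp}, the proof of Proposition \ref{prop:i} is essentially a two-line assembly of citations.
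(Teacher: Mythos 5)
Your proposal matches the paper's argument essentially verbatim: the paper likewise assembles the abstract degree formula from \cite[Corollary~5.2]{CrawIshii04}, collapses the coefficients to $1$ via \cite[Corollary~6.3]{CrawIshii04}, identifies $\Gig(C)$ with the output of the unlocking procedure validated in Section~\ref{sec:comp}, and cites \cite[Theorem~9.12]{CrawIshii04} for the fact that every flop in a single $(-1,-1)$-curve is realised by a wall-crossing from $\mfk{C}_0$, so that the inequality genuinely defines a Type \texttt{I} wall. Your assessment that the real content lies in the prior verification of Algorithm~\ref{alg:up} is also exactly the paper's point of view.
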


\subsection[No Type II walls]{No Type \texttt{II} walls}

\begin{Proposition} \label{prop:II} Suppose $C\subset G\hilb\mathbb{A}^3$ is an exceptional $(1,-3)$-curve marked with character~$\chi$ by Reid's recipe. Then, the inequality corresponding to $C$ is given by
\[
\theta\big(\ph_{\mfk{C}_0}(\mO_C)\big)=2\cdot\theta\big(\chi^{\otimes 2}\big)+\sum_{\chi\in\Gig(C)\setminus\{\chi^{\otimes 2}\}}\theta(\chi)>0,
\]
where $\Gig(C)$ is computed by the unlocking procedure.
\end{Proposition}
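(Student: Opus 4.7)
The plan is to apply the general formula
\[
\theta\big(\ph_{\mfk{C}_0}(\mO_C)\big) = \sum_{\rho} \deg(\mR_\rho|_C)\,\theta(\rho) = \sum_{\rho \in \Gig(C)} \deg(\mR_\rho|_C)\,\theta(\rho)
\]
from \cite[Corollary~5.2]{CrawIshii04}, where the second equality uses that $\mR_\rho|_C \cong \mO_C$ whenever $\rho \notin \Gig(C)$. With this reduction in hand, the identity in the proposition is equivalent to the pair of degree statements $\deg(\mR_{\chi^{\otimes 2}}|_C) = 2$ and $\deg(\mR_\rho|_C) = 1$ for every $\rho \in \Gig(C) \setminus \{\chi^{\otimes 2}\}$.

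First I would exploit the defining feature of a~$(1,-3)$-curve: the edge for~$C$ is incident to a~trivalent vertex~$v$, whose associated divisor $D$ is isomorphic to~$\pr^2$ with all three of its toric curves marked by~$\chi$ and with $D$ itself marked by~$\chi^{\otimes 2}$. Since $\mR_\chi$ has degree~$1$ on each of these three $\chi$-curves, $\mR_\chi|_D \cong \mO_{\pr^2}(1)$, so $\deg(\mR_\chi|_C) = 1$. Then the tautological relation $r_\chi^2 = r_{\chi^{\otimes 2}}$ from Theorem~\ref{thm:craw} forces $\mR_{\chi^{\otimes 2}}|_D \cong \mO_{\pr^2}(2)$, and restricting to the line $C \subset D$ yields the desired $\deg(\mR_{\chi^{\otimes 2}}|_C) = 2$.

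Next I would treat the remaining characters $\rho \in \Gig(C) \setminus \{\chi, \chi^{\otimes 2}\}$, which Algorithm~\ref{alg:up} produces either as markings of other divisors along the $\chi$-chain or as characters in~$G$-igsaw pieces of curves unlocked by~$C$. In either case, the divisibility analysis of Section~\ref{sec:comp} shows that $r_\rho$ is divisible by $r_\chi$ on exactly one of the two $\chi$-triangles adjacent to~$C$. This is the same local condition that produces $\deg(\mR_\rho|_C) = 1$ for $(-1,-1)$-curves via \cite[Corollary~6.3]{CrawIshii04}, and the argument transfers to the $(1,-3)$ setting because it depends only on the monomial structure of $G$-clusters on a neighbouring pair of $\chi$-triangles and not on the splitting type of the normal bundle of~$C$. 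Summing all contributions yields the stated expression, and strict positivity follows because $L_{\mfk{C}_0}(\theta)$ is ample on~$\M_{\mfk{C}_0} = G\hilb\A^3$ for any $\theta \in \mfk{C}_0$, so pairing with the exceptional class of~$C$ is strictly positive.

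The principal obstacle is verifying that the doubling happens precisely once and only at~$\chi^{\otimes 2}$, despite the fact that the $\chi$-chain branches at the trivalent vertex into three directions. One must check that as the unlocking procedure aggregates contributions from the three branches emanating from~$v$, no character is accidentally counted twice and no character other than $\chi^{\otimes 2}$ acquires a higher multiplicity from lying on more than one branch. This amounts to a careful local computation of $r_\rho/r_\chi$ on the six triangles meeting at~$v$, very much in the spirit of the case analysis of Section~\ref{sec:comp} but tailored to the branched structure of a~trivalent vertex; it is also the geometric feature that ultimately explains why this inequality is redundant and produces no Type~\texttt{II} wall of $\mfk{C}_0$.
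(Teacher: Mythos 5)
Your overall strategy coincides with the paper's: reduce via \cite[Corollary~5.2]{CrawIshii04} to computing $\deg(\mR_\rho|_C)$ for $\rho\in\Gig(C)$, extract the coefficient $2$ on $\chi^{\otimes 2}$ from the tautological relation $r_{\chi^{\otimes 2}}=r_\chi^2$ at the trivalent vertex, and argue that every other character contributes with multiplicity $1$. The first two steps are fine (your route through $\mR_\chi|_D\cong\mO_{\pr^2}(1)$ and $\mR_{\chi^{\otimes 2}}|_D\cong\mO_{\pr^2}(2)$ is a harmless repackaging of the paper's computation). The problem is the third step: you explicitly defer it, calling the verification that no character other than $\chi^{\otimes 2}$ acquires multiplicity $\geq 2$ ``the principal obstacle'' and proposing an unexecuted case analysis of $r_\rho/r_\chi$ over the six triangles at~$v$. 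That is precisely the content of the proposition, so as written the proposal has a genuine gap rather than a proof. Your intermediate criterion is also slightly off: whether $r_\rho$ is divisible by $r_\chi$ ``on exactly one of the two $\chi$-triangles'' is not what controls the degree; the relevant quantity is $\deg(\mR_\rho|_C)=\max\{k\colon r_\chi^k\mid r_\rho\}$, i.e., the \emph{power} of $r_\chi$ dividing $r_\rho$, not on how many adjacent triangles a first-order divisibility holds.

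The paper closes this step in one line, with no six-triangle computation. Since the exceptional $\pr^2$ containing $C$ is marked with $\chi^{\otimes 2}$, Lemma~\ref{lem:ci} (= \cite[Lemma~9.1]{CrawIshii04}) says that $r_{\chi^{\otimes 2}}=r_\chi^2$ lies in the \emph{socle} of every torus-invariant $G$-cluster at a fixed point of that divisor; hence $r_\chi^2$ is maximal under divisibility among the monomials of such a cluster, so no other monomial of the total $G$-igsaw piece for $C$ is divisible by $r_\chi^2$. Combined with the degree formula above, this forces $\deg(\mR_\rho|_C)=1$ for every $\rho\in\Gig(C)\setminus\{\chi^{\otimes 2}\}$ and $\deg(\mR_{\chi^{\otimes 2}}|_C)=2$, which is the required identity. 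If you replace your deferred local analysis with this socle argument, your write-up becomes a complete proof essentially identical to the paper's.
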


\begin{proof} Notice that such a~curve $C$ lies inside the exceptional $\pr^2$ in~the meeting of champions case when the meeting of champions triangle has side length $0$. Thus the $\pr^2$ is marked with $\chi^{\otimes 2}$ and lies in~the socle of any torus-invariant $G$-cluster. From Theorem~\ref{thm:craw} $r_{\chi^{\otimes 2}}=r_\chi^2$ and so $r_\chi^2$ is the furthest character from $r_\chi$ in~the $G$-igsaw piece in~some direction. Note that
\[
\deg (\mR_\rho|_C)=\min\big\{k\colon r_\chi^k\,|\, r_\rho\big\}
\]
and so all the characters in~$\Gig(C)$ appear with multiplicity $1$ except for~$r_\chi^2$, which appears with multiplicity $2$. This gives the required formula.
\end{proof}

As a~result we can immediately deduce the conclusion of \cite[Proposition~3.8]{CrawIshii04} for~$\mfk{C}_0$.

\begin{Corollary} \label{cor:ii} $\mfk{C}_0$ has no Type \texttt{II} walls.
\end{Corollary}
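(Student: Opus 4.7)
The plan is to express the $(1,-3)$-curve inequality of Proposition~\ref{prop:II} as a strictly positive combination of the subsheaf inequality (Lemma~\ref{lem:ci5}) for $\chi^{\otimes 2}$ and the quotient inequality (Proposition~\ref{prop:rigid1}) for a suitable connected exceptional divisor, so that the corresponding hyperplane cannot define a wall of $\mfk{C}_0$.

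Since $C$ is a $(1,-3)$-curve marked with $\chi$, it sits inside an exceptional $\pr^2$ at a trivalent vertex of the Craw--Reid triangulation, and this $\pr^2$ is marked with $\chi^{\otimes 2}$ by Reid's recipe. As $\pr^2$ is a Hirzebruch divisor along the $\chi$-chain, Lemma~\ref{lem:div} furnishes $\chi^{\otimes 2}\in\Gig(C)$. Using this, the inequality of Proposition~\ref{prop:II} rewrites as
\[
\theta\big(\ph_{\mfk{C}_0}(\mO_C)\big) \;=\; \theta\big(\chi^{\otimes 2}\big) \;+\; \sum_{\chi'\in\Gig(C)}\theta(\chi').
\]
The first summand is strictly positive on $\mfk{C}_0$ by the subsheaf inequality of Lemma~\ref{lem:ci5} applied to $\pr^2$. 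The second summand should follow from the quotient inequality of Proposition~\ref{prop:rigid1} applied to a connected exceptional divisor $D'$ containing $C$; together, these exhibit the $C$-inequality as a sum of two strict inequalities already valid on $\mfk{C}_0$, so it is redundant and hence not the defining inequality of a Type \texttt{II} wall.

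The main obstacle lies in the choice of $D'$: the naive choice $D'=\pr^2$ produces a quotient inequality summing over $\bigcup_{i=1}^{3}\Gig(C_i)$ for all three $\chi$-curves $C_1,C_2,C_3$ in the $\pr^2$, which in general strictly contains $\Gig(C)$. I expect to address this by building $D'$ from $\pr^2$ together with the Hirzebruch divisors downstream of $C$ in the sense of Definition~\ref{def:bdry_downstream}, and then using Algorithm~\ref{alg:up} to reconcile the unlocking data: any extra characters contributed by the remaining two $\chi$-curves must themselves be divisor markings, already strictly positive by further subsheaf inequalities, so that the combined system still implies the $C$-inequality. Verifying the combinatorial compatibility between the downstream data of the unlocking procedure and the divisor $D'$ is where the bulk of the technical work sits.
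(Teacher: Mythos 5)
Your overall strategy coincides with the paper's: write the $(1,-3)$-curve inequality as the sum of the subsheaf inequality $\theta\big(\chi^{\otimes 2}\big)>0$ for the exceptional $\pr^2$ and the quotient inequality for a divisor $D'$ containing $C$. The gap sits exactly where you flag it, and your proposed resolution would not close it. The point you are missing is that the premise of your worry is false: for the three $\chi$-curves $C_1,C_2,C_3$ incident to the trivalent vertex one has $\Gig(C_1)=\Gig(C_2)=\Gig(C_3)=\big\{\chi,\chi^{\otimes 2}\big\}\cup\Hirz(\chi)$, so $\bigcup_i\Gig(C_i)=\Gig(C)$ on the nose. Indeed, by Lemma~\ref{lem:div} \emph{every} $\chi$-curve already acquires the characters marking all Hirzebruch divisors along the entire $\chi$-chain, including the two branches it does not lie on (this is the content of the remark following Algorithm~\ref{alg:up} on the meeting of champions of side length $0$); and each $C_i$ is the boundary edge of its branch closest to the trivalent vertex, so every Hirzebruch divisor on its own branch lies between $e_i$ and $C_i$ and is therefore not downstream of $C_i$ in the sense of Definition~\ref{def:bdry_downstream}, while the divisors on the other two branches contribute only their own markings. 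Hence steps \textbf{Re}/\textbf{H2} contribute nothing, the naive choice $D'=\pr^2$ gives a quotient inequality whose character set is exactly $\Gig(C)$ with all multiplicities equal to $1$, and adding the subsheaf inequality $\theta\big(\chi^{\otimes 2}\big)>0$ reproduces the $C$-inequality exactly. This equality of the three $G$-igsaw pieces is the step your write-up needs and does not supply.

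Moreover, the workaround you sketch is logically backwards. Redundancy means exhibiting the $C$-inequality as a nonnegative combination of other valid inequalities (the others must be summands of it, in the terminology preceding Proposition~\ref{prop:-1nec}). If the quotient inequality carried surplus characters beyond $\Gig(C)$, you would be writing $\theta\big(\ph_{\mfk{C}_0}(\mO_C)\big)$ as a sum of valid inequalities \emph{minus} the surplus terms, and knowing that those surplus terms are positive (via further subsheaf inequalities) argues in the wrong direction; enlarging $D'$ by adjoining downstream Hirzebruch divisors only enlarges the character set of the quotient inequality and makes the surplus worse. So the argument must, and in the paper does, pass through the identity $\bigcup_i\Gig(C_i)=\Gig(C)$ rather than around it.
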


\begin{proof} Suppose $C$ is an exceptional $(1,-3)$-curve marked with $\chi$. From the unlocking procedure a~total $G$-igsaw piece for~$C$ consists of $\chi$, $\chi^2$, and the characters marking the (Hirzebruch) divisors along the $\chi$-chain. Let $D'$ be the exceptional $\pr^2$ containing $C$. Consider the inequality for rigid quotients parameterised by $D'$: from Proposition~\ref{prop:rigid} the characters appearing in~this inequality are exactly the characters in~the $G$-igsaw pieces of all three $\chi$-curves converging at~$D'$. These~are
\[
\big\{\chi,\chi^{\otimes 2}\big\}\cup\Hirz(\chi),
\]
which are exactly the characters appearing in~the inequality for~$C$. However, the inequality for rigid quotients parameterised by $D'$ has multiplicities all equal to $1$. When combined with the inequality $\theta\big(\chi^{\otimes 2}\big)>0$ coming from rigid subsheaves parameterised by $D'$~-- note that we can use both inequalities from subsheaves and from quotients since $D'$ is irreducible~-- this implies that the inequality $\ph_{\mfk{C}_0}(\mO_C)>0$ is redundant.
\end{proof}

\subsection[All flops in (-1,-1)-curves]{All flops in $\boldsymbol{(-1,-1)}$-curves}

Using Proposition~\ref{prop:i} and the unlocking procedure one can show directly that every $(-1,-1)$-curve produces a~necessary inequality, recovering \cite[Theorem~9.12]{CrawIshii04} by purely combinatorial means.

In order to test redundancy of inequalities we say that an inequality $\sum_i\alpha_i\theta(\chi_i)>0$ with nonnegative coefficients is a~\textit{summand} of another inequality $\sum_j\beta_j\theta(\rho_j)>0$ with nonnegative coefficients if the difference $\sum_i\alpha_i\theta(\chi_i)-\sum_j\beta_j\theta(\rho_j)$ also has nonnegative coefficients in~the basis~$\on{Irr}{G}$. If an inequality coming from curves or divisors decomposes into other inequalities as summands, then it is redundant and does not define a~wall of $\mfk{C}_0$.

\begin{Proposition} \label{prop:-1nec} Suppose $C$ is an exceptional $(-1,-1)$ curve inside $G\hilb\A^3$. Then the inequality $\theta(\ph_{\mfk{C}_0}(\mO_C))>0$ is necessary and so defines a~wall of $\mfk{C}_0$.
\end{Proposition}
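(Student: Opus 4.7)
The plan is to show that $I_C$ admits no decomposition of the form
\begin{equation*}
I_C \;=\; \sum_k \gamma_k I_k, \qquad \gamma_k \in \Q_{>0},
\end{equation*}
where each $I_k$ is another inequality from the Craw--Ishii list (subsheaf, quotient, or curve inequality) with nonnegative coefficients. By Proposition~\ref{prop:i}, the left-hand inequality is
\begin{equation*}
I_C:\ \theta\big(\ph_{\mfk{C}_0}(\mO_C)\big) \;=\; \sum_{\rho \in \Gig(C)} \theta(\rho) \;>\; 0,
\end{equation*}
with every character in $\Gig(C)$ appearing with coefficient exactly $1$.

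The core idea is to trace the character $\chi$ marking $C$ through any putative such decomposition. Since $C$ is a $(-1,-1)$-curve, the description of Reid's recipe in Section~\ref{sec:rr} together with Theorem~\ref{thm:craw} shows that $\chi$ never labels a compact exceptional divisor. Consequently $\chi$ appears in no subsheaf inequality $\theta(\psi)>0$, and the coefficient of $\chi$ on the right-hand side must come entirely from other curve inequalities $I_{C'}$ or from the quotient inequalities $I_{D'}$ of Proposition~\ref{prop:rigid1}.

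Next I would show that every $I_{C'}$ or $I_{D'}$ in which $\chi$ appears necessarily also contains some character outside $\Gig(C)$. For a curve inequality $I_{C'}$ with $\chi \in \Gig(C')$, running the divisibility analysis behind Lemmas~\ref{lem:piey}, \ref{lem:piex}, and \ref{lem:piez} backwards forces $C'$ either to sit on the $\chi$-chain or to be an ancestor curve that unlocks $C$ recursively; in each case $\Gig(C')$ demonstrably picks up a character not in $\Gig(C)$ (for instance a divisor marker at the far end of the $\chi'$-chain, or $\chi'$ itself when $\chi' \notin \Gig(C)$). The same phenomenon governs the quotient inequalities, because by Proposition~\ref{prop:rigid1} the inequality $I_{D'}$ is the aggregate of $\Gig(C'')$ over curves $C'' \subset D'$ and therefore inherits an extraneous character from whichever component curve produced $\chi$.

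The main obstacle will be to rule out cancellations across several summand inequalities, since extraneous characters sourced from different $I_{C'}$ and $I_{D'}$ might in principle annihilate one another. I would address this by singling out, for each $(-1,-1)$-curve $C$, a distinguished ``leaf'' character $\rho_C \in \Gig(C)$ at the deepest level of the unlocking recursion --- concretely $\chi_\text{dP}(C,D)$ for the del Pezzo divisor $D$ at the extremal end of the $\chi$-chain, or the marker of the terminal Hirzebruch divisor downstream from $C$ --- and checking via a case analysis along the four sub-types Type~\texttt{Ix}, \texttt{Iy}, \texttt{Iz}, and \texttt{Ic} of Section~\ref{sec:comp} that any other inequality containing $\rho_C$ also carries at least one extra character incompatible with the terms in $\Gig(C)$. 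Imposing the coefficient-matching constraints at both $\chi$ and $\rho_C$ simultaneously should then force any alleged decomposition to be inconsistent, so $I_C$ is irredundant and defines a Type~\texttt{I} wall of $\mfk{C}_0$.
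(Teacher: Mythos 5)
Your proposal follows essentially the same route as the paper's proof: write $\theta\big(\ph_{\mfk{C}_0}(\mO_C)\big)$ via the unlocking procedure, observe that any putative decomposition must contain a summand featuring $\chi$, rule out subsheaf inequalities because $\chi$ marks no divisor, and then argue that every curve or quotient inequality featuring $\chi$ carries a character outside $\Gig(C)$. Two remarks. First, the ``main obstacle'' you identify --- cancellation of extraneous characters across several summands --- is not actually an obstacle: every inequality in the Craw--Ishii list for $\mfk{C}_0$ has nonnegative coefficients and your $\gamma_k$ are positive, so the coefficient of any character $\sigma$ on the right-hand side of $I_C=\sum_k\gamma_kI_k$ is a sum of nonnegative terms and vanishes only if $\sigma$ is absent from \emph{every} $I_k$; a single extraneous character in a single summand already contradicts the decomposition, and the entire leaf-character apparatus is unnecessary (the paper does without it). Second, your treatment of the quotient inequalities has a small slip: the component curve $C''\subset D'$ with $\chi\in\Gig(C'')$ may well be $C$ itself, in which case no extraneous character is ``inherited'' from it; the extraneous character must instead be sourced from the \emph{other} curves in $D'$, which is why the paper phrases the key fact as: the unlocking procedure guarantees there is no divisor along the $\chi$-chain all of whose incident curve-markings lie in $\Gig(C)$. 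With that correction, and with the case analysis behind ``every other inequality featuring $\chi$ contains a character outside $\Gig(C)$'' actually carried out (different del Pezzo contributions for $\chi$-curves in the same regular triangle, different unlocked curves otherwise, different behaviour on the boundary), your plan matches the paper's argument.
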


\begin{proof} Suppose $C$ is marked with $\chi$. From the unlocking procedure we can write the inequality corresponding to $C$ in~the form
\begin{gather} \label{eqn:wc1} 
 \theta(\chi)+\sum_i\theta(\psi_i)+\theta(\rho_1)+\sum_i\theta\big(\psi_i^1\big)
 +\dots+\theta(\rho_m)+\sum_i\theta\big(\psi^m_i\big)>0,
\end{gather}
where $\rho_j$ are the characters marking curves $C_j$ unlocked by $C$ and $\psi_i^j$ are the characters in~the $G$-igsaw piece for~$C_j$. Note that curves unlocked by $C$ cannot continue on both sides of the $\chi$-chain, since they meet the $\chi$-chain at a~Hirzebruch divisor found at the intersection of the $\chi$-chain and an edge of a regular triangle, where only two chains can continue. The inequality for the $(-1,-1)$-curve $C_j$ is
\begin{gather*}
\theta\big(\ph_{\mfk{C}_0}(\mO_{C_j})\big)=\theta(\rho_j)+\sum_i\theta\big(\psi_i^j\big)>0.
\end{gather*}
In order to express (\ref{eqn:wc1}) in~terms of other inequalities, we must have an inequality featuring the character $\chi$. These can only arise from other $\chi$-curves or divisors parameterising rigid quotients not featuring $\chi$. Other $\chi$-curves will feature at least one different character in~their $G$-igsaw piece compared to $\Gig(C)$: indeed, other curves in~the same regular triangle will feature a~different collection of del Pezzo divisors, curves in~other regular triangles will either feature different del Pezzo divisors or unlock different curves, and $\chi$-curves along a~boundary edge will have different unlocking behaviour. In~particular, the inequalities from these curves will not be summands of the inequality (\ref{eqn:wc1}). Inequalities from rigid quotients not containing $\chi$ will also not be summands of (\ref{eqn:wc1}) since the unlocking procedure implies that there are no divisors $D_\rho$ along the $\chi$-chain for which all characters marking curves incident to $D_\rho$ are represented in~$\Gig(C)$. It~follows that (\ref{eqn:wc1}) is necessary.
\end{proof}

Proposition~\ref{prop:-1nec} has an analog for Type \texttt{III} walls in~Lemma~\ref{lem:bdry2} where we classify the $(0,-2)$-curves producing those walls in~terms of explicit combinatorics.

\subsection{Irredundant inequalities~-- examples}

The aim of these final sections is to precisely describe all of the walls of $\mfk{C}_0$, which primarily means identifying which curves produce redundant inequalities and subsequently classifying the~walls of Type \texttt{III}. We~start with an example.

\begin{Example} \label{ex:6walls} Consider $G=\frac{1}{6}(1,2,3)$. $G$-Hilb and Reid's recipe are shown in~Fig.~\ref{fig:1/6b}.
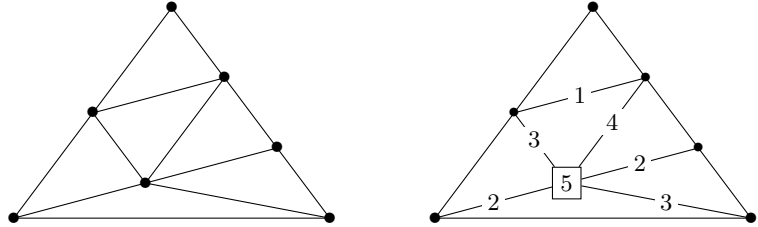
\begin{figure}[h]\centering
\begin{tikzpicture}[scale=0.7]
\node(e1) at (0,2){$\bullet$};
\node(e2) at (3,-2){$\bullet$};
\node(e3) at (-3,-2){$\bullet$};

\node(123) at (-0.5,-4/3){$\bullet$};
\node(240) at (2,2-8/3){$\bullet$};
\node(420) at (1,2-4/3){$\bullet$};
\node(303) at (-1.5,0){$\bullet$};

\draw[-] (e1.center) to (e2.center) to (e3.center) to (e1.center);

\draw[-] (e2.center) to (123.center);
\draw[-] (e3.center) to (240.center);
\draw[-] (303.center) to (420.center);
\draw[-] (123.center) to (303.center);
\draw[-] (123.center) to (420.center);

\node(e1) at (0+8,2){$\bullet$};
\node(e2) at (3+8,-2){$\bullet$};
\node(e3) at (-3+8,-2){$\bullet$};

\footnotesize

\node[draw,fill=white] (123) at (-0.5+8,-4/3){$5$};
\node(240) at (2+8,2-8/3){$\bullet$};
\node(420) at (1+8,2-4/3){$\bullet$};
\node(303) at (-1.5+8,0){$\bullet$};

\draw[-] (e2.center) to node[fill=white,inner sep=2pt] {$3$} (123);
\draw[-] (e3.center) to node[fill=white,inner sep=2pt] {$2$} (123) to node[fill=white,inner sep=2pt] {$2$} (240.center);
\draw[-] (303.center) to node[fill=white,inner sep=2pt] {$1$} (420.center);
\draw[-] (123) to node[fill=white,inner sep=2pt] {$3$} (303.center);
\draw[-] (123) to node[fill=white,inner sep=2pt] {$4$} (420.center);

\draw[-] (e1.center) to (e2.center) to (e3.center) to (e1.center);
\end{tikzpicture}
\caption{$G$-Hilb and Reid's recipe for~$\frac{1}{6}(1,2,3)$.}\label{fig:1/6b}
\end{figure}

We compute the inequalities coming from curves and divisors that define $\mfk{C}_0$ via the unlocking procedure:
\begin{gather}
\label{6:a1} \tag{$A_1$}
\theta(\chi_1)>0, \\
\label{6:a2} \tag{$A_2$}
\theta(\chi_2)+\theta(\chi_5)>0 \\
\label{6:b2} \tag{$B_2$}
\theta(\chi_2)+\theta(\chi_3)+2\theta(\chi_4)+2\theta(\chi_5)>0, \\
\label{6:a3} \tag{$A_3$}
\theta(\chi_3)+\theta(\chi_5)>0, \\
\label{6:b3} \tag{$B_3$}
\theta(\chi_3)+\theta(\chi_4)+\theta(\chi_5)>0, \\
\label{6:a4} \tag{$A_4$}
\theta(\chi_4)>0, \\
\label{6:a5} \tag{$A_5$}
\theta(\chi_5)>0, \\
\label{6:b5} \tag{$B_5$}
\theta(\chi_2)+\theta(\chi_3)+\theta(\chi_4)+\theta(\chi_5)>0.
\end{gather}
(\ref{6:a1}) is from the curve marked with the essential character $1$. Similarly for (\ref{6:a4}). We~then have two inequalities (\ref{6:a2}) and (\ref{6:b2}) coming from the two $2$-curves, and two (\ref{6:a3}) and (\ref{6:b3}) from the two $3$-curves. The $5$-divisor gives two inequalities (\ref{6:a5}) and (\ref{6:b5}) for rigid subsheaves and quotients it parameterises.

We can see that (\ref{6:b2}) is redundant by expressing it as a~combination of (\ref{6:a2}), (\ref{6:a3}) and (\ref{6:a4}). Similarly, (\ref{6:b3}) can be expressed in~terms of (\ref{6:a3}) and (\ref{6:a4}). No further reductions are possible, and so the walls of $\mfk{C}_0$ (with their types) in~this example are
\begin{gather}
\tag{\texttt{I}} \theta(\chi_1)=0, \\
\tag{\texttt{III}} \theta(\chi_2)+\theta(\chi_5)=0, \\
\tag{\texttt{I}} \theta(\chi_3)+\theta(\chi_5)=0, \\
\tag{\texttt{I}} \theta(\chi_4)=0, \\
\tag{\texttt{0}} \theta(\chi_5)=0, \\
\tag{\texttt{0}} \theta(\chi_2)+\theta(\chi_3)+\theta(\chi_4)+\theta(\chi_5)=0.
\end{gather}
\end{Example}

\begin{Example} \label{ex:30walls} We continue with a~more detailed example for~$G=\frac{1}{30}(25,2,3)$. Continuing the calculations in~Section~\ref{ex:30}, we find that the inequalities from curves in~$G$-Hilb are
\begin{gather}
\label{30:a2} \tag{$A_2$} \theta_2+\theta_{27}+\theta_{22}+\theta_{17}>0, \\
\label{30:b2} \tag{$B_2$} \theta_2+\theta_5+\theta_8+\theta_{11}+\theta_{14}>0, \\
\label{30:a3} \tag{$A_3$} \theta_3+\theta_{13}+\theta_{18}+\theta_{23}+\theta_{28}>0, \\
\label{30:b3} \tag{$B_3$} \theta_3+\theta_5+\theta_7+\theta_9+\theta_{11}+\theta_{13}+\theta_{23}+\theta_{28}>0, \\
\label{30:c3} \tag{$C_3$} \theta_3+\theta_5+\theta_7+\theta_9+\theta_{11}+\theta_{13}+\theta_{15}+\theta_{17}+\theta_{19}+\theta_{21}>0, \\
\label{30:a4} \tag{$A_4$} \theta_4+\theta_{29}+\theta_{24}+\theta_{19}+\theta_{14}>0, \\
\label{30:b4} \tag{$B_4$} \theta_4+\theta_7+\theta_{29}+\theta_{24}+\theta_{19}>0, \\
\label{30:c4} \tag{$C_4$} \theta_4+\theta_7+\theta_{10}+\theta_{13}+\theta_{16}+\theta_{19}+\theta_{29}>0, \\
\label{30:d4} \tag{$D_4$} \theta_4+\theta_7+\theta_{10}+\theta_{13}+\theta_{16}+\theta_{19}+\theta_{22}>0, \\
\label{30:a5} \tag{$A_5$} \theta_5+\theta_7+\theta_9+\theta_{11}>0, \\
\label{30:b5} \tag{$B_5$} \theta_5+\theta_7+\theta_8+\theta_{11}>0, \\
\label{30:c5} \tag{$C_5$} \theta_5+\theta_8+\theta_{11}+\theta_{14}>0, \\
\theta_6+\theta_8+\theta_9+\theta_{10}+\theta_{11}+\theta_{13}+2\theta_{12}
+2\theta_{14}+2\theta_{16}+2\theta_{15}+2\theta_{17}+2\theta_{19}+3\theta_{18}\nonumber
\\ \phantom{\theta_6}
{}+3\theta_{20}\,{+}\,3\theta_{22}\,{+}\,3\theta_{21}\,{+}\,3\theta_{23}\,{+}\,3\theta_{25}\,{+}\,4\theta_{24}
\,{+}\,4\theta_{26}\,{+}\,4\theta_{28}\,{+}\,4\theta_{27}\,{+}\, 4\theta_{29}\,{+}\, 4\theta_1>0,
\label{30:a6} \tag{$\mathbf{A_6}$}
\\
\theta_6+\theta_8+\theta_{10}+2\theta_{12}+2\theta_{14}+2\theta_{16}
+3\theta_{18}+\theta_9+\theta_{11}+\theta_{13}+2\theta_{15}+2\theta_{17} \nonumber
\\ \phantom{\theta_6}
{}+2\theta_{19}+3\theta_{21}+\theta_1+4\theta_{26}+2\theta_{16}+3\theta_{23}
+3\theta_{20}+3\theta_{22}+4\theta_{24}+4\theta_{26}>0,\label{30:b6} \tag{$\mathbf{B_6}$}
\\
\theta_6+\theta_8+\theta_{10}+2\theta_{12}+2\theta_{14}+2\theta_{16}
+3\theta_{18}+\theta_9+\theta_{11}+\theta_{13}\nonumber
\\ \phantom{\theta_6}
{}+2\theta_{15}+2\theta_{17}+2\theta_{19}+3\theta_{21}+\theta_1
+\theta_{26}+\theta_{21}+\theta_{16}>0,\label{30:c6} \tag{$\mathbf{C_6}$}
\\
\label{30:d6} \tag{$\mathbf{D_6}$} \theta_6+\theta_8+\theta_{10}+2\theta_{12}+2\theta_{14}+2\theta_{16}+\theta_1
+\theta_{26}+\theta_{21}+\theta_{16}+\theta_9+\theta_{11}+\theta_{13}>0, \\
\label{30:e6} \tag{$\mathbf{E_6}$} \theta_6+\theta_1+\theta_{26}+\theta_{21}+\theta_{16}+\theta_{11}+\theta_8+\theta_9>0, \\
\label{30:f6} \tag{$\mathbf{F_6}$} \theta_6+\theta_1+\theta_{26}+\theta_{21}+\theta_{16}+\theta_{11}>0, \\
\label{30:a8} \tag{$A_8$} \theta_8>0, \\
\label{30:a9} \tag{$A_9$} \theta_9>0, \\
\label{30:a10} \tag{$A_{10}$} \theta_{10}+\theta_{13}+\theta_{16}>0, \\
\label{30:b10} \tag{$\mathbf{B_{10}}$} \theta_{10}+\theta_{12}+\theta_{14}+\theta_{16}+\theta_{18}+\theta_5
+\theta_7+\theta_9+\theta_{11}+\theta_{13}>0, \\
\label{30:c10} \tag{$\mathbf{C_{10}}$} \theta_{10}
+\theta_{13}+\theta_{12}+\theta_{14}+\theta_{16}>0, \\
\label{30:a12} \tag{$A_{12}$} \theta_{12}+\theta_7>0, \\
\label{30:b12} \tag{$B_{12}$} \theta_{12}+\theta_{14}>0, \\
\label{30:a15} \tag{$A_{15}$} \theta_{15}+\theta_{17}+\theta_{19}+\theta_{21}>0, \\
\label{30:b15} \tag{$\mathbf{B_{15}}$} \theta_{15}+\theta_{17}+\theta_{19}+\theta_{18}+\theta_{21}>0, \\
\label{30:c15} \tag{$\mathbf{C_{15}}$} \theta_{15}+\theta_{17}+\theta_{18}+\theta_{21}+\theta_{24}
+\theta_{10}+\theta_{13}+\theta_{16}+\theta_{19}>0,
\\
\theta_{15}+\theta_{18}+\theta_{21}+\theta_{24}+\theta_{27}+\theta_{10}+\theta_{13}+\theta_{16} +\theta_{19}+\theta_{22}+\theta_5+\theta_8\nonumber
\\ \phantom{\theta_{15}}
+\theta_{11}+\theta_{14}+\theta_{17}>0, \label{30:d15} \tag{$\mathbf{D_{15}}$}
\\
\label{30:a18} \tag{$A_{18}$} \theta_{18}>0, \\
\label{30:a20} \tag{$A_{20}$} \theta_{20}+\theta_{23}+\theta_{26}+\theta_{29}>0, \\
\label{30:b20} \tag{$B_{20}$} \theta_{20}+\theta_{22}+\theta_{23}+\theta_{26}>0, \\
\label{30:c20} \tag{$\mathbf{C_{20}}$} \theta_{20}+\theta_{23}+\theta_{22}+\theta_{24}+\theta_{26}>0, \\
\label{30:d20} \tag{$\mathbf{D_{20}}$} \theta_{20}+\theta_{15}+\theta_{17}+\theta_{19}
+\theta_{21}+\theta_{22}+\theta_{24}+\theta_{26}+\theta_{28}>0, \\
\label{30:a24} \tag{$A_{24}$} \theta_{24}>0, \\
\label{30:a25} \tag{$A_{25}$} \theta_{25}+\theta_{27}+\theta_{29}+\theta_1>0, \\
\label{30:b25} \tag{$B_{25}$} \theta_{25}+\theta_{28}+\theta_1>0, \\
\label{30:a27} \tag{$A_{27}$} \theta_{27}+\theta_{22}>0, \\
\label{30:b27} \tag{$B_{27}$} \theta_{27}+\theta_{29}>0, \\
\label{30:a28} \tag{$A_{28}$} \theta_{28}>0.
\end{gather}

The bolded inequalities correspond to curves $C$ with $\mathcal{N}_C$ not of type $(-1,-1)$. We~know by \cite[Theorem~9.12]{CrawIshii04} that the other inequalities are necessary and define Type \texttt{I} walls of $\mfk{C}_0$. The inequalities from divisors parameterising rigid subsheaves are
\begin{gather}
\label{30:a1} \tag{$A_1$} \theta_1>0, \\
\label{30:a7} \tag{$A_7$} \theta_7>0, \\
\label{30:a11} \tag{$A_{11}$} \theta_{11}>0, \\
\label{30:a13} \tag{$A_{13}$} \theta_{13}>0, \\
\label{30:a14} \tag{$A_{14}$} \theta_{14}>0, \\
\label{30:a16} \tag{$A_{16}$} \theta_{16}>0, \\
\label{30:a17} \tag{$A_{17}$} \theta_{17}>0, \\
\label{30:a19} \tag{$A_{19}$} \theta_{19}>0, \\
\label{30:a21} \tag{$A_{21}$} \theta_{21}>0, \\
\label{30:a22} \tag{$A_{22}$} \theta_{22}>0, \\
\label{30:a23} \tag{$A_{23}$} \theta_{23}>0, \\
\label{30:a26} \tag{$A_{26}$} \theta_{26}>0, \\
\label{30:a29} \tag{$A_{29}$} \theta_{29}>0.
\end{gather}

We record the redundancies for the bold (or~potentially redundant) inequalities:
\begin{gather*}
\begin{array}{lcl}
\text{(\ref{30:f6})}+\text{(\ref{30:a8})}+\text{(\ref{30:a9})}+\text{(\ref{30:a10})}
+\text{(\ref{30:b12})}+\text{(\ref{30:a15})}
+\text{(\ref{30:a18})}+\text{(\ref{30:a20})}+\text{(\ref{30:a24})}
\\ \phantom{\text{(\ref{30:f6})}}
{}+\text{(\ref{30:a25})}+\text{(\ref{30:b27})}+\text{(\ref{30:a28})}
&\Longrightarrow&\text{(\ref{30:a6})}, \\
\text{(\ref{30:f6})}+\text{(\ref{30:a8})}+\text{(\ref{30:a9})}+\text{(\ref{30:a10})}+\text{(\ref{30:b12})} +\text{(\ref{30:a15})}+\text{(\ref{30:a18})}+\text{(\ref{30:a20})}+\text{(\ref{30:a24})}&\Longrightarrow&\text{(\ref{30:b6})}, \\
\text{(\ref{30:f6})}+\text{(\ref{30:a8})}+\text{(\ref{30:a9})}+\text{(\ref{30:a10})}+\text{(\ref{30:b12})}+\text{(\ref{30:a15})} +\text{(\ref{30:a18})}&\Longrightarrow&\text{(\ref{30:c6})}, \\
\text{(\ref{30:f6})}+\text{(\ref{30:a8})}+\text{(\ref{30:a9})}+\text{(\ref{30:a10})}+\text{(\ref{30:b12})}& \Longrightarrow&\text{(\ref{30:d6})}, \\
\text{(\ref{30:f6})}+\text{(\ref{30:a8})}+\text{(\ref{30:a9})}&\Longrightarrow&\text{(\ref{30:e6})}, \\
\text{(\ref{30:a5})}+\text{(\ref{30:b12})}+\text{(\ref{30:a18})}&\Longrightarrow&\text{(\ref{30:b10})}, \\
\text{(\ref{30:a10})}+\text{(\ref{30:b12})}&\Longrightarrow&\text{(\ref{30:c10})}, \\
\text{(\ref{30:a15})}+\text{(\ref{30:a18})}&\Longrightarrow&\text{(\ref{30:b15})}, \\
\text{(\ref{30:a15})}+\text{(\ref{30:a18})}+\text{(\ref{30:a10})}+\text{(\ref{30:a24})}&\Longrightarrow&\text{(\ref{30:c15})}, \\
\text{(\ref{30:a15})}+\text{(\ref{30:a18})}+\text{(\ref{30:a10})}+\text{(\ref{30:a24})}+\text{(\ref{30:a27})}+\text{(\ref{30:c5})} &\Longrightarrow&\text{(\ref{30:d15})}, \\
\text{(\ref{30:b20})}+\text{(\ref{30:a24})}&\Longrightarrow&\text{(\ref{30:c20})}, \\
\text{(\ref{30:a15})}+\text{(\ref{30:b20})}+\text{(\ref{30:a24})}&\Longrightarrow&\text{(\ref{30:d20})}.
\end{array}
\end{gather*}
We have killed off the inequalities from all curves except for the $(-1,-1)$-curves and one curve~(\ref{30:f6}) from the long side.
\end{Example}

\subsection{Redundant inequalities from curves}

Observe that the vast majority of inequalities in~Examples \ref{ex:6walls}--\ref{ex:30walls} define walls of Type \texttt{I}. We~should be unsurprised by the cancellation of all except one bolded inequality in~Example~\ref{ex:30walls} due to the following result from~\cite{CrawIshii04}.

\begin{Lemma}[{\cite[Corollaries 6.3 and 6.5]{CrawIshii04}}] \label{lem:coeff} Suppose $w=(\sum\alpha_i\theta_i=0)$ is a~Type \texttt{I} or \texttt{III} wall of $\mfk{C}_0$. Then all $\alpha_i\in\{0,1\}$.
\end{Lemma}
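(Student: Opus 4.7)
The plan is to reduce to showing that $\deg(\mathcal{R}_\rho|_C)\in\{0,1\}$ for the curve $C$ producing the wall, and then invoke the explicit computation of tautological line bundles from Reid's recipe via Theorem \ref{thm:craw}. By Corollary 5.2 of \cite{CrawIshii04} (recalled in Section \ref{sec:taut}),
\[
\theta(\ph_{\mfk{C}_0}(\mO_C))=\sum_\rho\deg(\mR_\rho|_C)\theta(\rho),
\]
so the coefficients $\alpha_i$ appearing in the wall equation are exactly these degrees. The task is therefore to control them.

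For Type \texttt{I} walls, $C$ is a $(-1,-1)$-curve marked with some character $\chi$. First I would use the combinatorial results already established in Section \ref{sec:comp} (in particular Proposition \ref{prop:i}, which asserts that the inequality is $\sum_{\chi\in\Gig(C)}\theta(\chi)>0$ with all multiplicities equal to $1$). This is essentially the content of \cite[Corollary~6.3]{CrawIshii04}: comparing monomial generators $r_\rho$ on the two triangles adjacent to the edge for $C$ using the $G$-igsaw analysis (Lemmas \ref{lem:piey}, \ref{lem:piex}, \ref{lem:piez}) shows that $r_\rho$ either agrees across $C$, giving $\deg(\mR_\rho|_C)=0$ for $\rho\notin\Gig(C)$, or differs by exactly one eigenmonomial step along the $\chi$-chain, giving degree $1$ for $\rho\in\Gig(C)$.

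For Type \texttt{III} walls, $C$ is a $(0,-2)$-boundary curve. The approach is parallel: compute $\deg(\mR_\rho|_C)$ via the local monomial generators near the boundary edge for $C$. By the analysis in Section \ref{sec:comp} (the boundary-curve subsection), the generators $r_\rho$ restricted to $C$ move by one variable factor if and only if $\rho\in\Gig(C)$, yielding again only degrees $0$ or $1$. This recovers \cite[Corollary~6.5]{CrawIshii04} from our combinatorial framework.

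The main obstacle is handling the large number of local configurations in the Type \texttt{III} case, since a boundary curve may lie in various positions relative to corner triangles, meeting-of-champions triangles, or the ends of a $\chi$-chain. The key point that makes the degree bound robust is that the Craw--Reid triangulation is unimodular: any increment in the exponent of a single variable between adjacent $G$-clusters is exactly $1$. Together with Theorem \ref{thm:craw} encoding divisibility relations between tautological generators, this rules out any degree $\ge 2$ contribution, completing the argument.
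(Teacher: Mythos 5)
The paper does not prove this lemma at all: it is imported verbatim from \cite[Corollaries~6.3 and 6.5]{CrawIshii04}, so any argument you supply is necessarily a different route from the paper's. Your reduction via $\theta(\ph_{\mfk{C}_0}(\mO_C))=\sum_\rho\deg(\mR_\rho|_C)\theta(\rho)$ is the right starting point, and your direct argument for Type \texttt{I} --- reading off from the explicit monomial lists in Lemmas~\ref{lem:piey}, \ref{lem:piex} and \ref{lem:piez} that every monomial in a total $G$-igsaw piece for a $(-1,-1)$-curve is $r_\chi$ times a monomial not divisible by $r_\chi$, so that $\deg(\mR_\rho|_C)\le 1$ --- is sound, and is essentially the independent re-derivation the paper itself gives later in Lemma~\ref{lem:sq} and Lemma~\ref{lem:bdry2}. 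Two corrections are needed, however. First, you must not invoke Proposition~\ref{prop:i}: in the paper's logical order that proposition is justified precisely by citing \cite[Corollary~6.3]{CrawIshii04}, i.e., by the very statement you are proving, so that step is circular; drop it and rely only on the Section~\ref{sec:comp} computations. Second, your closing ``key point'' is misattributed: unimodularity of the Craw--Reid triangulation is not what rules out coefficients $\ge 2$, since the $(1,-3)$-curves sit in the same unimodular triangulation and carry the coefficient $2$ on $\chi^{\otimes 2}$ (Proposition~\ref{prop:II}). The correct mechanism is that $\deg(\mR_\rho|_C)\ge 2$ forces $r_\chi^2\,|\,r_\rho$, hence $\chi^{\otimes 2}\in\Gig(C)$, and for Type \texttt{I} and \texttt{III} curves this cannot happen: it is excluded by the explicit $G$-igsaw piece descriptions in the $(-1,-1)$ case, and in the Type \texttt{III} case by the observation of Lemma~\ref{lem:bdry2} that a Hirzebruch divisor marked $\chi^{\otimes 2}$ along the $\chi$-chain would require the chain to cross itself. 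For Type \texttt{III} you should also lean on that final-curve description explicitly rather than asserting that the generators ``move by one variable factor'', since the paper only sketches the boundary-curve analysis.
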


Chambers other than $\mfk{C}_0$ can have coefficients $\alpha_i=-1$, however since the trivial representation does not appear in~$\Gig(C)$ for any curve $C$ we can exclude this possibility.

\begin{Corollary} Suppose $G\hilb\A^3$ has a~meeting of champions of side length $0$. Then the inequality for any curve along one of the three champions is redundant.
\end{Corollary}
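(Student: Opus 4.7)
The plan is to show redundancy of the inequality $I_C$ for any curve $C$ along a champion by decomposing $I_C$ as a non-negative combination of other valid inequalities defining $\mfk{C}_0$. I will split the argument into two cases based on whether $C$ is a $(1,-3)$-curve incident to the central $\pr^2$ or a $(-1,-1)$-curve or boundary curve further along a champion.

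In the first case, $C$ is one of the three $(1,-3)$-curves inside the exceptional $\pr^2$ at the trivalent meeting point. The redundancy of $I_C$ then follows directly from Corollary~\ref{cor:ii}, or more precisely from the decomposition in its proof: $I_C$ equals the quotient inequality for the $\pr^2$ divisor from Proposition~\ref{prop:rigid1} plus the subsheaf inequality $\theta(\chi^{\otimes 2})>0$ from Lemma~\ref{lem:ci5}.

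In the second case, I will exploit the description of $\Gig(C)$ given in the remark after Algorithm~\ref{alg:up}: for $C$ on a champion $B_1$, the Gig piece is the output of unlocking $B_1$ alone, augmented by the Hirzebruch characters along the other two branches $B_2, B_3$. Since the paper's convention treats the central $\pr^2$ as a Hirzebruch divisor with $\chi^{\otimes 2}\in\Hirz(\chi)$, the quotient inequality
\begin{equation*}
I_{\pr^2}^{\text{quot}}=\theta(\chi)+\sum_{\psi\in\Hirz(\chi)}\theta(\psi)>0
\end{equation*}
accounts for $\chi$ and all Hirzebruch characters along every branch of the $\chi$-chain. Subtracting $I_{\pr^2}^{\text{quot}}$ from $I_C$ leaves a sum of terms $\theta(\chi')$ over characters arising as (i) chosen del Pezzo characters $\chi_\text{dP}(C,D)$ along $B_1$, or (ii) characters from the recursive unlocking of curves downstream of $C$ on $B_1$. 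Subsheaf inequalities from Lemma~\ref{lem:ci5} handle (i); the necessary $(-1,-1)$-curve inequalities from Proposition~\ref{prop:-1nec} together with further subsheaf inequalities handle (ii).

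The main obstacle will be the careful accounting of coefficients in this decomposition. For $(-1,-1)$-curves, Lemma~\ref{lem:coeff} guarantees that each character of $\Gig(C)$ appears in $I_C$ with coefficient exactly $1$, so any over-counting from overlapping downstream Gig pieces must be ruled out. I expect the recursive tree structure of Algorithm~\ref{alg:up} restricted to $B_1$ to provide the required partition: each Hirzebruch divisor $D$ downstream of $C$ on $B_1$ unlocks a distinct collection of sub-branches whose recursive Gig pieces nest without repetition, so the extra characters in $\Gig(C)\setminus\bigl(\{\chi\}\cup\Hirz(\chi)\bigr)$ are in bijection with the terms contributed by the del Pezzo and downstream inequalities.
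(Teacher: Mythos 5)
Your proposal is correct in outline but takes a genuinely different, and much longer, route than the paper. The paper's own proof is two lines and indirect: since the three champions all carry the same character $\chi$ and meet at a trivalent vertex, Theorem~\ref{thm:craw} gives $r_{\chi^{\otimes 2}}=r_\chi^2$ globally, hence $\deg(\mR_{\chi^{\otimes 2}}|_C)=2$ for \emph{every} $\chi$-curve $C$; a necessary curve inequality would define a wall of Type \texttt{I}, \texttt{II} or \texttt{III}, Type \texttt{II} is excluded, and Lemma~\ref{lem:coeff} forces all coefficients of a Type \texttt{I} or \texttt{III} wall to lie in $\{0,1\}$, so the coefficient $2$ on $\theta(\chi^{\otimes 2})$ alone rules out necessity. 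The indirect argument buys a proof with no bookkeeping; your constructive decomposition buys an explicit certificate of redundancy, and the paper itself remarks immediately afterwards that one ``can also show this directly via unlocking,'' so your route is legitimate and in the intended spirit. Two points in your sketch need adjusting for it to close. First, a champion consists entirely of boundary edges of regular triangles, so there are no del Pezzo divisors along the $\chi$-chain and your item (i) is vacuous; conversely, after subtracting the quotient inequality the surviving terms include $(\deg(\mR_\psi|_C)-1)\,\theta(\psi)$ for Hirzebruch characters $\psi$ --- in particular a leftover $\theta(\chi^{\otimes 2})$ with coefficient $1$ --- and these must be absorbed by the subsheaf inequalities of Lemma~\ref{lem:ci5}, which your accounting of the remainder omits. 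Second, the ``partition/bijection'' you hope for is not the right mechanism and need not hold: downstream $G$-igsaw pieces can overlap, and what actually makes the recursion work is the monotonicity of Lemma~\ref{lem:mult}, which guarantees that every coefficient remaining after peeling off a downstream curve inequality is nonnegative regardless of overlaps; this is precisely how the paper's Lemma~\ref{lem:bdry} runs the same recursion, and you can invoke that argument essentially verbatim instead of constructing a partition. (Also, your appeal to Lemma~\ref{lem:coeff} for the fact that all degrees equal $1$ on a $(-1,-1)$-curve should instead cite \cite[Corollary~6.3]{CrawIshii04} as in Proposition~\ref{prop:i}.)
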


\begin{proof} Suppose $\chi$ is the character marking each of the champions. Then, by Theorem~\ref{thm:craw}, $r_\chi^2=r_{\chi^2}$ globally on~$G$-Hilb and so $\deg (\mR_{\chi^2}|_C)=2$ for all $\chi$-curves $C$. It~follows from Lemma~\ref{lem:coeff} that none of these inequalities can be strict.
\end{proof}

We can also show this directly via unlocking. This reproves Corollary~\ref{cor:ii}.

\begin{Lemma} \label{lem:sq} Suppose $C$ is a~$\chi$-curve. If the unlocking procedure for~$C$ doesn't unlock a~curve or divisor marked with $\chi^2$ then all the coefficients in~the inequality $\theta(\ph_{\mfk{C}_0}(\mO_C))>0$ are equal to~$0$ or~$1$.
\end{Lemma}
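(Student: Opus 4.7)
The plan is to use the coefficient formula $\deg(\mR_\rho|_C)=\min\{k : r_\chi^k \mid r_\rho\}$ observed in the proof of Proposition~\ref{prop:II} and to argue that any coefficient exceeding $1$ forces $\chi^2$ into $\Gig(C)$. Fix a $\chi$-triangle $\tau$ adjacent to $C$ and work with the eigenmonomials $r_\chi = r_\chi^\tau$ and $r_\rho = r_\rho^\tau$ of the torus-invariant $G$-cluster $Z_\tau$. Suppose toward contradiction that some coefficient is at least $2$, so there is $\rho \in \Gig(C)$ with $r_\chi^2 \mid r_\rho$ as monomials in $\C[x,y,z]$.

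The crux of the argument is that the monomial $r_\chi^2$ cannot lie in the defining ideal $I_{Z_\tau}$ of the cluster. Since $r_\rho$ is an element of the monomial basis of $\C[x,y,z]/I_{Z_\tau}$ we have $r_\rho \notin I_{Z_\tau}$; but $r_\chi^2 \in I_{Z_\tau}$ together with $r_\rho = r_\chi^2 \cdot m$ would force $r_\rho \in I_{Z_\tau}$, a contradiction. Therefore $r_\chi^2$ is a monomial of character $\chi^2$ which survives in the cluster basis, and the uniqueness of the basis element in each eigenspace forces $r_\chi^2 = r_{\chi^2}$ on $\tau$. In particular, $r_\chi \mid r_{\chi^2}$, so $\chi^2 \in \Gig(C)$ by the alternative description of the total $G$-igsaw piece in Section~\ref{sec:comp} (characters of eigenmonomials in $Z_\tau$ divisible by $r_\chi$). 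By the validity of Algorithm~\ref{alg:up} (Theorem~\ref{thm:pri}), any character of $\Gig(C)$ is added to $S$ through a curve or divisor encountered by the procedure, so $\chi^2$ must mark such a curve or divisor, contradicting the hypothesis.

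I anticipate no serious obstacle; the argument cleanly separates into a monomial divisibility step on a single $\chi$-triangle and a bookkeeping step for the unlocking procedure. The only mild subtlety is verifying that ``$\chi^2$ is not unlocked'' genuinely implies $\chi^2 \notin \Gig(C)$. This follows by recursively tracing the three sources (steps \textbf{dP}, \textbf{H1}, and \textbf{H2} of Algorithm~\ref{alg:up}) through which characters can enter $S$: each requires a curve or divisor bearing the corresponding marking, so the hypothesis propagates through the recursion and excludes $\chi^2$ from $\Gig(C)$.
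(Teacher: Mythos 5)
Your proof is correct and follows essentially the same route as the paper's (one-sentence) argument: a coefficient $\geq 2$ forces $r_\chi^2\mid r_\rho$, hence $r_\chi^2$ survives in the cluster basis, equals $r_{\chi^2}$, and so $\chi^2\in\Gig(C)$, contradicting the hypothesis via the validity of the unlocking procedure. You merely make explicit the two steps the paper leaves implicit, namely that the complement of a monomial ideal is closed under taking divisors and that every character entering $S$ does so by marking an encountered curve or divisor.
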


\begin{proof} This is because if some $\rho$ has $\deg (\mR_\rho|_C)\geq 2$ then $r_\chi^2\,|\, r_\rho$ and so $r_\chi^2$ must feature in~the $G$-igsaw piece for~$C$ and is hence equal to $r_{\chi^2}$ near $C$.
\end{proof}

\begin{Lemma} \label{lem:mult} Suppose a~curve $C_0$ unlocks a~curve $C_1$ of character $\rho$. Let $\psi\in\Gig(C_1)$. If $C$ is a~curve that unlocks $C_0$, then $\deg (\mR_\psi|_C)\geq\deg (\mR_\rho|_C)$.
\end{Lemma}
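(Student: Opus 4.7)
The plan is to reduce the degree inequality to a divisibility statement between local generators of $\mR_\rho$ and $\mR_\psi$, then propagate the divisibility from a triangle adjacent to $C_1$ to one adjacent to $C$ using the recursive structure of Algorithm~\ref{alg:up}.

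First, I would invoke the degree formula $\deg(\mR_\sigma|_C) = \max\{k : r_\chi^k \mid r_\sigma^{(\tau)}\}$ for any $\sigma \in \Gig(C)$ and any triangle $\tau$ adjacent to $C$, where $\chi$ marks $C$ and the divisibility is between monomials in $\mathbb{C}[x,y,z]$. This formula is implicit in the proof of Proposition~\ref{prop:II} and follows from the toric description of the tautological bundles at a toric curve. With this in hand, the lemma reduces to showing that $r_\rho^{(\tau)} \mid r_\psi^{(\tau)}$ on some triangle $\tau$ adjacent to $C$: given such divisibility, every power of $r_\chi$ dividing $r_\rho^{(\tau)}$ automatically divides $r_\psi^{(\tau)}$.

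Second, the divisibility $r_\rho^{(\tau_1)} \mid r_\psi^{(\tau_1)}$ is immediate on any triangle $\tau_1$ adjacent to $C_1$ by the very definition of $\Gig(C_1)$: the total $G$-igsaw piece is the set of monomials in the $G$-cluster at $\tau_1$ divisible by the central monomial $r_\rho^{(\tau_1)}$. To transfer this divisibility to a triangle $\tau$ adjacent to $C$, I would walk along a path of triangles in the Craw--Reid triangulation from $\tau_1$ to $\tau$ that stays within the subtree of unlocking of $C$, that is, only crosses curves in the cascade of curves unlocked by $C$ through $C_0$ and then $C_1$. At each step the path crosses an exceptional curve $D$ of some character $\sigma_D$, effecting a $G$-igsaw transformation that multiplies precisely the generators $r_\nu^{(\cdot)}$ with $\nu \in \Gig(D)$ by the fixed factor $r_{\sigma_D}^{\mathrm{new}}/r_{\sigma_D}^{\mathrm{old}}$ and leaves the others unchanged.

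Third, I would verify the uniformity claim: for every curve $D$ crossed along this chosen path, either $\{\rho,\psi\} \subseteq \Gig(D)$ or $\{\rho,\psi\} \cap \Gig(D) = \emptyset$. Granted this, the ratio $r_\psi^{(\cdot)}/r_\rho^{(\cdot)}$ is preserved across every step and the divisibility transfers to $\tau$. The uniformity is plausible from the tree structure of unlocking: $\rho$ lies on the unique path from $C$ to $\psi$ in this tree, so any $D$ whose total $G$-igsaw piece sees $\psi$ must also see $\rho$ through the nesting $\Gig(C_1) \subseteq \Gig(C_0) \subseteq \Gig(C)$ produced by the recursive step (\textbf{Re})(\textbf{H2}) of Algorithm~\ref{alg:up}, and symmetrically for the converse. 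The main obstacle will be making this uniformity step fully rigorous: selecting the path of triangles carefully and ruling out mixed cases where one of $\rho,\psi$ lies in $\Gig(D)$ while the other does not. I expect this to be the technical heart of the argument and to follow from a case analysis of the local combinatorics near each crossed curve, with the more delicate cases arising when the path crosses near del Pezzo divisors or boundary configurations where several chains of different characters meet.
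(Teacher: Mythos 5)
Your first two steps are exactly the paper's proof: it records the formula $\deg(\mR_\sigma|_C)=\max\{k\colon r_\chi^k\mid r_\sigma\}$ and then observes that $r_\rho\mid r_\psi$ by the definition of the total $G$-igsaw piece, so the degree inequality is immediate. The locality issue you isolate in your third step (transferring the divisibility from a triangle adjacent to $C_1$ to one adjacent to $C$) is a fair point that the paper's two-line proof elides, but it does not require your path-and-uniformity argument: the validity of the unlocking procedure established in Section~\ref{sec:comp} already shows that the monomials of $\Gig(C_1)$ sit inside the total $G$-igsaw piece of $C$ \emph{on a triangle adjacent to $C$}, i.e., that the relevant divisibility relations persist there (this is checked explicitly, e.g., in the proof of Lemma~\ref{lem:ix'}), so the ``technical heart'' you anticipate is already done elsewhere and your proof closes once you cite it.
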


\begin{proof} As used previously, $\deg (\mR_\rho|_C)=\max \big\{k\in\Z_{\geq0}\colon r_\chi^k\,|\, r_\rho \big\}$. From this formulation, clearly if $r_\rho\,|\, r_\psi$ then $\deg (\mR_\psi|_C)\geq\deg (\mR_\rho|_C)$, but this is the case by definition of $G$-igsaw piece.
\end{proof}

\begin{Lemma} \label{lem:bdry} Suppose $C$ is a~curve on the boundary of a regular triangle marked with a~character $\chi$. Suppose the $\chi$-chain contains a~$(-1,-1)$-curve. Then the inequality $\theta(\ph_{\mfk{C}_0}(\mO_C))>0$ is redundant.
\end{Lemma}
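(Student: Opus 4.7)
The plan is to select a $(-1,-1)$-curve $C^\star$ on the $\chi$-chain and write
\[
\theta(\ph_{\mfk{C}_0}(\mO_C)) = \theta(\ph_{\mfk{C}_0}(\mO_{C^\star})) + \sum_{i} \theta(\psi_i) + \sum_{j} \theta(\ph_{\mfk{C}_0}(\mO_{E_j})),
\]
where each $\psi_i$ is an essential character marking an exceptional divisor, so that $\theta(\psi_i)>0$ is a necessary subsheaf inequality by Lemma~\ref{lem:ci5}, and each $E_j$ is a $(-1,-1)$-curve, so that its inequality is a necessary Type~\texttt{I} inequality by Proposition~\ref{prop:-1nec}. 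Since the inequality for $C^\star$ is also necessary by Proposition~\ref{prop:-1nec}, realising the inequality for $C$ as a positive combination of necessary inequalities would establish its redundancy.

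As a first step I would verify that both sides of the above identity have coefficients in $\{0,1\}$ using Lemma~\ref{lem:sq}. It is enough to check that $\chi^2$ is not unlocked by $C$ or $C^\star$: a character of the form $\chi^2$ marks a subvariety only when $\chi$ labels three edges meeting at a trivalent vertex, which happens precisely when the meeting of champions triangle has side length $0$ and $\chi$ is a champion, but then the $\chi$-chain consists of $(1,-3)$-curves and contains no $(-1,-1)$-curve, contradicting the hypothesis.

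Next I would compare $\Gig(C)$ and $\Gig(C^\star)$ via Algorithm~\ref{alg:up}. By Lemma~\ref{lem:div}, both sets contain $\chi$ together with one character from each divisor along the $\chi$-chain, so these contribute identically to the two inequalities. The difference arises at step \textbf{Re}: at each Hirzebruch divisor $D$ on the $\chi$-chain, the interior curve $C^\star$ unlocks only those $(-1,-1)$-curves whose edge meets the tail of the $\chi$-arrow in $\Xi_{C^\star}$, whereas by Definition~\ref{def:bdry_downstream} the boundary curve $C$ unlocks all broken chains at $D$ along the boundary portion of the $\chi$-chain. Every additionally unlocked curve $E$ lies inside a regular triangle and is therefore a $(-1,-1)$-curve, with $\Gig(E)$ inserted wholesale into $\Gig(C)$. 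I would take the $E_j$ to be these curves, and the $\psi_i$ to be the essential characters still missing from $\Gig(C^\star)\cup\bigcup_j\Gig(E_j)$.

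The principal obstacle is the combinatorial bookkeeping: the $\Gig(E_j)$ can overlap with one another and with $\Gig(C^\star)$, so a careful selection is required to ensure every character in $\Gig(C)$ receives total coefficient exactly $1$ on the right-hand side. Lemma~\ref{lem:mult} controls the nested divisibility relations $r_\chi\mid r_\rho\mid r_\psi$ that force these overlaps, and I expect the analysis to split into subcases based on the position of $C^\star$ within the $\chi$-chain, in particular whether the $\chi$-chain re-emerges on a second boundary segment on the far side of the interior region containing $C^\star$.
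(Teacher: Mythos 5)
Your overall strategy---peeling off the Type \texttt{I} inequality of a $(-1,-1)$-curve on the $\chi$-chain, the inequalities of the curves unlocked by $C$, and subsheaf inequalities for the leftover divisor characters---is the same as the paper's. However, two of the steps you rely on do not hold as stated. Your justification that all coefficients lie in $\{0,1\}$ rests on the claim that a character of the form $\chi^{\otimes 2}$ can mark a subvariety only at a trivalent vertex. That is false: $\chi^{\otimes 2}$ is just a character of $G$, and Reid's recipe may assign it to curves or divisors having nothing to do with a trivalent vertex. In Example~\ref{ex:30walls}, $\chi_6$ marks curves while $\chi_{12}=\chi_6^{\otimes 2}$ also marks curves, and the inequality (\ref{30:a6}) has coefficients as large as $4$. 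Lemma~\ref{lem:sq} requires you to rule out $\chi^{\otimes 2}$ being \emph{unlocked} by $C$, which your argument does not do; note also that Lemma~\ref{lem:coeff} constrains only necessary inequalities, not the redundant ones you are trying to decompose. The paper's proof sidesteps this entirely by writing the inequality for $C$ with arbitrary nonnegative multiplicities $\alpha_\rho$, $\beta^i_\rho$ and using Lemma~\ref{lem:mult} only to show that the differences $\alpha_\rho-1$ and $\beta^i_\rho-\beta^i_{\chi_i}$ stay nonnegative after subtraction.

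The second gap is that the step you defer as ``combinatorial bookkeeping'' is precisely where the proof lives. The paper takes $C_0$ to be the \emph{first} $(-1,-1)$-curve inward from $C$ (the choice matters, since $\chi_{\text{dP}}(\,\cdot\,,D)$ can select different characters for curves on opposite sides of a del Pezzo divisor, so ``one character from each divisor along the chain'' need not contribute identically for an arbitrary $C^\star$), and then runs an explicit terminating reduction: whenever a leftover character $\rho_1$ with positive coefficient marks a curve $C_{i,1}$ unlocked further downstream, subtract $\big(\beta^i_{\rho_1}-\beta^i_{\chi_i}\big)\theta\big(\ph_{\mfk{C}_0}(\mO_{C_{i,1}})\big)$, check via Lemma~\ref{lem:mult} that all new coefficients remain nonnegative, and observe that the number of nonzero coefficients strictly decreases, so the process ends with only divisor-marking characters, which are absorbed by Lemma~\ref{lem:ci5}. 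Without supplying this mechanism, your identity is an ansatz rather than a proof that a nonnegative decomposition exists.
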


\begin{proof} Suppose $C$ is marked with character $\chi$. Let $C_0$ be the first $(-1,-1)$-curve in~the $\chi$-chain moving inwards from $C$. Then the $G$-igsaw piece for~$C$ consists of exactly the characters in~the $G$-igsaw piece for~$C_0$ along with the characters in~the $G$-igsaw pieces for any curves $C_1,\dots,C_n$ unlocked by $C$ at Hirzebruch divisors before $C_0$. Let the character marking $C_i$ be $\chi_i$. The inequality for~$C$ decomposes as
\begin{gather} \label{eqn:wc0} 
\theta\big(\ph_{\mfk{C}_0}(\mO_C)\big)=\sum_{\rho\in\Gig(C_0)}\alpha_\rho\theta(\rho)
+\sum_{i=1}^n\sum_{\rho\in\Gig(C_i)}\beta_\rho^i\theta(\rho),
\end{gather}
where $\alpha_\rho$ and $\beta_\rho^i$ are nonnegative multiplicities given by the appropriate calculation of $\deg (\mR_\rho|_?)$, possibly computing the degree of $\mR_\rho$ on multiple curves. Note that $\alpha_\chi=1$. One can thus write
\begin{gather*}
\theta\big(\ph_{\mfk{C}_0}(\mO_C)\big)=\theta\big(\ph_{\mfk{C}_0}(\mO_{C_0})\big)
\\ \hphantom{\theta(\ph_{\mfk{C}_0}(\mO_C))=}
+\!\!\sum_{\rho\in\Gig(C_0)}\!\!(\alpha_\rho-1)\theta(\rho) +\sum_{i=1}^m\Bigg(\beta_{\chi_i}^i\theta\big(\ph_{\mfk{C}_0}(\mO_{C_i})\big)
+\!\!\sum_{\rho\in\Gig(C_i)}\!\!(\beta_\rho^i-\beta_{\chi_i}^i)\theta(\rho)\!\Bigg).
\end{gather*}
From Lemma~\ref{lem:mult}, $\alpha_\rho-1$ and $\beta_\rho-\beta_{\chi'}$ are both nonnegative. If all the remaining $\rho$ in~these sums with nonzero coefficients after this reduction are characters marking divisors then one can express each term $\gamma_\rho\theta(\rho)=\gamma_\rho\theta\big(\ph_{\mfk{C}_0}\big(\mR_\psi^{-1}|_D\big)\big)$ for some divisor $D$, thus evidencing that~(\ref{eqn:wc0}) is redundant. Suppose instead that some $\rho=\rho_1$ marks a~curve unlocked by $C_0$ or some $C_i$. We~assume the latter; the former is treated identically. Denote this new curve by $C_{i,1}$. Then
\begin{gather*}
\sum_{\rho\in\Gig(C_i)}\big(\beta_\rho^i-\beta_{\chi_i}^i\big)\theta(\rho)
=\big(\beta_{\rho_1}^i-\beta_{\chi_i}^i\big)\theta\big(\ph_{\mfk{C}_0}(\mO_{C_{i,1}})\big)
\\ \hphantom{\sum_{\rho\in\Gig(C_i)}\big(\beta_\rho^i-\beta_{\chi_i}^i\big)\theta(\rho)=}
{}+\sum_{\rho\in\Gig(C_{i,1})}\big(\beta_\rho^i-\beta_{\rho_1}^i\big)\theta(\rho)
+\sum_{\rho\notin\Gig(C_{i,1})}\big(\beta_\rho^i-\beta_{\chi_i}^i\big)\theta(\rho),
\end{gather*}
where again each coefficient is nonnegative by Lemma~\ref{lem:mult} applied to $C_{i,1}$. Observe that there are strictly fewer nonzero coefficients in~this expression than before, since at the least we removed the term for~$\rho_1$. Continuing in~this way for each character appearing that marks a~curve, we can reduce to the situation where the only characters with nonzero coefficients in~the error term are those that mark divisors. At that point we have already seen how to express the error term in~terms of inequalities coming from divisors, and so we have shown that (\ref{eqn:wc0}) is redundant.
\end{proof}

\subsection[Classifying Type III walls]{Classifying Type \texttt{III} walls}
We provide a~combinatorial classification of the Type \texttt{III} walls for~$\mfk{C}_0$. We~start with the following definition.

\begin{Definition} \label{def:gls} Let $\chi$ be a~character marking a~curve in~$G$-Hilb. We~say that the $\chi$-chain is a~\textit{generalised long side} if it starts and ends on the boundary of the junior simplex, and all the edges along the $\chi$-chain are boundary edges of regular triangles. We~exclude the lines meeting at a~trivalent vertex if there is a~meeting of champions of side length $0$ from this definition.
\end{Definition}

For example, any long side is a~generalised long side. The $15$-chain for~$\frac{1}{35}(1,3,31)$ is a~ge\-neralised long side as can be seen in~Fig.~\ref{fig:1/35}.

\begin{Example} We compute the inequalities for curves along the $15$-chain in~$G$-Hilb for~$G=\frac{1}{35}(1,3,31)$. From the unlocking procedure or computing $G$-igsaw pieces directly, the inequalities for the $15$-curves starting from $e_1$ and moving downwards are
\begin{gather}
\label{35:a15} \tag{$A_{15}$} \theta_{15}+\theta_{18}+\theta_{21}+\theta_{24}+\theta_7+\theta_{10}+\theta_{13}+\theta_{16}
+\theta_{11}+\theta_{14}+\theta_{17}+\theta_{20}>0 ,
\\[-.3ex]
\label{35:b15} \tag{$B_{15}$} \theta_{15}+\theta_{18}+\theta_{21}+\theta_{16}+\theta_{11}+\theta_{14}+\theta_{17}>0 ,
\\[-.3ex]
\label{35:c15} \tag{$C_{15}$} \theta_{15}+\theta_{16}+\theta_{17}+\theta_{18}>0 ,
\\[-.3ex]
\label{35:d15} \tag{$D_{15}$} \theta_{15}+\theta_{16}+\theta_{17}+\theta_{18}>0.
\end{gather}
Clearly (\ref{35:c15}) and (\ref{35:d15}) depend on each other; the inequality is the same since they are fibres of the $\pr^1$-bundle structure on the Hirzebruch surface marked with $18$, and so contracting one must contract the other. We~consider some of the additional inequalities coming from $(-1,-1)$-curves:
\begin{gather}
\label{35:a7} \tag{$A_7$} \theta_7+\theta_{10}+\theta_{13}>0, \\[-.3ex]
\label{35:a11} \tag{$A_{11}$} \theta_{11}+\theta_{14}>0, \\[-.3ex]
\label{35:a21} \tag{$A_{21}$} \theta_{21}>0, \\[-.3ex]
\label{35:a24} \tag{$A_{24}$} \theta_{24}+\theta_{20}>0.
\end{gather}
We can deduce
\begin{gather*}
\text{(\ref{35:c15})}+\text{(\ref{35:a7})}+\text{(\ref{35:a11})}
+\text{(\ref{35:a21})}+\text{(\ref{35:a24})}\Longrightarrow\text{(\ref{35:a15})},
\\
\text{(\ref{35:c15})}+\text{(\ref{35:a11})}+\text{(\ref{35:a21})}\Longrightarrow\text{(\ref{35:b15})},
\end{gather*}
so that (\ref{35:a15}) and (\ref{35:b15}) are redundant.
\end{Example}

\begin{Definition} Consider a~generalised long side marked with character $\chi$. Recall that each $\chi$-chain consists of potentially several straight line segments. We~call a~curve in~the $\chi$-chain \textit{final} if it is the furthest curve along the $\chi$-chain away from a~vertex along such a~line segment.
\end{Definition}

For example, for~$G=\frac{1}{35}(1,3,31)$, the bolded curves in~Fig.~\ref{fig:final} are final.

\begin{figure}[h]\centering
\begin{tikzpicture}[scale=0.4]
\node (e1) at (0,9){$\bullet$};
\node (e2) at (10,-6){$\bullet$};
\node (e3) at (-10,-6){$\bullet$};

\draw (e1.center) to (e2.center) to (e3.center) to (e1.center);

\node (a1) at (-10+2,-6+1/2){$\bullet$};
\node (a2) at (-10+4,-6+2/2){$\bullet$};
\node (a3) at (-10+6,-6+3/2){$\bullet$};
\node (a4) at (-10+8,-6+4/2){$\bullet$};
\node (a5) at (-10+10,-6+5/2){$\bullet$};
\node (a6) at (-10+12,-6+6/2){$\bullet$};
\node (a7) at (-10+14,-6+7/2){$\bullet$};
\node (a8) at (6,-2){$\bullet$};

\node (b1) at (2,9-11/3){$\bullet$};
\node (b2) at (4,9-22/3){$\bullet$};

\node (c1) at (-10+6-2,-6+3/2+11/3){$\bullet$};
\node (c2) at (-10+8-2,-6+4/2+11/3){$\bullet$};
\node (c3) at (-10+10-2,-6+5/2+11/3){$\bullet$};
\node (c4) at (-10+12-2,-6+6/2+11/3){$\bullet$};
\node (c5) at (-10+14-2,-6+7/2+11/3){$\bullet$};

\node (d1) at (2-4,9-2/2-11/3){$\bullet$};
\node (d2) at (2-2,9-1/2-11/3){$\bullet$};

\draw(e2.center) to (a1.center);
\draw[line width = 1.2pt](e2.center) to (a2.center);
\draw(e2.center) to (a3.center);
\draw(e2.center) to (a4.center);
\draw[line width = 1.2pt](e2.center) to (a5.center);
\draw(e2.center) to (a6.center);
\draw(e2.center) to (a7.center);
\draw(e2.center) to (a8.center);

\draw(e2.center) to (b1.center);
\draw(e2.center) to (b2.center);

\draw(e3.center) to (c1.center) to (d1.center);

\draw(e1.center) to (c1.center);
\draw(e1.center) to (d1.center) to (c2.center);
\draw[line width = 1.2pt] (c2.center) to (a2.center);
\draw(e1.center) to (d2.center) to (c4.center);
\draw[line width = 1.2pt] (c4.center) to (a5.center);

\draw (e3.center) to (a1.center) to (a2.center) to (a3.center) to (a4.center) to (a5.center) to (a6.center) to (a7.center) to (a8.center);

\draw (e1.center) to (b1.center) to (b2.center) to (a8.center);

\draw (a1.center) to (c1.center);
\draw (a1.center) to (c2.center);
\draw (c1.center) to (c2.center);

\draw (d1.center) to (d2.center);
\draw (a3.center) to (c2.center);
\draw (a4.center) to (c4.center);
\draw (d1.center) to (c3.center) to (a4.center);
\draw (c2.center) to (c3.center) to (c4.center);
\draw (a3.center) to (c3.center) to (d2.center);

\draw (d2.center) to (b1.center);
\draw (a7.center) to (b2.center);
\draw (a6.center) to (c4.center);
\draw (a6.center) to (c5.center) to (b1.center);
\draw (c4.center) to (c5.center) to (b2.center);
\draw (a7.center) to (c5.center) to (d2.center);
\end{tikzpicture}
\caption{Final curves for~$G=\frac{1}{35}(1,3,31)$.}\label{fig:final}
\end{figure}
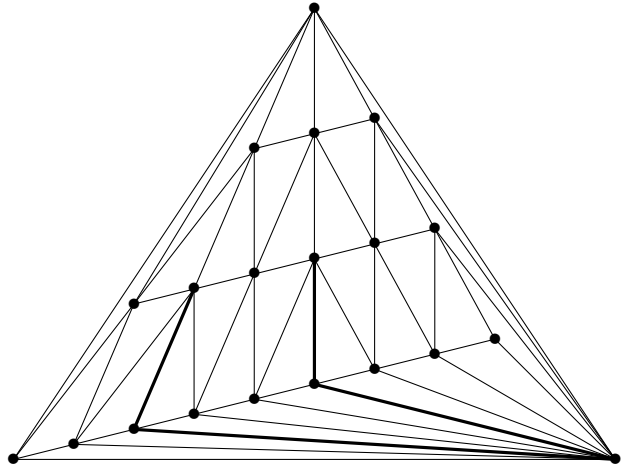

Final curves not along a~long side are also those contained in~an exceptional Hirzebruch surface (with no blowups) or, equivalently, those corresponding to edges incident to a~$4$-valent vertex. There can be at most two final curves for each generalised long side, with exactly one when the generalised long side is actually a~long side.

\begin{Lemma} \label{lem:bdry2} Suppose $\chi$ is a~character marking a~curve and that the $\chi$-chain is a~generalised long side. Then, the inequality for each non-final curve $C$ in~the $\chi$-chain is redundant. The final curves all produce the same inequality:
\[
\theta(\chi)+\sum_{\psi\in\Hirz(\chi)}\theta(\psi)>0,
\]
which is a~necessary inequality defining a~Type \texttt{III} wall of $\mfk{C}_0$.
\end{Lemma}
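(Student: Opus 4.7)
The strategy is to run Algorithm \ref{alg:up} on every $\chi$-curve in the chain, pin down the precise characters contributed by a final versus a non-final curve, and then decompose each non-final inequality as a positive combination of the final inequality together with curve inequalities for the unlocked downstream curves, in the spirit of Lemma \ref{lem:bdry}.

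First I would show that $\Gig(C) = \{\chi\} \cup \Hirz(\chi)$ for any final curve $C$. Since the $\chi$-chain is a generalised long side it lies entirely on boundary edges, so $\dP(\chi) = \emptyset$ and step \textbf{H1} of the unlocking procedure contributes exactly the characters marking the Hirzebruch divisors along the chain. By Definition \ref{def:bdry_downstream}, a Hirzebruch divisor $D$ along the chain is downstream of $C$ only when its vertex lies in the first regular triangle adjacent to $C$ or on a segment before the next slope-change vertex $v_C'$, and incident curves on other chains unlock only if their own chains break at $D$. The definition of final curve is arranged precisely so that every such $D$ reached from $C$ is on a straight continuation of the chain, so step \textbf{H2} contributes nothing. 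Combined with $\deg(\mR_\rho|_C) = 1$ from Proposition \ref{prop:curves} and Lemma \ref{lem:coeff}, this yields the stated inequality, identical for every final curve along the chain.

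For a non-final $\chi$-curve $C$, step \textbf{Re} produces nontrivial downstream curves $E_1, \dots, E_n$ at the break vertices reachable along the chain, each contributing its own $\Gig(E_i)$. Since the relevant multiplicities remain $1$, the inequality for $C$ decomposes as
\[
\theta\big(\ph_{\mfk{C}_0}(\mO_C)\big) = \Big(\theta(\chi) + \sum_{\psi \in \Hirz(\chi)} \theta(\psi)\Big) + \sum_{i=1}^n \theta\big(\ph_{\mfk{C}_0}(\mO_{E_i})\big),
\]
exhibiting the non-final inequality as a positive sum of the final inequality and the inequalities for the $E_i$, hence redundant.

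Necessity of the final inequality follows because $\chi$ marks only a curve, so $\theta(\chi)$ cannot appear in any subsheaf or quotient inequality (Lemma \ref{lem:ci5}, Proposition \ref{prop:rigid}), nor in a curve inequality for any curve outside the $\chi$-chain. Any positive representation of the final inequality must therefore use a $\chi$-curve inequality, but every non-final such inequality carries extra characters beyond $\{\chi\} \cup \Hirz(\chi)$ that cannot be cancelled positively. The wall is of Type \texttt{III} since the final curve is a $(-2,0)$-curve — a fibre of the ruling on the Hirzebruch divisor at the adjacent break vertex — and the associated contraction collapses the chain of Hirzebruch divisors marked by $\Hirz(\chi)$ to a curve, matching the classification recalled in Section \ref{sec:rr}. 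The main obstacle is the case analysis hidden in Definition \ref{def:bdry_downstream}: confirming that at each interior vertex through which the chain passes straight no curves unlock, while at each slope-change vertex exactly the appropriate broken chains unlock with multiplicity $1$ so that the telescoping decomposition above is exact.
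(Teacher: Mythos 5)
Your overall strategy matches the paper's: compute $\Gig(C)=\{\chi\}\cup\Hirz(\chi)$ for a final curve, decompose each non-final inequality against it, and argue necessity separately. But two steps fail as written. First, your claim that for a non-final curve ``the relevant multiplicities remain $1$'', so that its inequality telescopes exactly as the final inequality plus $\sum_i\theta\big(\ph_{\mfk{C}_0}(\mO_{E_i})\big)$, is false: the $6$-chain in Example~\ref{ex:30walls} is a generalised long side, the final curve gives (\ref{30:f6}) with all coefficients $1$, but the non-final curves give (\ref{30:a6})--(\ref{30:e6}) with coefficients as large as $4$. The paper therefore writes the non-final inequality with general coefficients $\alpha_\psi\geq 1$ and $\beta^i_\rho$, peels off one copy of the final inequality, absorbs the excess $(\alpha_\psi-1)\theta(\psi)$ using the subsheaf inequalities $\theta(\psi)>0$ of Lemma~\ref{lem:ci5}, and disposes of the downstream terms by the recursive reduction from the proof of Lemma~\ref{lem:bdry}. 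Your exact telescoping skips all of this and does not survive the example.

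Second, your justification that the final curve's coefficients all equal $1$ is circular: Lemma~\ref{lem:coeff} applies only to inequalities already known to be Type \texttt{I} or \texttt{III} walls, and the first formula of Proposition~\ref{prop:curves} gives $\deg(\mR_\rho|_C)$ without asserting it equals $1$ (a final curve here is a $(0,-2)$-curve, not a $(-1,-1)$-curve, so the second formula does not apply). The paper's argument is that $\chi^{\otimes 2}$ cannot mark a Hirzebruch divisor along the $\chi$-chain --- that would force another chain marked $\rho$ with $\chi\otimes\rho=\chi^{\otimes 2}$, i.e.\ $\rho=\chi$, to cross the $\chi$-chain, which is impossible --- after which Lemma~\ref{lem:sq} gives all coefficients in $\{0,1\}$. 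A smaller slip in your necessity argument: $\theta(\chi)$ \emph{can} appear in a quotient inequality, since Proposition~\ref{prop:rigid} sums over $\Gig(C)$ for all curves $C\subset D'$, including any $\chi$-curves in $D'$; the correct point, as in the paper, is that no divisor contains only $\chi$-curves, so any such quotient inequality carries characters outside $\{\chi\}\cup\Hirz(\chi)$ and cannot be a summand of the final inequality.
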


\begin{proof} First, the inequality for a~final $\chi$-curve $C$ features only the Hirzebruch divisors along the $\chi$-chain by the unlocking procedure. It~has all nonzero coefficients equal to $1$ for the following reason. $\chi^2$ cannot mark a~Hirzebruch divisor along the $\chi$-chain because to do so one would require another chain, say with character $\rho$, to cross the $\chi$-chain and have $\chi\otimes\rho=\chi^2$. Of course, this would mean that $\rho=\chi$, but chains do not self-intersect. Hence, $\chi^2$ does not appear in~the $G$-igsaw piece for~$C$ and so all multiplicities must be equal to $1$ by Lemma~\ref{lem:sq}. This is clearly a~necessary inequality, as $\chi$ is the only character in~the inequality coming from a~curve and there is no divisor that contains only $\chi$-curves~-- in~contrast to the case of a~trivalent vertex.

To see that the other inequalities coming from curves along a~generalised long side are redundant, we will decompose these inequalities similarly to before. Let $C$ be such a~curve and write
\[
\theta\big(\ph_{\mfk{C}_0}(\mO_C)\big)=\theta(\chi)+\sum_{\psi\in\Hirz(C)}\alpha_\psi\theta(\psi)
+\sum_{i=1}^n\sum_{\rho\in\Gig(C_i)}\beta^i_\rho\theta(\rho),
\]
where $C_1,\dots,C_n$ are the curves unlocked by $C$. By exactly the same methods as in~the proof of Lemma~\ref{lem:bdry}, one can express the final term as a~sum of inequalities from curves and divisors. The first two terms are equal to
\[
\theta(\chi)+\sum_{\psi\in\Hirz(C)}\alpha_\psi\theta(\psi)
=\theta\big(\ph_{\mfk{C}_0}(\mO_{C'})\big)+\sum_{\psi\in\Hirz(\chi)}(\alpha_\psi-1)
\theta\big(\ph_{\mfk{C}_0}\big(\mR_\psi^{-1}|_{D_\psi}\big)\big),
\]
where $C'$ is a~final $\chi$-curve and $D_\psi$ is the divisor marked with $\psi$. Of course $\alpha_\psi\geq1$ and so we have shown that the inequality from $C$ is redundant.
\end{proof}

We consider the example $G=\frac{1}{25}(1,3,21)$, which has a~meeting of champions of side length~$2$.

\begin{Example} We show the triangulation for~$G$-Hilb and Reid's recipe for~$G=\frac{1}{25}(1,3,21)$ in~Fig.~\ref{fig:25rr}. Observe that of the three champions, the $3$-chain and $9$-chain are generalised long sides whilst the $1$-chain contains a~$(-1,-1)$-curve. We~hence obtain two Type \texttt{III} walls from the champions and another for the $21$-chain that is also a~generalised long side, with inequalities
\begin{gather}
\tag{$F_3$} \theta_3+\theta_4+\theta_8+\theta_{12}+\theta_{16}+\theta_{20}+\theta_{24}>0,
\\
\tag{$C_9$} \theta_9+\theta_{10}+\theta_{11}+\theta_{12}>0,
\\
\tag{$C_{21}$} \theta_{21}+\theta_{22}+\theta_{23}+\theta_{24}>0.
\end{gather}

\begin{figure}[h]\centering
\begin{tikzpicture}[scale=0.6]
\node (e1) at (0,9){$\bullet$};
\node (e2) at (10,-6){$\bullet$};
\node (e3) at (-10,-6){$\bullet$};

\draw (e1.center) to (e2.center) to (e3.center) to (e1.center);

\footnotesize

\node[fill=white,draw,inner sep=2pt] (a1) at (-10+1*14/5,-6+1*3/5){$24$};
\node[fill=white,draw,inner sep=2pt] (a2) at (-10+2*14/5,-6+2*3/5){$20$};
\node[fill=white,draw,inner sep=2pt] (a3) at (-10+3*14/5,-6+3*3/5){$16$};
\node[fill=white,draw,inner sep=2pt] (a4) at (-10+4*14/5,-6+4*3/5){$12$};
\node[fill=white,draw,inner sep=2pt] (a5) at (-10+5*14/5,-6+5*3/5){$8$};
\node[fill=white,draw,inner sep=2pt] (a6) at (-10+6*14/5,-6+6*3/5){$4$};

\node[fill=white,draw,inner sep=2pt] (a12) at (-10+5*14/5-2/5,-6+5*3/5+21/5){$7$};

\node[fill=white,draw,inner sep=2pt] (a11) at (-10+4*14/5-2/5,-6+4*3/5+21/5){$11$};
\node[fill=white,draw,inner sep=2pt] (a18) at (-10+4*14/5-2*2/5,-6+4*3/5+2*21/5){$10$};

\node[fill=white,draw,inner sep=2pt] (a9) at (-10+2*14/5-2/5,-6+2*3/5+21/5){$23$};
\node[fill=white,draw,inner sep=2pt] (a10) at (-10+3*14/5-2/5,-6+3*3/5+21/5){$\begin{matrix} 14 \\ 19 \end{matrix}$};

\node[fill=white,draw,inner sep=2pt] (a17) at (-10+3*14/5-2*2/5,-6+3*3/5+2*21/5){$22$};

\draw(e1.center) to node[fill=white,inner sep=2pt] {$9$} (a18) to node[fill=white,inner sep=2pt] {$9$} (a11) to node[fill=white,inner sep=2pt] {$9$} (a4);
\draw(e2.center) to node[fill=white,inner sep=1pt] {$1$} (a6) to node[fill=white,inner sep=2pt] {$1$} (a12) to node[fill=white,inner sep=2pt] {$1$} (a18);
\draw(e1.center) to node[fill=white,inner sep=2pt] {$21$} (a17) to node[fill=white,inner sep=2pt] {$21$} (a9) to node[fill=white,inner sep=2pt] {$21$} (a1);
\draw(e3.center) to node[fill=white,inner sep=1pt] {$3$} (a1) to node[fill=white,inner sep=2pt] {$3$} (a2) to node[fill=white,inner sep=2pt] {$3$} (a3) to node[fill=white,inner sep=2pt] {$3$} (a4) to node[fill=white,inner sep=2pt] {$3$} (a5) to node[fill=white,inner sep=2pt] {$3$} (a6);
\draw(e3.center) to node[fill=white,inner sep=2pt] {$2$} (a9);
\draw(e3.center) to node[fill=white,inner sep=1pt] {$1$} (a17);
\draw(e2.center) to node[fill=white,inner sep=2pt] {$21$} (a1);
\draw(e2.center) to node[fill=white,inner sep=2pt] {$17$} (a2);
\draw(e2.center) to node[fill=white,inner sep=2pt] {$13$} (a3);
\draw(e2.center) to node[fill=white,inner sep=2pt] {$9$} (a4);
\draw(e2.center) to node[fill=white,inner sep=2pt] {$5$} (a5);
\draw(e1.center) to node[fill=white,inner sep=2pt] {$6$} (a12);
\draw(e1.center) to node[fill=white,inner sep=1pt] {$3$} (a6);

\draw[dashed](a17) to node[fill=white,inner sep=2pt] {$13$} (a10) to node[fill=white,inner sep=2pt] {$13$} (a3);
\draw[dashed](a17) to node[fill=white,inner sep=2pt] {$1$} (a18);
\draw[dashed](a9) to node[fill=white,inner sep=2pt] {$2$} (a10) to node[fill=white,inner sep=2pt] {$2$} (a11);
\draw[dashed](a2) to node[fill=white,inner sep=2pt] {$18$} (a10) to node[fill=white,inner sep=2pt] {$18$} (a18);
\draw[dashed](a3) to node[fill=white,inner sep=2pt] {$15$} (a11);
\draw[dashed](a2) to node[fill=white,inner sep=2pt] {$17$} (a9);

\draw[dashed](a12) to node[fill=white,inner sep=2pt] {$6$} (a5);
\draw[dashed](a11) to node[fill=white,inner sep=2pt] {$2$} (a12);
\draw[dashed](a11) to node[fill=white,inner sep=2pt] {$5$} (a5);
\end{tikzpicture}
\caption{Reid's recipe for~$G=\frac{1}{25}(1,3,21)$.}\label{fig:25rr}
\end{figure}
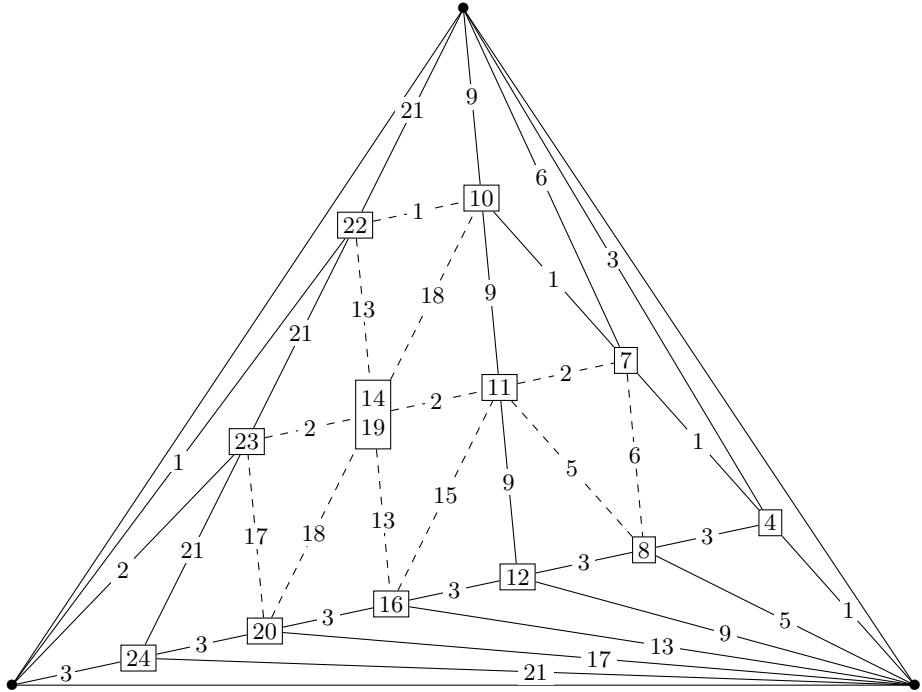
\end{Example}

\subsection{Summary} \label{sec:sum}

We compile the main results~-- Corollary~\ref{cor:ii}, Proposition~\ref{prop:-1nec}, Lemmas~\ref{lem:bdry} and~\ref{lem:bdry2}~-- of this section.

\begin{Theorem} \label{thm:main} Suppose $G\subset\on{SL}_3(\C)$ is a~finite abelian subgroup. The walls of the chamber $\mfk{C}_0$ for~$G\hilb\A^3$ and their types are as follows:
\begin{itemize}\itemsep=0pt
\item a~Type \texttt{I} wall for each exceptional $(-1,-1)$-curve,
\item a~Type \texttt{III} wall for each generalised long side,
\item a~Type \texttt{0} wall for each irreducible exceptional divisor,
\item each remaining wall is of Type \texttt{0} and comes from a~divisor parameterising a~rigid quotient.
\end{itemize}
Moreover, for every contraction of Type \texttt{I} or \texttt{III} for~$G\hilb\A^3$ there is a~wall of the correspon\-ding type that induces the contraction by VGIT.
\end{Theorem}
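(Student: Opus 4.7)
The plan is to assemble the theorem from the results already established in this section, treating each of the four bullet points as a separate combinatorial case. By Proposition~\ref{prop:curves} and Proposition~\ref{prop:rigid1}, every candidate inequality defining $\mfk{C}_0$ comes either from an exceptional curve $C$ (with a~coefficient pattern computed by the unlocking procedure) or from a~possibly reducible exceptional divisor $D'$ (as a~quotient inequality). These fall into three geometric families according to $\mathcal{N}_C$: $(-1,-1)$-curves, $(1,-3)$-curves, and $(-2,0)$-curves (which are precisely the boundary curves), together with the subsheaf inequalities of Lemma~\ref{lem:ci5} and the quotient inequalities of Proposition~\ref{prop:rigid}. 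I~would handle each family in turn.

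First, Proposition~\ref{prop:-1nec} shows that every $(-1,-1)$-curve gives a~necessary inequality, which by Proposition~\ref{prop:i} defines a~Type \texttt{I} wall; this yields the first bullet. Second, Corollary~\ref{cor:ii} rules out Type~\texttt{II} walls, so the inequalities attached to $(1,-3)$-curves are always redundant and contribute nothing. Third, for boundary $(-2,0)$-curves I~would split into two cases depending on whether the corresponding $\chi$-chain contains a~$(-1,-1)$-curve: Lemma~\ref{lem:bdry} shows the inequality is redundant in that case, while Lemma~\ref{lem:bdry2} shows that otherwise the $\chi$-chain is a~generalised long side, all its final curves produce the same necessary inequality $\theta(\chi)+\sum_{\psi\in\Hirz(\chi)}\theta(\psi)>0$, and this defines a~Type \texttt{III} wall. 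This gives the second bullet. Fourth, Lemma~\ref{lem:ci5} supplies the Type~\texttt{0} wall $\theta(\psi)>0$ for each irreducible exceptional divisor marked by $\psi$, giving the third bullet; the remaining quotient inequalities from Proposition~\ref{prop:rigid} are precisely the remaining walls, accounting for the fourth bullet. The surviving quotient inequalities are those not expressible as a~positive combination of subsheaf inequalities together with previously recorded curve inequalities, and one reconstructs $D'$ combinatorially from the set of curves whose $\Gig(C)$ characters appear with nonzero coefficient.

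Finally, for the ``moreover'' clause, one uses the explicit wall inequalities together with the general VGIT correspondence of \cite{CrawIshii04}: crossing a~Type~\texttt{I} wall of the form $\sum_{\chi\in\Gig(C)}\theta(\chi)=0$ performs the Atiyah flop in the curve $C$, and crossing the Type~\texttt{III} wall $\theta(\chi)+\sum_{\psi\in\Hirz(\chi)}\theta(\psi)=0$ from a~generalised long side contracts the chain of Hirzebruch divisors it cuts through to a~curve. Since Proposition~\ref{prop:-1nec} and Lemma~\ref{lem:bdry2} exhaust all $(-1,-1)$-curves and all boundary curves producing divisorial contractions, every such contraction of $G\hilb\A^3$ is realised by a~wall-crossing from $\mfk{C}_0$.

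The main obstacle is the bookkeeping around the fourth bullet: one must verify that after removing the redundant curve inequalities (Corollary~\ref{cor:ii}, Lemmas~\ref{lem:bdry} and~\ref{lem:bdry2}) and the redundancies among quotient inequalities themselves, the list above is both complete and minimal. Examples~\ref{ex:6walls} and~\ref{ex:30walls} are meant to illustrate how the remaining quotient inequalities are detected; a~careful general argument requires showing that a~quotient inequality for $D'$ is necessary precisely when no proper sub-union of the curve inequalities for $C\subset D'$ together with subsheaf inequalities dominates it, and that in this case $D'$ is reconstructible from the supports appearing in $\bigcup_{C\subset D'}\Gig(C)$. Everything else reduces to quoting results already proved.
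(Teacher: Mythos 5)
Your assembly is essentially the paper's own proof: Theorem~\ref{thm:main} is stated there as a compilation of Corollary~\ref{cor:ii}, Proposition~\ref{prop:-1nec}, Lemmas~\ref{lem:bdry} and~\ref{lem:bdry2}, together with Lemma~\ref{lem:ci5} and Proposition~\ref{prop:rigid}, exactly as you quote them. One small case slips through your trichotomy for curves: when there is a meeting of champions of side length $0$, a boundary curve along one of the three champions has a $\chi$-chain with no $(-1,-1)$-curve that is nevertheless \emph{excluded} from Definition~\ref{def:gls}, and such a curve is neither a $(1,-3)$-curve nor covered by Lemma~\ref{lem:bdry} or~\ref{lem:bdry2}; the paper disposes of these via the corollary following Lemma~\ref{lem:coeff} (redundancy of all inequalities from curves along the champions), and you should cite that to close the case analysis.
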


Proposition~\ref{prop:rigid} describes how to recover the unstable locus or the corresponding reducible divisor $D'$ for each wall of Type \texttt{0} from a~rigid quotient. Let $\mfk{w}$ be a~wall of $\mfk{C}_0$. Denote by~$E(\mfk{w})$ the set of edges in~the Craw--Reid triangulation corresponding to curves $C$ for which all characters in~$\Gig(C)$ appear in~the equation of the wall. The desired divisor $D'$ inducing~$\mfk{w}$ is then the union of the divisors corresponding to vertices for which all incident edges are in~$E(\mfk{w})$. We~observe that the unlocking procedure allows the check of which walls from rigid quotients are necessary to be performed combinatorially.

\section{Future directions}

There are several natural avenues of further study opened up by the results of this paper, three of which are
\begin{itemize}\itemsep=0pt
\item attuning the results here with the derived interpretation of Reid's recipe~\cite{CautisCrawLogvinenko17},
\item exploring any relations between analogs to Reid's recipe in~other settings and walls in~stability (for instance, dimer models~\cite{BocklandtCrawVelez15,IshiiUeda16}),
\item reverse-engineering a~partial Reid's recipe for other resolutions $\M_\mfk{C}$ from an explicit des\-crip\-tion of the walls of $\mfk{C}$, and examining whether this has any categorical content.
\end{itemize}

\subsection*{Acknowledgements}

The author would like to thank Yukari Ito and Nagoya University for hosting him as this research began. He would also like to thank Alastair Craw, \'Alvaro Nolla de Celis, and David Nadler for many fruitful and enjoyable conversations about this project, as well as the referees for their thoughtful suggestions on how to improve its exposition.

\pdfbookmark[1]{References}{ref}
\LastPageEnding

\end{document}